\newcommand\dC{\mathbb{C}}
\newcommand\dN{\mathbb{N}}
\newcommand\dR{\mathbb{R}}
\newcommand\dZ{\mathbb{Z}}
\newcommand\cA{\mathcal{A}}
\newcommand\cB{\mathcal{B}}
\newcommand\cC{\mathcal{C}}
\newcommand\cD{\mathcal{D}}
\newcommand\cE{\mathcal{E}}
\newcommand\cF{\mathcal{F}}
\newcommand\cG{\mathcal{G}}
\newcommand\cH{\mathcal{H}}
\newcommand\cJ{\mathcal{J}}
\newcommand\cK{\mathcal{K}}
\newcommand\cN{\mathcal{N}}
\newcommand\cO{\mathcal{O}}
\newcommand\cP{\mathcal{P}}
\newcommand\cQ{\mathcal{Q}}
\newcommand\cR{\mathcal{R}}
\newcommand\cS{\mathcal{S}}
\newcommand\cT{\mathcal{T}}
\newcommand\cU{\mathcal{U}}
\newcommand\cV{\mathcal{V}}
\newcommand\epsi{\varepsilon}
\newcommand\vi{\varphi}
\newcommand\teta{\vartheta}
\DeclareMathOperator{\re}{Re}
\DeclareMathOperator{\im}{Im}
\DeclareMathOperator{\dens}{dens}
\DeclareMathOperator{\meas}{meas}
\DeclareMathOperator{\dist}{dist}
\DeclareMathOperator{\diam}{diam}
\DeclareMathOperator{\sgn}{sgn}
\DeclareMathOperator{\sing}{sing}
\DeclareMathOperator{\length}{length}
\newcommand\diff{\text{d}}
\newtheorem{thm}{Theorem}[section]
\newtheorem*{importthm}{Theorem}
\newtheorem{lemma}[thm]{Lemma}
\newtheorem{cor}[thm]{Corollary}
\theoremstyle{definition}
\newtheorem{remark}[thm]{Remark}
\numberwithin{equation}{section}
\begin{document}
\title{A class of Newton maps with Julia sets of Lebesgue measure zero}
\author{Mareike Wolff}
\date{}
\maketitle

\begin{abstract}
\noindent Let $g(z)=\int_0^zp(t)\exp(q(t))\,dt+c$ where $p,q$ are polynomials and $c\in\dC$, and let $f$ be the function from Newton's method for $g$. We show that under suitable assumptions the  Julia set of $f$ has Lebesgue measure zero. Together with a theorem by Bergweiler, our result implies that $f^n(z)$ converges to zeros of $g$ almost everywhere in $\dC$ if this is the case for each zero of $g''$. In order to prove our result, we establish general conditions ensuring that Julia sets have Lebesgue measure zero.\\

\noindent 2020 Mathematical Subject Classification: 37F10 (Primary), 30D05 (Secondary)
\end{abstract}


\section{Introduction and results}   \label{sec_intro}
For a rational function $f:\hat\dC\to\hat\dC$ that is not constant and not a Möbius transformation, or a transcendental meromorphic function $f:\dC\to\hat\dC$, let $f^n$ denote the $n$th iterate of $f$. The \textit{Fatou set}, $\cF(f)$, is the set of all $z$ such that all iterates $f^n$ are defined and form a normal family in a neighbourhood of $z$. Its complement, $\cJ(f)$, is called the \textit{Julia set}. For an introduction to the iteration theory of meromorphic functions, see, for example, \cite{steinmetz93} for rational functions and \cite{bergweiler93s} for transcendental functions.

Let $g$ be a non-constant meromorphic function. Newton's root finding method for $g$ consists of iterating the function
\begin{equation}   \label{eq_f}
f(z)=z-\frac{g(z)}{g'(z)}.
\end{equation}
We also call $f$ the \textit{Newton map} corresponding to $g$. The zeros of $g$ are precisely the attracting fixed points of $f$, and the simple zeros of $g$ are even superattracting fixed points. Recall that, more generally, a periodic point $z_0$ of period $p$ of a meromorphic function $f$ is called \textit{attracting}, \textit{indifferent} or \textit{repelling} depending on whether $|(f^p)'(z_0)|<1$, $|(f^p)'(z_0)|=1$ or $|(f^p)'(z_0)|>1$. The periodic point $z_0$  is called \textit{superattracting} if $(f^p)'(z_0)=0$. An indifferent periodic point $z_0$ is called \textit{rationally indifferent} if $(f^p)'(z_0)$ is a $q$th root of unity for some $q\in\dN$, otherwise it is called \textit{irrationally indifferent}.

We will investigate the Lebesgue measure of Julia sets of Newton maps corresponding to functions of the form
\begin{equation}  \label{eq_g}
g(z)=\int_0^z p(t)e^{q(t)}\,dt+c
\end{equation}
where $p$ is a polynomial with $p\not\equiv0$, $q$ is a non-constant polynomial and $c\in\dC.$ 

In the following, we will assume that $g$ is not of the form
\begin{equation}  \label{eq_notg}
g(z)=\tilde p(z)e^{\tilde q(z)}
\end{equation}
with polynomials $\tilde p$ and $\tilde q$. Then $g$ has infinitely many zeros and $f$ is transcendental. Newton's method for functions of the form \eqref{eq_notg} has been studied by Haruta \cite{haruta99}. 

Let $\dist(\cdot,\cdot)$ denote the Euclidean distance in $\dC$. We will prove the following result.

\begin{thm}   \label{thm_main}
Let $g$ be of the form \eqref{eq_g} but not of the form \eqref{eq_notg}, and let $f$ be the corresponding Newton map. Denote the zeros of $g''$ which are not zeros of $g$ or $g'$ by $z_1,...,z_N$. Suppose that  for all $j\in\{1,...,N\}$, the point $z_j$ is attracted by a periodic cycle, that is, there exists a periodic cycle $\cC$ of $f$ such that $\lim_{n\to\infty}\dist(f^n(z_j),\,\cC)=0$.  Then the Lebesgue measure of $\cJ(f)$ is zero.
\end{thm}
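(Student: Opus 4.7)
The plan is to deduce Theorem~\ref{thm_main} from the general criterion for Julia sets of Lebesgue measure zero which, as indicated in the abstract, is established earlier in the paper. Such a criterion should require that every singular value of $f$ has forward orbit eventually contained in $\cF(f)$, or at least at a definite distance from the relevant part of $\cJ(f)$. The proof then splits into two tasks: (i) enumerate the singular values of $f$, and (ii) verify that under the hypothesis each singular orbit satisfies the criterion.

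For the critical values, a short calculation from $g'(z)=p(z)e^{q(z)}$ and $g''(z)=(p'(z)+p(z)q'(z))e^{q(z)}$ gives
\[f'(z)=\frac{g(z)g''(z)}{g'(z)^2}.\]
The finite critical points of $f$ therefore split into the zeros of $g$ (attracting, and superattracting when simple, fixed points) and the zeros of $g''$ that are not zeros of $g'$; after removing those that are also zeros of $g$, exactly $z_1,\dots,z_N$ remain. By hypothesis each orbit $\{f^n(z_j)\}$ is attracted to a periodic cycle $\cC$. In each possible case ($\cC$ attracting or rationally indifferent) the orbit eventually enters $\cF(f)$, so the closure of each finite critical orbit meets $\cJ(f)$ at most at $\infty$.

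The asymptotic values of $f$ require more care. Partitioning $\dC$ according to $\arg q$, one obtains sectors $S_+$ where $\re q(z)\to+\infty$ and sectors $S_-$ where $\re q(z)\to-\infty$. In $S_+$, $g'$ grows super-exponentially and integration by parts yields $g(z)\sim p(z)e^{q(z)}/q'(z)$, hence $g/g'\sim 1/q'(z)\to0$ and $f(z)\sim z$. In $S_-$, $g'$ decays exponentially while $g$ tends to a finite asymptotic value of $g$, so (generically) $|g/g'|$ eventually dominates $|z|$ and $|f(z)|\to\infty$. Either way, the only asymptotic value of $f$ approached in the interior of a sector is $\infty$, and near $\infty$ the map $f$ is essentially the identity on the dominant tracts $S_+$. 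What still needs doing is ruling out finite asymptotic values along curves straddling sector boundaries, and verifying that the asymptotic tracts are regular enough (likely of logarithmic type) for the general criterion.

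Having accounted for both kinds of singular value and shown that their forward orbits are eventually in $\cF(f)$ (the finite ones by hypothesis, the asymptotic value $\infty$ via the near-identity behaviour on $S_+$), the general criterion yields $\meas\cJ(f)=0$. The main obstacle, in my view, is precisely this asymptotic analysis at infinity: one must show that the tracts are geometrically tame and that $\infty$ is a tame endpoint of the dynamics, so that the criterion actually applies. Identifying the critical orbits is, by comparison, routine once $f'$ has been written down.
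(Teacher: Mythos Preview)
Your proposal misidentifies what the general criterion actually demands, and as a result skips the part of the argument that occupies most of the paper.

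The criterion established in the paper (Theorem~\ref{thm_zero_measure}) does \emph{not} say that $\meas\cJ(f)=0$ whenever all singular orbits eventually lie in $\cF(f)$. It requires three things: that $\cP(f)\cap\cJ(f)$ be finite inside a large disk, that $\cJ(f)$ be \emph{thin at $\infty$} (uniform positive density of $\cF(f)$ in every disk of a fixed large radius), and that $\cJ(f)$ be \emph{uniformly thin at} the unbounded part of $\cP(f)$. The last two conditions are geometric statements about $\cJ(f)$, not about singular orbits, and they cannot be omitted: the paper itself recalls McMullen's example $f(z)=\sin(\alpha z)$, which is hyperbolic (so every singular orbit sits in $\cF(f)$) yet has $\meas\cJ(f)>0$ precisely because thinness at $\infty$ fails. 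So the scheme ``show singular orbits land in $\cF(f)$, then invoke the criterion'' cannot succeed on its own.

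What the paper actually does is spend Sections~\ref{sec_changeOfVar}--\ref{sec_q(F)_all} proving thinness. After the change of variables $w=q(z)$ and asymptotic expansions for $g$ and $f$, the plane is partitioned by the curves $\Gamma(\mu,\alpha)$, and in each of three regimes the conjugated map $h_j=q\circ f\circ\vi_j$ is analysed in detail to produce, in every large rectangle, a definite proportion of $q(\cF_j)$. This is where the real work lies, and your sector sketch (``$f(z)\sim z$ on $S_+$'', ``$|f(z)|\to\infty$ on $S_-$'') does not come close to supplying it.

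There are also two smaller errors in your account of the singular set. First, the hypothesis allows the attracting cycle $\cC$ to be repelling or irrationally indifferent (Cremer), not only attracting or parabolic; in those cases the orbit of $z_j$ need not enter $\cF(f)$ at all. The paper handles this via Lemma~\ref{lemma_convToCycle} (using P\'erez~Marco's theorem), concluding only that $\cP(f)\cap\cJ(f)$ is \emph{finite}, not empty. Second, $\cP(f)$ is unbounded: $g$ has infinitely many zeros, each a superattracting fixed point and hence a critical value of $f$. These lie in $\cF(f)$, but to verify condition~(iii) of the criterion one must show their immediate basins contain disks of a definite (scaled) size --- this is Lemma~\ref{lemma_disk_attr_basin}, proved via B\"ottcher coordinates and Koebe's theorem, and it is not automatic.
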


Jankowski \cite[§3]{jankowski96} proved that if $f$ is the Newton map corresponding to a function $g$ of the form $g(z)=r(z)e^{az}+b$ where $r$ is a rational function and $a,b\in\dC\setminus\{0\}$, and if for each of the zeros, $z_1,...,z_N$, of $g''$ that are not zeros of $g$ or $g'$, the iterates $f^n(z_j)$ converge to a finite limit as $n\to\infty$, then the Julia set of $f$ has Lebesgue measure zero. Note that if $r$ is a polynomial, then $g$ can be written in the form \eqref{eq_g} with $q(t)=at$ and $p(t)=ar(t)+r'(t)$. Also, under the assumptions of Jankowski's result, $f^n(z_j)$ is attracted by a cycle of period $1$ for all $j\in\{1,...,N\}$. So Jankowski's theorem for polynomial $r$ is a special case of Theorem \ref{thm_main}. The essential new difficulties we have to deal with in our proof come from the fact that we allow $q$ to have degree greater than one.

Bergweiler \cite[Theorem 3]{bergweiler93} also investigated Newton's method for functions of the form \eqref{eq_g}. He proved the following result.

\begin{importthm}[Bergweiler]  
Let $g$ be of the form \eqref{eq_g} but not of the form $e^{az+b}$ with $a,b\in\dC$, and let $f$ be the corresponding Newton map. Denote the zeros of $g''$ which are not zeros of $g$ or $g'$ by $z_1,...,z_N$. If $f^n(z_j)$ converges to a finite limit for all $j\in\{1,...,N\}$, then $f^n(z)$ converges to zeros of $g$ on an open dense subset of $\dC$. 
\end{importthm}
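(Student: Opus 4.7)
The plan is to combine a description of the critical and singular orbits of $f$ with the classification of periodic Fatou components, reducing the theorem to showing that every Fatou component is eventually captured by an attracting basin of a zero of $g$ and that the Fatou set is dense in $\dC$. First I would exploit the identity $f'(z)=g(z)g''(z)/g'(z)^2$. This shows that the critical points of $f$ in $\dC$ are precisely the zeros of $g$ together with $z_1,\dots,z_N$, and that the fixed points of $f$ in $\dC$ are exactly the zeros of $g$, each of which is (super-)attracting. Consequently any critical orbit of $f$ that converges to a finite limit must converge to a zero of $g$, so the hypothesis already guarantees that every finite critical orbit is attracted to a zero of $g$.

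Next I would analyse the asymptotic behaviour of $f$ at infinity in order to describe its asymptotic values. On the tracts of $q$ where $\re q(z)\to+\infty$, repeated integration by parts of $g(z)-c=\int_0^z p(t)e^{q(t)}\,dt$ yields that $g(z)/g'(z)=o(|z|)$, so that $f(z)=z-g(z)/g'(z)\to\infty$. On the anti-tracts where $\re q(z)\to-\infty$, $g$ has a finite asymptotic value $v$ while $g'\to 0$ exponentially; a separate calculation in the cases $v\neq 0$ (in which $g/g'\to\infty$ exponentially) and $v=0$ (in which $g/g'\to 0$) again gives $f\to\infty$. A somewhat more delicate analysis along the intermediate directions should yield that $f$ has no finite asymptotic values, so that the set of finite singular values of $f$ consists only of the critical values $f(z_j)$ and the zeros of $g$, all of whose forward orbits are controlled by the first step.

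I would then apply the classification of periodic Fatou components. Attracting and parabolic cycles each attract the orbit of at least one singular value, and the boundaries of Siegel discs and Herman rings are accumulated by singular orbits; combined with the preceding two steps, this forces every periodic cycle of Fatou components to be an immediate basin of some zero of $g$. The main technical point is the exclusion of Baker domains and wandering domains: Baker domains would produce a sequence of iterates escaping to $\infty$ within a Fatou component, which should be incompatible with the asymptotic behaviour of $f$ near infinity from step two; wandering domains are excluded by a Sullivan-type rigidity argument tailored to this class of meromorphic maps, where the restricted structure of the singular set from step two is essential.

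Once every Fatou component is shown to eventually enter the immediate basin of some zero of $g$, the open set $A=\{z\in\dC:f^n(z)\text{ converges to a zero of }g\}$ coincides with $\cF(f)$. Since $\cJ(f)$ is the closure of the countable set of repelling periodic points of $f$ and $\cJ(f)\neq\dC$ (because $A\neq\emptyset$), it has empty interior, so $A=\cF(f)$ is dense in $\dC$. The two steps that I expect to carry the bulk of the work, and that depend most on the precise form $g'=p\,e^{q}$, are the asymptotic analysis at infinity and the exclusion of Baker and wandering domains.
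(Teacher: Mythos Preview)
This theorem is not proved in the present paper at all: it is quoted as \cite[Theorem~3]{bergweiler93}, and the paper only summarises the argument in one sentence, namely that under the given assumptions $f$ has no wandering domains and no parabolic domains, Baker domains, Siegel discs or Herman rings. Your outline follows precisely this route, so there is nothing to compare beyond noting that the two agree. Two of the ingredients you isolate are in fact stated separately in the paper (again with reference to Bergweiler rather than with proof): that $f$ has no finite asymptotic values \cite[\S7]{bergweiler93}, and that any cycle of Baker domains of such an $f$ must contain a singularity of $f^{-1}$ \cite[Theorem~2]{bergweiler93}.

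One genuine slip in your write-up: the final density argument is wrong as stated. You argue that $\cJ(f)$ has empty interior because it is the closure of the countable set of repelling periodic points; but the closure of a countable set can of course have interior (the rationals in $\dR$). The correct reasoning is the standard dichotomy: either $\cJ(f)=\hat\dC$ or $\cJ(f)$ has empty interior, which follows from complete invariance together with the blowing-up property (Lemma~\ref{lemma_blowUp} in the paper, or its transcendental analogue). Since the zeros of $g$ are (super)attracting fixed points of $f$, the Fatou set is non-empty, and the dichotomy gives the desired density of $\cF(f)$.
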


It is not difficult to see that under the assumptions of Bergweiler's theorem, $f^n(z_j)$ converges to an attracting fixed point of $f$ and hence a zero of $g$ for all $j\in\{1,...,N\}$. So the Theorem says that $f^n(z)$ converges to zeros of $g$ on an open dense subset of $\dC$, provided this is the case for each zero $z$ of $g''$ that is not a zero of $g$ or $g'$.

A component $\cU$ of the Fatou set  $\cF(f)$ is called \textit{periodic} if there is $p\in\dN$ with $f^p(\cU)\subset \cU$, the component $\cU$ is called \textit{preperiodic} if there is $l\in\dN$ such that $f^l(\cU)$ is contained in a periodic Fatou component, and $\cU$ is called a \textit{wandering domain} if it is not (pre)periodic. It is known (see, e.g., \cite[§4]{bergweiler93s}) that if $\cU$ is a periodic Fatou component of period $p$ of $f$, then either $f^{np}|_\cU$ converges to an attracting periodic point in $\cU$ (\textit{immediate basin of attraction}),  $f^{np}|_\cU$ converges to a rationally indifferent periodic point in $\partial\cU$ (\textit{parabolic domain}),  $f^{np}|_\cU$ converges to some $z_0\in\partial\cU$ and $f^p(z_0)$ is not defined (\textit{Baker domain}), or $f^{np}|_\cU$ is conjugate to a rotation of a disk (\textit{Siegel disk}) or an annulus (\textit{Herman ring}).

Bergweiler's theorem is proved by showing that under the given assumptions, $f$ has neither wandering domains nor parabolic domains, Baker domains, Siegel disks or Herman rings.

The following corollary is a direct consequence of Theorem \ref{thm_main} and Bergweiler's theorem.

\begin{cor}  \label{cor_main}
Let $g$ be of the form \eqref{eq_g} but not of the form \eqref{eq_notg}, and let $f$ be the corresponding Newton map. Denote the zeros of $g''$ which are not zeros of $g$ or $g'$ by $z_1,...,z_N$. If $f^n(z_j)$ converges to a finite limit for all $j\in\{1,...,N\}$, then $f^n(z)$ converges to zeros of $g$ for almost all $z\in\dC$.
\end{cor}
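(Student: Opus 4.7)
The corollary combines Theorem~\ref{thm_main} with Bergweiler's theorem, so the plan is to verify their hypotheses and stitch the conclusions together. First I would check that the hypothesis of Theorem~\ref{thm_main} follows from the corollary's hypothesis. Set $w_j:=\lim_{n\to\infty}f^n(z_j)\in\dC$. Then $w_j$ cannot be a pole of $f$, for otherwise $f^{n+1}(z_j)=f(f^n(z_j))\to\infty$, contradicting $f^{n+1}(z_j)\to w_j$. Thus $f$ is continuous at $w_j$, and passing to the limit in $f(f^n(z_j))=f^{n+1}(z_j)$ yields $f(w_j)=w_j$. Therefore $\cC:=\{w_j\}$ is a periodic cycle of $f$ that attracts $z_j$, so Theorem~\ref{thm_main} applies and gives that $\cJ(f)$ has Lebesgue measure zero.

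Next I would apply Bergweiler's theorem, whose hypothesis is identical to that of the corollary. It provides an open dense set $V\subset\dC$ on which $f^n$ converges to zeros of $g$. Its proof moreover establishes that $f$ has no wandering domains, parabolic domains, Baker domains, Siegel disks, or Herman rings, so every Fatou component is eventually contained in the immediate basin of an attracting cycle of $f$. Now let $\cU$ be an arbitrary Fatou component; by density of $V$ pick $z_0\in V\cap\cU$. Since $f^n(z_0)$ converges to some zero $\zeta$ of $g$, which is an attracting fixed point of $f$, the attracting cycle into which $\cU$ is eventually mapped must be $\{\zeta\}$, and consequently $f^n(z)\to\zeta$ for every $z\in\cU$. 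In particular, $f^n(z)$ converges to a zero of $g$ for every $z\in\cF(f)$.

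Combining the two conclusions, $\cJ(f)$ has Lebesgue measure zero and on its complement $\cF(f)$ the iterates $f^n(z)$ converge to zeros of $g$, whence $f^n(z)$ converges to a zero of $g$ for almost every $z\in\dC$. I do not expect any genuine obstacle, since the heavy lifting is carried out by the two cited theorems. The only mildly delicate step is extending the convergence from Bergweiler's open dense set $V$, which a priori could have small Lebesgue measure, to all of $\cF(f)$; this extension relies on the classification of periodic Fatou components used inside the proof of Bergweiler's theorem together with the observation above that $V$ meets each Fatou component.
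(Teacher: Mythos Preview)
Your proposal is correct and matches the paper's approach: the paper simply declares the corollary a ``direct consequence of Theorem~\ref{thm_main} and Bergweiler's theorem'' and leaves the details to the reader, while you have spelled those details out accurately. In particular, the paper explicitly notes both ingredients you rely on---that the finite limit $w_j$ is a fixed point of $f$ (so the hypothesis of Theorem~\ref{thm_main} is met), and that Bergweiler's theorem is proved by ruling out wandering domains, parabolic domains, Baker domains, Siegel disks and Herman rings---so your use of the latter to pass from the open dense set $V$ to all of $\cF(f)$ is exactly what the paper has in mind.
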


For example, the assumptions of Corollary \ref{cor_main} and hence those of Theorem \ref{thm_main} are satisfied for $g(z)=\int_0^ze^{-t^2}\,dt+c$ with $-\sqrt{\pi}/2<c<\sqrt{\pi}/2$, see \cite[§8]{bergweiler93}. Clearly, the conclusion of Corollary \ref{cor_main} cannot be true if there exists a cycle of period at least two in $\cF(f)$. 

In order to prove Theorem \ref{thm_main}, we will first prove a general Theorem giving conditions ensuring that the Julia set of a meromorphic function has Lebesgue measure zero. This may be of independent interest. For a meromorphic function $f$, we denote by $\sing(f^{-1})$ the set of singular values of $f$, that is, the set of critical and asymptotic values of $f$ and limit points of those. For $n\ge0$, let $\cN_n=\{z:\,f^n(z)\text{ is not defined}\}$. Let
$$\cP(f):=\overline{\bigcup_{n=0}^\infty f^n(\sing(f^{-1})\setminus\cN_n)}$$
denote the \textit{postsingular set} of $f$. 
For $z_0\in\dC$ and $r>0$, let $\cD(z_0,r)$ denote the open disk centred at $z_0$ with radius $r$. Also, let $\meas(\cdot)$ denote Lebesgue measure, and for measurable $\cA,\cB\subset\dC$  with $0<\meas(\cB)<\infty$, let 
$$\dens(\cA,\cB)=\frac{\meas(\cA\cap\cB)}{\meas(\cB)}$$
 denote the density of $\cA$ in $\cB$. 
 
 Following \cite{mcmullen87}, we call a measurable set $\cA\subset\dC$ \textit{thin at} $\infty$ if there exist $R_0,\epsi_0>0$ such that for all $z\in\dC$, we have 
 $$\dens(\cA, \cD(z,R_0))<1-\epsi_0.$$
Additionally, we introduce the concept that $\cA$ is \textit{thin at} $z_0\in\dC$ if there exist $\delta_1, \epsi_1>0$ such that for all $z\in\overline{\cD(z_0,\delta_1)}$, we have
 \begin{equation}  \label{eq_thinatz}
 \dens(\cA, \cD(z, |z-z_0|))<1-\epsi_1.
 \end{equation}
 We call $\cA$ \textit{uniformly thin at} $\cB\subset\dC$ if there are $\delta_1,\epsi_1>0$ such that \eqref{eq_thinatz} holds for all $z_0\in\cB$.
 
\begin{thm}   \label{thm_zero_measure}
Let $f$ be a meromorphic function that is not constant and not a Möbius transformation. Suppose that there exists $R_1>0$ such that
\begin{enumerate}[(i)]
\item $\cP(f)\cap\cJ(f)\cap\overline{\cD(0,R_1)}$ is a finite set;
\item $\cJ(f)$ is thin at $\infty$;
\item $\cJ(f)$ is uniformly thin at $(\cP(f)\cap\dC)\setminus\overline{\cD(0,R_1)}$.
\end{enumerate}
Then the Lebesgue measure of $\cJ(f)$ is zero.
\end{thm}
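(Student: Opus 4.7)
My plan is a Lebesgue-density / pullback blow-up argument. Assume for contradiction that $\meas(\cJ(f))>0$. First I would verify that $\cP(f)\cap\cJ(f)$ has Lebesgue measure zero: by (i) its intersection with $\overline{\cD(0,R_1)}$ is finite, and (iii) implies that any point $z_0\in(\cP(f)\cap\dC)\setminus\overline{\cD(0,R_1)}$ fails to be a Lebesgue density point of $\cJ(f)$. Indeed, for $z$ near $z_0$ the disk $\cD(z,|z-z_0|)$ is contained in $\cD(z_0,2|z-z_0|)$ and occupies a fixed fraction of its area, so if $z_0$ were a density point, the density of $\cJ(f)$ in $\cD(z,|z-z_0|)$ would tend to $1$, contradicting (iii). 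Hence the outer part has measure zero by the Lebesgue density theorem, and I may select a density point $z_0\in\cJ(f)$ with $z_0\notin\cP(f)\cup\bigcup_{n\ge 0}\cN_n$, chosen generically so that its forward orbit avoids the finite exceptional set from (i).

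Set $\zeta_n=f^n(z_0)$. For each large $n$ I would build a disk $\cD(\zeta_n,2\rho_n)\subset\dC\setminus\cP(f)$ carrying a univalent inverse branch $\psi_n$ of $f^n$ with $\psi_n(\zeta_n)=z_0$. Such a branch exists because $\cP(f)$ is forward invariant and contains all singular values of every iterate of $f$, so any simply connected domain in its complement lifts under $f^n$. Koebe's distortion theorem then gives bounded distortion of $\psi_n$ on the concentric half-radius disk $\cD(\zeta_n,\rho_n)$. The radius $\rho_n$ is chosen, depending on the location of $\zeta_n$, so that $\dens(\cJ(f),\cD(\zeta_n,\rho_n))\le 1-\eta$ for a uniform $\eta>0$: if $\zeta_n$ lies at distance at least $R_0$ from $\cP(f)$ and outside $\overline{\cD(0,R_1)}$, take $\rho_n=R_0$ and invoke (ii); if $\zeta_n$ is within $\delta_1$ of some $w_n\in(\cP(f)\cap\dC)\setminus\overline{\cD(0,R_1)}$, take $\rho_n=|\zeta_n-w_n|$, so that $\cD(\zeta_n,\rho_n)$ avoids $\cP(f)$ with $w_n$ on its boundary and (iii) yields the bound; for $\zeta_n\in\overline{\cD(0,R_1)}$, the finiteness in (i) together with the generic choice of $z_0$ ensures $\zeta_n$ stays at positive distance from $\cP(f)\cap\cJ(f)\cap\overline{\cD(0,R_1)}$, reducing to the previous cases. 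By backward invariance of $\cJ(f)$ and the distortion control, $\psi_n(\cD(\zeta_n,\rho_n))$ is contained in a disk about $z_0$ of radius comparable to $\rho_n/|(f^n)'(z_0)|$, on which the density of $\cJ(f)$ is at most $1-\eta'$ for some uniform $\eta'>0$.

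To close the contradiction I need a subsequence $n_k\to\infty$ with $\rho_{n_k}/|(f^{n_k})'(z_0)|\to 0$, for then we obtain disks about $z_0$ shrinking to a point with density of $\cJ(f)$ uniformly bounded below $1$, contradicting the density-point property at $z_0$. If the orbit $(\zeta_n)$ accumulates on $\cP(f)$, the corresponding subsequence has $\rho_n\to 0$ and the conclusion is immediate. In the complementary regime where $(\zeta_n)$ stays uniformly away from $\cP(f)$, one expects $|(f^n)'(z_0)|\to\infty$ from a telescoping Koebe argument applied to the inverse branches $\psi_n$, since $z_0\in\cJ(f)$ rules out the normality of $\{f^n\}$ in any neighbourhood of $z_0$. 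The principal technical obstacle I anticipate is precisely this derivative blow-up step in the orbit-bounded-away regime, together with the bookkeeping of points of $\cP(f)\cap\cF(f)$ inside $\overline{\cD(0,R_1)}$ -- these are not controlled by (i), but they lie in open Fatou components at positive distance from $\cJ(f)$, so a generic orbit of $z_0\in\cJ(f)$ should also keep away from them.
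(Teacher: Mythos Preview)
Your overall strategy---pull back Fatou disks along inverse branches, control distortion via Koebe, and contradict the Lebesgue density theorem---is the same as the paper's. The first step (measure zero of $\cP(f)\cap\cJ(f)$) is handled exactly as the paper does.

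There is, however, a genuine gap in your treatment of the finite set $\cP_1:=\cP(f)\cap\cJ(f)\cap\overline{\cD(0,R_1)}$. You write that a ``generic choice of $z_0$ ensures $\zeta_n$ stays at positive distance'' from $\cP_1$. Choosing $z_0$ outside the countable set $\cO^-(\cP_1)$ guarantees only that $\zeta_n\notin\cP_1$ for every $n$; it does \emph{not} prevent $\dist(\zeta_n,\cP_1)\to 0$. If the orbit accumulates on some $p\in\cP_1$, then for those $n$ the only candidate radius is $\rho_n\le\dist(\zeta_n,\cP(f))\le|\zeta_n-p|$, and you have no hypothesis giving a uniform lower bound on $\dens(\cF(f),\cD(\zeta_n,\rho_n))$ near $\cP_1$---neither (ii) nor (iii) applies there. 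The paper closes this gap by a separate dynamical argument: it shows that if $\dist(f^n(z),\cP_1)\to 0$ then $z\in\cO^-(\cP_1)$, so that for $z\notin\cO^-(\cP_1)$ one has $\limsup_n\dist(f^n(z),\cP_1)>0$ and may pass to a subsequence bounded away from $\cP_1$. This step is not elementary: it reduces to showing that a Julia-set point attracted to a periodic cycle in $\cJ(f)$ must eventually land on that cycle, which for repelling cycles is easy, for rationally indifferent cycles follows from the Leau flower theorem, and for irrationally indifferent cycles requires P\'erez~Marco's theorem.

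On your self-identified obstacle (shrinking of the pulled-back disks), the paper's resolution is short and worth knowing: if $|\psi_n'(\zeta_n)|\rho_n$ did not tend to zero, a fixed disk $\cD(z_0,\delta)$ would satisfy $f^{n_k}(\cD(z_0,\delta))\subset\cD(\zeta_{n_k},\rho_{n_k})$ for infinitely many $k$; but these image disks avoid $\cP(f)$, and either the density of $\cO^-(\infty)$ in $\cJ(f)$ (transcendental case with infinitely many poles/prepoles) or the blow-up property of $\cJ(f)$ applied to a circle around a fixed point of $\cP(f)$ forces such images eventually to meet $\cP(f)$, a contradiction. No explicit derivative growth estimate is needed.
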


McMullen \cite[Proposition 7.3]{mcmullen87} proved that if $f$ is entire, $\cP(f)$ is compact, $\cP(f)\cap\cJ(f)=\emptyset$ and $\cJ(f)$ is thin at $\infty$, then $\meas(\cJ(f))=0$. A meromorphic function $f$ for which $\cP(f)$ is compact and does not intersect $\cJ(f)$ is called \textit{hyperbolic}. There are various results on iteration of hyperbolic meromorphic functions; see, for example, \cite{stallard99, rempe-sixsmith17, rippon-stallard99, bergweiler-fagella-rempe15, zheng15}. 
Stallard \cite{stallard90} extended McMullen's result to entire functions $f$ with possibly unbounded postsingular set such that $\dist(\cP(f),\cJ(f))>0$ and $\cJ(f)$ is thin at $\infty$. Meromorphic functions $f$ with $\dist(\cP(f),\cJ(f))>0$ are sometimes called $\textit{topologically hyperbolic}$, and have also been considered in \cite{mayer-urbanski10, bfjk20}. 

Jankowski \cite{jankowski96, jankowski97} extended Stallard's result by allowing that $f$ is meromorphic and that there are certain exceptions to the condition  $\dist(\cP(f),\cJ(f))>0$. A more general result was later obtained by Zheng \cite[Theorem 5]{zheng02} who proved that if $f$ is a meromorphic function such that the set $\cP(f)\cap\cJ(f)$ is finite, there exists $R>0$ such that $\dist((\cJ(f)\cap\dC)\setminus\cD(0,R), \cP(f))>0$ and $\cJ(f)$ is thin at $\infty$, then $\meas(\cJ(f))=0$. 

The results by McMullen, Stallard, Jankowki and Zheng mentioned above are special cases of Theorem \ref{thm_zero_measure} since the condition that $\dist((\cJ(f)\cap\dC)\setminus\cD(0,R),\cP(f))>0$ implies that $\cJ(f)$ is uniformly thin at $(\cP(f)\cap\dC)\setminus\overline{\cD(0,R')}$ for $R'>R$. Our theorem is the first of this kind to allow infinitely many postsingular values in the Julia set or an unbounded sequence of postsingular values whose distance to the Julia set tends to zero. 

In general, the condition that $\cJ(f)$ is thin at $\infty$ cannot be dropped. If $|\alpha|$ is small, then the postsingular set of $f(z)=\sin(\alpha z)$ is a compact subset of $\cF(f)$, and McMullen \cite{mcmullen87} showed that $\cJ(f)$ has positive measure. However, there are results where instead of assuming that $\cJ(f)$ is thin at $\infty$ other conditions are imposed, see \cite[Theorem 8]{eremenko-lyubich92}, \cite[Theorems 3 and 4]{zheng02}.

This article is structured as follows. In Section \ref{sec_zero_measure}, we prove Theorem \ref{thm_zero_measure}. In the remaining part of this paper, we prove Theorem \ref{thm_main}. First, in Section \ref{sec_changeOfVar}, we introduce the change of variables $w=q(z)$. In Section \ref{sec_asymptotics}, we give asymptotic representations of $g$ and $f$. In Section \ref{sec_partition}, we introduce a class of subsets of $\dC$ whose preimages under $q$ are connected to the asymptotic behaviour of $f$. In Section \ref{sec_singularities}, we investigate the postsingular set of $f$. In Section \ref{sec_q(F)1}-\ref{sec_q(F)_all}, we investigate the location and size of the set $q(\cF(f))$. Finally, in Section \ref{sec_proof}, we complete the proof of Theorem \ref{thm_main}.


\section{Julia sets of zero measure}  \label{sec_zero_measure}

In this section, we prove Theorem \ref{thm_zero_measure}. The following Lemma is an easy consequence of the well-known Koebe $1/4$-theorem and Koebe distortion theorem (see, e.g., \cite{pommerenke75}).

\begin{lemma}   \label{lemma_koebe}
Let $z_0\in\dC$ and $r>0$, and let $f:\cD(z_0,r)\to\dC$ be holomorphic and injective. Then
$$f(\cD(z_0,r))\supset\cD\left(f(z_0),\frac{1}{4}|f'(z_0)|r\right).$$
Moreover, for $\rho\in(0,1)$,
$$f(\cD(z_0,\rho r))\subset\cD\left(f(z_0),\frac{\rho}{(1-\rho)^2}|f'(z_0)|r\right)$$
and
$$\frac{\min_{z\in\cD(z_0,\rho r)}|f'(z)|}{\max_{z\in\cD(z_0,\rho r)}|f'(z)|}\ge\left(\frac{1-\rho}{1+\rho}\right)^4.$$
\end{lemma}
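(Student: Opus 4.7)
The plan is to reduce the three assertions to classical estimates on the normalized schlicht class on $\dD$ via an affine change of variables. I first observe that injectivity of $f$ on $\cD(z_0,r)$ forces $f'(z_0)\ne 0$: otherwise $f-f(z_0)$ would vanish to order at least two at $z_0$ and hence be at least $2$-to-$1$ in a neighbourhood of $z_0$, contradicting injectivity.

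Given this, I would set
$$\psi(\zeta)=\frac{f(z_0+r\zeta)-f(z_0)}{r\,f'(z_0)}\qquad(\zeta\in\dD),$$
so that $\psi$ is holomorphic and injective on $\dD$ with $\psi(0)=0$ and $\psi'(0)=1$; in other words, $\psi$ belongs to the normalized schlicht class. For the first inclusion, the Koebe $1/4$-theorem applied to $\psi$ gives $\psi(\dD)\supset\cD(0,1/4)$, and undoing the change of variables produces the stated inclusion. For the second inclusion, the growth bound $|\psi(\zeta)|\le|\zeta|/(1-|\zeta|)^2$ valid throughout the schlicht class translates directly to $|f(z)-f(z_0)|\le\rho|f'(z_0)|r/(1-\rho)^2$ for $z\in\cD(z_0,\rho r)$. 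For the third assertion, the Koebe distortion theorem yields
$$\frac{1-|\zeta|}{(1+|\zeta|)^3}\le|\psi'(\zeta)|\le\frac{1+|\zeta|}{(1-|\zeta|)^3}\qquad(|\zeta|<1),$$
and since $|f'(z)|=|f'(z_0)|\cdot|\psi'((z-z_0)/r)|$ on $\cD(z_0,r)$, the ratio of the lower to the upper bound at $|\zeta|\le\rho$ gives the factor $((1-\rho)/(1+\rho))^4$ claimed for $\min|f'|/\max|f'|$ over $\cD(z_0,\rho r)$.

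Since the lemma is essentially a direct transcription of three classical results after an affine normalization, I do not anticipate any genuine obstacle; the only points requiring care are the preliminary verification that $f'(z_0)\ne 0$ and the chain-rule bookkeeping when transferring the estimates from $\psi$ back to $f$. This is exactly the sense in which the author describes the statement as an easy consequence of the Koebe theorems, and it is all that will be needed in Section~\ref{sec_zero_measure} to control distortion of univalent branches of iterates.
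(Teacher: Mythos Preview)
Your argument is correct and is precisely the standard reduction the paper has in mind: the paper does not actually supply a proof of this lemma but simply remarks that it is an easy consequence of the Koebe $1/4$-theorem and the Koebe distortion theorem, citing \cite{pommerenke75}. Your affine normalization to the schlicht class and subsequent application of the classical growth and distortion bounds is exactly how one unpacks that remark, so there is nothing to add.
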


For $\cA\subset\dC$, denote the forward orbit of $\cA$ by
$$\cO^+(\cA):=\bigcup_{n=0}^\infty f^{n}(\cA\setminus\cN_n)$$
where $\cN_n=\{z:\,f^n(z) \text{ is not defined}\}$, and for $\cB\subset\hat\dC$, let 
$$\cO^-(\cB):=\bigcup_{n=1}^\infty f^{-n}(\cB)$$
be the backward orbit of $\cB$. For $z\in\hat\dC$, write
$$\cO^\pm(z):=\cO^\pm(\{z\}).$$

We call $z\in\hat\dC$ an \textit{exceptional point} of the meromorphic function $f$ if $\cO^-(z)$ is finite. It is not difficult to see that any meromorphic function that is not constant and not a Möbius transformation has at most two exceptional points.

\begin{lemma}   \label{lemma_blowUp}
Let $f$ be a meromorphic function that is not constant and not a Möbius transformation. If $f$ is transcendental, in addition suppose that $\cO^-(\infty)$ is finite. Let $\cK$ be a compact subset of $\dC$ that contains no exceptional point of $f$, and let $\cU\subset\dC$ be open with $\cU\cap \cJ(f)\ne\emptyset$. Then there is $n_0\in\dN$ such that 
$\cK\subset f^n(\cU)$ for all $n\ge n_0$.
\end{lemma}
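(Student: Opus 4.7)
The plan is to prove a local statement---for every non-exceptional $z^*$, there is a neighborhood $W$ of $z^*$ and $n_{z^*}\in\dN$ with $W\subset f^n(\cU)$ for all $n\ge n_{z^*}$---and then use compactness of $\cK$ to extract a finite subcover.

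To construct $W$, I would first use density of repelling periodic points in $\cJ(f)$ (classical for rational maps; Baker and Bergweiler in the transcendental meromorphic setting) to pick a repelling periodic point $\zeta\in\cU\cap\cJ(f)$ of some period $p$. Koenigs linearization of $f^p$ at $\zeta$ produces a neighborhood $V\subset\cU$ of $\zeta$ together with a nested sequence $V\supset V_1\supset V_2\supset\cdots\to\{\zeta\}$ on which $f^{pk}$ maps $V_k$ conformally onto $V$. Consequently $V\subset f^{pk}(\cU)$ for every $k\ge 0$; this expansion is the engine of the argument.

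For each residue $r\in\{0,\ldots,p-1\}$ I would apply Montel's theorem to the non-normal family $\{f^{pk+r}|_V\}_{k\ge 0}$: its image union $\bigcup_{k\ge 0}f^{pk+r}(V)$ omits at most two points, each of which can be shown to be an exceptional point of $f$ via the standard argument that an omitted value has its entire $f^{-1}$-preimage set omitted, forcing $\cO^-$ to be finite. Since $z^*$ is not exceptional, for each $r$ we obtain $m_r\equiv r\pmod p$ and $v_r\in V$ with $f^{m_r}(v_r)=z^*$; because $f^{m_r}(v_r)\in\dC$ the map $f^{m_r}$ is defined in a neighborhood of $v_r$, and the open mapping theorem provides $U_r\subset V$ about $v_r$ such that $W^{(r)}:=f^{m_r}(U_r)$ is an open neighborhood of $z^*$. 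Since $U_r\subset V\subset f^{p\ell}(\cU)$ for every $\ell\ge 0$, each $u\in U_r$ is $f^{p\ell}(w)$ for some $w\in\cU$, and $f^{m_r}(u)=f^{m_r+p\ell}(w)\in f^{m_r+p\ell}(\cU)$. Hence $W^{(r)}\subset f^n(\cU)$ for every $n\equiv r\pmod p$ with $n\ge m_r$. Intersecting, $W:=\bigcap_{r=0}^{p-1}W^{(r)}$ is a neighborhood of $z^*$ contained in $f^n(\cU)$ for all $n\ge n_{z^*}:=\max_r m_r$, completing the local statement.

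The main obstacle I foresee is verifying that the values omitted by the family $\{f^{pk+r}|_V\}_k$ are genuinely exceptional points of $f$, so that $\cK$ (assumed disjoint from them) is covered by the blow-up. In the rational case this is routine; in the transcendental meromorphic case the finiteness of $\cO^-(\infty)$ is what enables the identification, preventing iterates of $f$ from manufacturing omitted values outside the exceptional set of $f$.
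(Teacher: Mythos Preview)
The paper does not give its own proof of this lemma; it simply attributes the result to Fatou and notes that Fatou's argument for entire functions carries over to transcendental meromorphic functions with finite $\cO^-(\infty)$. Your sketch is essentially a reconstruction of that classical argument, and the overall strategy---pick a repelling periodic point in $\cU$, use the local expansion at it to get $V\subset f^{pk}(\cU)$ for all $k\ge0$, then invoke Montel residue-by-residue and finish by compactness---is sound.

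One step does need repair. For a fixed residue $r$ the omitted set $E_r:=\hat\dC\setminus\bigcup_{k\ge0}f^{pk+r}(V)$ is \emph{not} backward invariant under $f$ by the reasoning you indicate: if $a\in E_r$, $f(b)=a$ and $b\in f^{pk+r}(V)$, you only conclude $a\in f^{pk+r+1}(V)$, which lies in a different residue class and yields no contradiction. What is true is that $E_r$ is backward invariant under $f^p$, so its (at most two) points are exceptional for $f^p$. You then need the further observation that the exceptional points of $f^p$ and of $f$ coincide; this is elementary but not vacuous (if $B:=\{a\}\cup\cO^-_{f^p}(a)$ is finite, check that each $f^{-r}(B)$ with $0\le r<p$ is finite and that $\bigcup_{0\le r<p}f^{-r}(B)$ is a finite $f$-backward-invariant set containing $\cO^-_f(a)$). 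With this patch the argument is complete.
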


This is due to Fatou for rational \cite[p.39]{fatou20} and entire functions \cite[p.356]{fatou26}. His proof for entire functions also works for transcendental meromorphic functions where $\cO^-(\infty)$ is finite. 
We also require the following result.

\begin{lemma}   \label{lemma_convToCycle}
Let $f$ be a meromorphic function that is not constant and not a Möbius transformation, and let $\cC=\{z_0,f(z_0),...,f^p(z_0)=z_0\}$ be a periodic cycle of $f$. Suppose that $z\in \cJ(f)$ is attracted by $\cC$. Then there exists $n\in\dN$ such that $f^n(z)\in \cC$.
\end{lemma}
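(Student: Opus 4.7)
The plan is to pass to the iterate $F := f^p$, since $\cJ(F) = \cJ(f)$ and every point of $\cC$ is a fixed point of $F$. Because $f^n(z) \to \cC$ and the cycle has period $p$, the subsequence $F^n(z) = f^{pn}(z)$ converges to a particular point $z_0 \in \cC$. It therefore suffices to prove: if $z \in \cJ(F)$ and $F^n(z) \to z_0$, then $F^m(z) = z_0$ for some $m \ge 0$. I would argue by contradiction and assume $F^n(z) \ne z_0$ for every $n$.

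The first observation is that $F^n(z) \in \cJ(F)$ for all $n$ by forward invariance, and since $\cJ(F)$ is closed, $z_0 \in \cJ(F) = \cJ(f)$; hence $\cC \subset \cJ(f)$. This already excludes attracting cycles and Siegel discs or Herman rings, so the multiplier $\lambda := F'(z_0)$ satisfies $|\lambda| \ge 1$ and we are left with three cases: $z_0$ is repelling ($|\lambda|>1$), parabolic ($\lambda$ a root of unity on the unit circle), or Cremer ($|\lambda|=1$ with $\lambda$ not a root of unity).

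In the repelling case, I would pick a disk $D = \cD(z_0,r)$ and a constant $c \in (1,|\lambda|)$ with $|F(w)-z_0| \ge c\,|w-z_0|$ on $D$, which exists by the Taylor expansion of $F$ at $z_0$ provided $r$ is small enough. Since $F^n(z) \to z_0$, eventually $F^n(z) \in D$; but then $|F^{n+k}(z)-z_0|$ would grow by at least the factor $c>1$ at each further step until the orbit leaves $D$, contradicting $F^n(z) \to z_0$. In the parabolic case, let $q$ be such that $\lambda^q = 1$ and set $G := F^q$, so $G'(z_0)=1$. By the Leau-Fatou flower theorem, the parabolic basin of $z_0$ under $G$ consists of the attracting petals at $z_0$ together with their iterated $G$-preimages, and this basin (minus the backward orbit of $z_0$ itself) lies in $\cF(G) = \cF(f)$. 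Since any orbit converging to $z_0$ without landing must eventually lie in this basin, we would get $F^{qn}(z) \in \cF(f)$ for large $n$, contradicting $F^{qn}(z) \in \cJ(f)$.

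The main obstacle is the Cremer case, which cannot be dispatched by purely local expansion or by the flower theorem. Here I would invoke the classical snail lemma of Fatou: if a holomorphic germ at a fixed point admits an orbit converging to the fixed point without ever landing on it, then the multiplier must lie in the open unit disc or be a root of unity. Since in the Cremer case $\lambda$ lies on the unit circle and is not a root of unity, this rules out the existence of such an orbit and yields the final contradiction.
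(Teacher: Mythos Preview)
Your decomposition into repelling, parabolic, and Cremer cases is exactly the paper's approach: it too dispatches the repelling case elementarily, appeals to the Leau--Fatou flower theorem for the rationally indifferent case, and cites a deep result for the irrationally indifferent case. The one genuine inaccuracy is in that last step. What you call the ``classical snail lemma of Fatou'' is a statement about \emph{invariant paths}: if $\gamma:[0,\infty)\to\dC\setminus\{z_0\}$ satisfies $F(\gamma(t))=\gamma(t+1)$ and $\gamma(t)\to z_0$, then the multiplier is either in the open unit disc or equals $1$. A discrete orbit $F^n(z)\to z_0$ does not automatically yield such a path, and when $|\lambda|=1$ there is no elementary way to interpolate one, since you have no control on the diameters of $F^n$ applied to an initial arc. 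The assertion you actually need---that no orbit can converge to a Cremer fixed point without eventually landing on it---is a much deeper theorem of P\'erez~Marco (1995), which is precisely the reference the paper gives.
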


Clearly, the hypotheses imply that $\cC\subset \cJ(f)$. It is not difficult to see that the conclusion of Lemma \ref{lemma_convToCycle} is true for repelling cycles. For rationally indifferent cycles, the result follows from the Leau flower theorem \cite[§10]{milnor06}, and for irrationally indifferent cycles, it was shown by P\'{e}rez Marco \cite{perezmarco95}.

In Lemma \ref{lemma_nodensitypoint}, we give conditions ensuring that a point $z\in\cJ(f)$ is not a point of density of $\cJ(f)$. We will then use Lemma \ref{lemma_nodensitypoint} and the Lebesgue density theorem to prove Theorem \ref{thm_zero_measure}.

\begin{lemma}   \label{lemma_nodensitypoint}
Let $f$ be a meromorphic function that is not constant and not a Möbius transformation, and let $z\in \cJ(f)\setminus\cO^{-}(\cP(f)\cup\{\infty\})$. Suppose that there exist sequences $(n_k)\in\dN^\dN$ with $\lim_{k\to\infty }n_k=\infty$ and $(r_k)\in(0,\infty)^\dN$ satisfying the following conditions:
\begin{enumerate}[(i)]
\item $\cD(f^{n_k}(z),r_k)\cap \cP(f)=\emptyset$ for all $k\in\dN$;
\item there is $\varepsilon>0$ such that $\dens(\cF(f), \cD(f^{n_k}(z),r_k))\ge\varepsilon$ for all $k\in\dN$.
\end{enumerate}
Then $z$ is not a point of density of $\cJ(f)$.
\end{lemma}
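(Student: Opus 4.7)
The plan is to construct a univalent inverse branch $\phi_k$ of $f^{n_k}$ on $\cD(f^{n_k}(z), r_k)$ sending $f^{n_k}(z)$ back to $z$, use it to transport a definite fraction of the Fatou-set mass from this disk to a small neighborhood of $z$, and conclude via the Lebesgue density theorem. To build $\phi_k$, note that $z \notin \cO^{-}(\cP(f) \cup \{\infty\})$ implies every $f^j(z)$ with $1 \leq j \leq n_k$ is defined and lies outside $\cP(f)$. Together with hypothesis (i) this guarantees that the successive components of preimages of $\cD(f^{n_k}(z), r_k)$ under $f$ along the orbit $f^{n_k}(z), f^{n_k-1}(z), \ldots, z$ stay away from $\sing(f^{-1}) \subset \cP(f)$; the monodromy theorem then produces a univalent branch $\phi_k \colon \cD(f^{n_k}(z), r_k) \to \dC$ of $f^{-n_k}$ with $\phi_k(f^{n_k}(z)) = z$. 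Set $d_k := |\phi_k'(f^{n_k}(z))|$.

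Next, I would apply Lemma \ref{lemma_koebe} to $\phi_k$: for any fixed $\rho \in (0,1)$ one has $\phi_k(\cD(f^{n_k}(z), \rho r_k)) \subset \cD(z, C_\rho d_k r_k)$ with $C_\rho := \rho/(1-\rho)^2$, and $|\phi_k'|$ is comparable to $d_k$ on $\cD(f^{n_k}(z), \rho r_k)$ up to constants depending only on $\rho$. Choosing $\rho$ close enough to $1$ that $1 - \rho^2 < \epsi/2$, hypothesis (ii) yields $\meas(\cF(f) \cap \cD(f^{n_k}(z), \rho r_k)) \geq (\epsi/2) \pi r_k^2$. Complete invariance of $\cF(f)$ and univalence of $\phi_k$ give $\phi_k(\cF(f) \cap \cD(f^{n_k}(z), \rho r_k)) \subset \cF(f) \cap \cD(z, C_\rho d_k r_k)$, and the change-of-variables formula combined with the distortion bound produces an $\epsi_0 > 0$, depending only on $\epsi$, such that
\[
\dens\bigl(\cF(f), \cD(z, C_\rho d_k r_k)\bigr) \geq \epsi_0 \quad \text{for every } k.
\]

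To finish via the Lebesgue density theorem I need the radii $C_\rho d_k r_k$ to tend to $0$ along a subsequence. If this failed, then $d_k r_k \geq \alpha > 0$ on a subsequence, so Koebe's $1/4$-theorem would force $\cD(z, \alpha/4) \subset \phi_k(\cD(f^{n_k}(z), r_k))$, making each $f^{n_k}$ univalent on $\cD(z, \alpha/4)$ with image inside $\cD(f^{n_k}(z), r_k)$. The normalizations $g_k := (f^{n_k} - f^{n_k}(z))/r_k$ then map $\cD(z, \alpha/4)$ into $\dD$, so $\{g_k\}$ is normal by Montel; analyzing the four cases according as $r_k$ and $f^{n_k}(z)$ remain bounded or tend to $\infty$ (the case $r_k \to \infty$ with $f^{n_k}(z)$ bounded being incompatible with hypothesis (i) since $\cP(f) \neq \emptyset$), one deduces that $\{f^{n_k}\}$ itself is spherically normal on a neighborhood of $z$, contradicting $z \in \cJ(f)$. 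This shrinkage step is the main obstacle: bookkeeping how $r_k$ and $f^{n_k}(z)$ may escape to infinity along the sequence is the only delicate ingredient beyond the standard Koebe estimates and the measure-transport argument.
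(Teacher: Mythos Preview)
Your overall architecture---pull back via a univalent branch $\phi_k$ of $f^{-n_k}$ on $\cD(f^{n_k}(z), r_k)$ using hypothesis~(i), apply Koebe distortion on a slightly smaller concentric disk to transport a definite Fatou density back to a disk about $z$, then invoke the Lebesgue density theorem once the pulled-back radii shrink---matches the paper's proof exactly. The one substantive divergence is the shrinkage step, and there your sketch is incomplete.

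Two issues. First, case~4 of your analysis ($r_k\to\infty$ and $f^{n_k}(z)\to\infty$) is not addressed: the disks $\cD(f^{n_k}(z),r_k)$ may run off to infinity without ever containing a fixed finite point of $\cP(f)$, so hypothesis~(i) gives no immediate contradiction, and uniform spherical convergence of $f^{n_k}$ is not automatic without a further split on the sign and size of $|f^{n_k}(z)|-r_k$. Second, and more fundamentally, spherical normality of the \emph{subfamily} $\{f^{n_k}\}$ on a neighbourhood of $z$ does not contradict $z\in\cJ(f)$ by definition; the Julia set is where the \emph{full} family fails to be normal. That no infinite subfamily can be normal near a Julia point is true, but its proof already requires the blow-up property (Lemma~\ref{lemma_blowUp}) or the density of repelling periodic points. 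The paper uses blow-up directly and sidesteps both difficulties: if $|\phi_k'(f^{n_k}(z))|\,r_k\not\to 0$, Koebe produces a fixed disk $\cD(z,\delta)$ with $f^{n_k}(\cD(z,\delta))\subset\cD_k$ for infinitely many $k$; when $\cO^-(\infty)$ is infinite this is already impossible since preimages of $\infty$ are dense in $\cJ(f)$, and otherwise Lemma~\ref{lemma_blowUp} forces $f^{n_k}(\cD(z,\delta))$ to eventually contain a fixed circle about some $v\in\cP(f)\cap\dC$, whence $v\in\cD_k$ by convexity, contradicting~(i). This is shorter and avoids the case bookkeeping entirely.
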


\begin{proof}
Let 
$$\omega:=\sqrt{1-\frac{\varepsilon}{2}}.$$
For $k\in\dN$, let
$$z_k:=f^{n_k}(z),\qquad \cD_k:=\cD(z_k, r_k)\qquad\text{and}\,\, \cD_k':=\cD\left(z_k,\omega r_k\right).$$
Since $\cD_k\cap \cP(f)=\emptyset$, there is a branch $\varphi_k$ of $f^{-n_k}$ defined in $\cD_k$ with $\varphi_k(z_k)=z$. By Koebe's theorems (see Lemma \ref{lemma_koebe}),
\begin{equation}  \label{eq_nodens_D}
\cD\left(z,\frac{\omega}{4}r_k|\vi_k'(z_k)|\right)\subset\vi_k(\cD_k')\subset \cD\left(z,\frac{\omega}{(1-\omega)^2}r_k|\vi_k'(z_k)|\right).
\end{equation}

We claim that 
\begin{equation}   \label{eq_lim0}
\lim_{k\to\infty}\left|\varphi_k'(z_k)\right|r_k=0.
\end{equation}

If this was not true, there would be $\delta>0$ such that $\cD(z,\delta)\subset \varphi_k(\cD_k')$ for infinitely many $k$, and hence $f^{n_k}(\cD(z,\delta))\subset \cD_k'$ for infinitely many $k$. If $f$ is transcendental and $\cO^-(\infty)$ is infinite, this is impossible because $\cO^-(\infty)$ is dense in $\cJ(f)$. Suppose that $f$ is rational or $\cO^-(\infty)$ is finite. Fix $v\in\cP(f)\cap\dC$ and let $\cK$ be of the form $\cK=\{z:\,|z-v|=\rho\}$ where $\rho$ is chosen such that $\cK$ does not contain any exceptional point of $f$. Then by Lemma \ref{lemma_blowUp}, $\cK\subset f^{n_k}(\cD(z,\delta))\subset \cD_k'\subset \cD_k$ for all large $k$. But this implies $v\in\cD_k$, contradicting (i). This proves \eqref{eq_lim0}.

We will now show that 
$$\limsup_{r\to0}\,\dens(\cF(f), \cD(z,r))>0,$$
that is, $z$ is not a point of density of $\cJ(f)$.
We have
\begin{align*}
\dens(\cF(f), \varphi_k(\cD_k'))&\ge\left(\frac{\min_{\zeta\in \cD_k'}|\varphi_k'(\zeta)|}{\max_{\zeta\in \cD_k'}|\varphi_k'(\zeta)|}\right)^2\dens(\cF(f), \cD_k')\\
&=\left(\frac{\min_{\zeta\in \cD_k'}|\varphi_k'(\zeta)|}{\max_{\zeta\in \cD_k'}|\varphi_k'(\zeta)|}\right)^2\frac{\meas(\cD_k'\cap \cF(f))}{\meas \cD_k'}\\
&\ge\left(\frac{\min_{\zeta\in \cD_k'}|\varphi_k'(\zeta)|}{\max_{\zeta\in \cD_k'}|\varphi_k'(\zeta)|}\right)^2\cdot\frac{\meas(\cD_k\cap \cF(f))-\meas (\cD_k\setminus \cD_k')}{\meas \cD_k}.
\end{align*}
Hence, by the Koebe distortion theorem and (ii),
\begin{equation}  \label{eq_nodens_dens}
\dens(\cF(f),\vi_k(\cD_k'))\ge \left(\frac{1-\omega}{1+\omega}\right)^8\cdot\left(\varepsilon-\frac{\pi r_k^2-\pi r_k^2\omega^2}{\pi r_k^2}\right)
=\left(\frac{1-\omega}{1+\omega}\right)^8\cdot\frac{\varepsilon}{2}.
\end{equation}
By \eqref{eq_nodens_dens} and \eqref{eq_nodens_D},
\begin{equation}
\begin{split}
&\dens\left(\cF(f), \cD\left(z,\frac{\omega}{(1-\omega)^2}|\varphi_k'(z_k)|r_k\right)\right)\\
&\ge\dens(\cF(f), \varphi_k(\cD_k'))\cdot\dens\left(\vi_k(\cD_k'), \cD\left(z,\frac{\omega}{(1-\omega)^2}|\varphi_k'(z_k)|r_k\right)\right)\\
&\ge\left(\frac{1-\omega}{1+\omega}\right)^8\frac{\epsi}{2}\cdot\frac{1}{16}\left(1-\omega\right)^4.\qedhere
\end{split}
\end{equation}
\end{proof}

\begin{proof}[Proof of Theorem \ref{thm_zero_measure}]
We first show that $\meas(\cP(f)\cap\cJ(f))$ is zero. In order to do so, write $\cP(f)\cap\cJ(f)\cap\dC=\cP_1\cup\cP_2$ with $\cP_1:=\cP(f)\cap\cJ(f)\cap\overline{\cD(0,R_1)}$ and $\cP_2:=(\cP(f)\cap\cJ(f)\cap\dC)\setminus\overline{\cD(0,R_1)}$. Since $\cP_1$ is a finite set, we only have to show that $\meas(\cP_2)=0$.

Since $\cJ(f)$ is uniformly thin at $(\cP(f)\cap\dC)\setminus\overline{\cD(0,R_1)}$, there are $\delta_1,\epsi_1>0$ such that for all $v\in\cP(f)\cap\dC$ with $|v|> R_1$ and all $\zeta\in\overline{\cD(v,\delta_1)}$, we have 
\begin{equation}  \label{eq_thinatv}
\dens(\cF(f),\cD(\zeta,|\zeta-v|))>\epsi_1.
\end{equation}
Let $z\in\cP_2$ and $r\in(0,2\delta_1)$. Then $\cD(z+r/2,r/2)\subset\cD(z,r)$ and $\dens(\cF(f),\cD(z+r/2,r/2))>\epsi_1$. Thus, 
$$\dens(\cF(f),\cD(z,r))\ge\dens\left(\cF(f),\cD\left(z+\frac{r}{2},\frac{r}{2}\right)\right)\cdot\dens\left(\cD\left(z+\frac{r}{2},\frac{r}{2}\right),\cD(z,r)\right)>\frac{\epsi_1}{4}.$$
Hence, $z$ is not a point of density of $\cJ(f)$. By the Lebesgue density theorem (see, e.g., \cite[Corollary 2.14]{mattila95}), the Lebesgue measure of $\cP_2$ is zero. So $\meas(\cP(f)\cap\cJ(f)\cap\dC)=0$ and hence also $\meas(\cO^-(\cP(f)\cap\cJ(f)\cap\dC))=0$.
Since $\cO^-(\infty)$ is countable, we obtain that $\meas(\cO^-((\cP(f)\cap\cJ(f))\cup\{\infty\}))=0$.

Next, we show that each $z\in\cJ(f)\setminus\cO^-(\cP(f)\cup\{\infty\})$ satisfies 
\begin{equation}  \label{eq_disttoP1}
\limsup_{n\to\infty}\dist(f^n(z),\cP_1)>0.
\end{equation} 
In order to do so, suppose that $\lim_{n\to\infty}\dist(f^n(z),\cP_1)=0$. We show that then $z\in\cO^-(\cP_1)$, contradicting our assumption.

Because $\cP_1$ is finite, there is a subsequence, $(f^{n_k}(z))$, that converges to some $w\in \cP_1$. For all $j\in\dN$, we have $f^j(w)=\lim_{k\to\infty}f^{n_k+j}(z)\in\cP_1$. Thus, $w$ is preperiodic, that is, $f^l(w)$ is periodic for some $l\in\dN$. Assume without loss of generality that $l=0$, that is, there is $p\in\dN$ with $f^p(w)=w.$ 

Let $\alpha>0$ such that the disks $\overline{\cD(\zeta,\alpha)}$ with $\zeta\in \cP_1$ are pairwise disjoint, and let $\beta\in(0,\alpha)$ such that $f(\cD(f^j(w),\beta))\subset \cD(f^{j+1}(w),\alpha)$ for all $j\in \{0,...,p-1\}.$ Then by periodicity, this is true for all $j\ge0$. For large $k$, we have $\dist(f^{n_k+j}(z),\cP_1)<\beta$ for all $j\ge0$ and $f^{n_k}(z)\in \cD(w,\beta)$. Then $f^{n_k+1}(z)\in \cD(f(w),\alpha)$. Since the disks $\overline{\cD(\zeta, \alpha)}$ with $\zeta\in \cP_1$ are disjoint, we have $|f^{n_k+1}(z)-\zeta|>\alpha>\beta$ for all $\zeta\in \cP_1\setminus\{f(w)\}$. Thus, $|f^{n_k+1}(z)-f(w)|<\beta$. Inductively, we obtain that $f^{n_k+j}(z)\in \cD(f^j(w), \beta)$ for all $j\in\dN$. Thus, $f^n(z)$ is attracted by the cycle $\{w,f(w),...,f^{p-1}(w),f^p(w)=w\}$. By Lemma \ref{lemma_convToCycle}, $z$ is eventually mapped to this cycle, so $z\in\cO^-(\cP_1)$. 

Now let $z\in \cJ(f)\setminus\cO^-(\cP(f)\cup\{\infty\})$. By \eqref{eq_disttoP1}, there exist a subsequence $(f^{n_k}(z))$ and $\eta>0$ such that 
\begin{equation}   \label{eq_distP1}
\dist(f^{n_k}(z), \cP_1)>\eta
\end{equation}
for all $k\in\dN$. We will show that $z$ satisfies the assumptions of Lemma \ref{lemma_nodensitypoint} and hence is not a point of density of $\cJ(f)$. Let 
$$d_k:=\dist(f^{n_k}(z), \cP(f)),$$
and let $z_k\in \cP(f)$ with
$$|f^{n_k}(z)-z_k|=d_k.$$
First suppose that the sequence $(d_k)$ is bounded, say $d_k\le\gamma$ for all $k$. 
We distinguish three cases.

\textit{1st case:} $|z_k|\le R_1$ for infinitely many $k$. 
By passing to a subsequence if necessary, we can assume that $|z_k|\le R_1$ for all $k$. Then $(f^{n_k}(z))$ is bounded, and by again passing to a subsequence, we can assume that $f^{n_k}(z)$ converges to some $w\in \cJ(f)$. By \eqref{eq_distP1}, we have $w\notin \cP_1$. If $|w|>R_1$, then for large $k$, we have
$$d_k=|f^{n_k}(z)-z_k|\ge|f^{n_k}(z)|-|z_k|\ge|f^{n_k}(z)|-R_1>|f^{n_k}(z)-w|.$$
 Thus, $w\notin\cP(f)$, so $\nu:=\dist(w, \cP(f))>0$. For large $k$,
$$\cD\left(w,\frac{\nu}{4}\right)\subset \cD\left(f^{n_k}(z),\frac{\nu}{2}\right)\subset \cD(w,\nu).$$
Thus, $\cD(f^{n_k}(z),\nu/2)\cap \cP(f)=\emptyset$, and
\begin{equation}
\begin{split}
&\dens\left(\cF(f), \cD\left(f^{n_k}(z),\frac{\nu}{2}\right)\right)\\
&\ge\dens\left(\cD\left(w,\frac{\nu}{4}\right),\cD\left(f^{n_k}(z),\frac{\nu}{2}\right)\right)\cdot\dens\left(\cF(f), \cD\left(w,\frac{\nu}{4}\right)\right)\\
&=\frac{1}{4}\dens\left(\cF(f), \cD\left(w, \frac{\nu}{4}\right)\right)>0.
\end{split}
\end{equation} 
By Lemma \ref{lemma_nodensitypoint}, $z$ is not a point of density of $\cJ(f)$.

\textit{2nd case:} $|z_k|> R_1$ and $d_k\le\delta_1$ for infinitely many $k$, without loss of generality for all $k$. Then by \eqref{eq_thinatv},
$$\dens(\cF(f), \cD(f^{n_k}(z), d_k))>\varepsilon_1.$$
By Lemma \ref{lemma_nodensitypoint}, $z$ is not a point of density of $\cJ(f)$.

\textit{3rd case:} $|z_k|> R_1$ and $d_k>\delta_1$ for infinitely many $k$, without loss of generality for all $k$. Let
$$w_k:=z_k+\frac{\delta_1}{d_k}(f^{n_k}(z)-z_k).$$
Then
$$|f^{n_k}(z)-w_k|=\left(1-\frac{\delta_1}{d_k}\right)|f^{n_k}(z)-z_k|=d_k-\delta_1$$
and hence
$$\cD(w_k,\delta_1)\subset \cD(f^{n_k}(z), d_k).$$
Also, $|w_k-z_k|=\delta_1$. By the hypotheses,
\begin{align*}
\dens(\cF(f), \cD(f^{n_k}(z),d_k))&\ge\dens(\cD(w_k,\delta_1),\cD(f^{n_k}(z),d_k))\cdot\dens(\cF(f), \cD(w_k,\delta_1))\\
&\ge\frac{\delta_1^2}{d_k^2}\epsi_1\ge\frac{\delta_1^2}{\gamma^2}\varepsilon_1.
\end{align*}
By Lemma \ref{lemma_nodensitypoint}, $z$ is not a point of density of $\cJ(f)$. 

Now suppose that the sequence $(d_k)$ is unbounded. Since $\cJ(f)$ is thin at $\infty$, there are $R_0,\epsi_0>0$ such that for all $v\in\dC$, we have
$$\dens(\cF(f),\cD(v,R_0))>\epsi_0.$$
 By passing to a subsequence if necessary, we can assume that
$d_k\ge R_0$ for all $k$. Then $\cD(f^{n_k}(z),R_0)\cap \cP(f)=\emptyset$ for all $k$. Also, 
$$\dens(\cF(f), \cD(f^{n_k}(z),R_0))>\varepsilon_0.$$ 
By Lemma \ref{lemma_nodensitypoint}, $z$ is not a point of density of $\cJ(f)$. 

Altogether, it follows that the set of density points of $\cJ(f)$ has Lebesgue measure zero. By the Lebesgue density theorem, the Lebesgue measure of $\cJ(f)$ is zero.
\end{proof}


\section{A change of variables}    \label{sec_changeOfVar}

Throughout the remaining part of the paper, let $g$ be defined by \eqref{eq_g}, that is, 
$$g(z)=\int_0^zp(t)e^{q(t)}\,dt+c.$$

\begin{remark}
Suppose that $q(t)=at^d+O(t^{d-1})$ as $t\to\infty$, where $a\in\dC\setminus\{0\}$ and $d\ge1$. Let $\alpha\in\dC$ with $\alpha^d=a$. Then $q(t/\alpha)=t^d+O(t^{d-1})$ as $t\to\infty$, 
$$g(z/\alpha)=\int_0^{z/\alpha}p(t)e^{q(t)}\,dt+c=\int_0^z\frac{1}{\alpha}p\left(\frac{t}{\alpha}\right)e^{q\left(t/\alpha\right)}\,dt+c,$$
and Newton's method for $g(z/\alpha)$ is conjugate to Newton's method for $g$ via $z\mapsto\alpha z$. Thus, we can and will assume without loss of generality that $a=1$, that is,
$$q(t)=t^d+O(t^{d-1})$$
as $t\to\infty.$

 Also, since the functions $g$ and $b\cdot g$ for $b\in\dC\setminus\{0\}$ have the same zeros and Newton's methods for $g$ and $b\cdot g$ coincide, we can and will assume without loss of generality that $p$ has the form 
 $$p(t)=dt^m+O(t^{m-1})$$
  as $t\to\infty$, where $d=\deg(q)$.
\end{remark}

Let $f$ be defined by \eqref{eq_f}, that is, $f$ is the Newton map corresponding to $g$. In order to prove Theorem \ref{thm_main}, it will be useful to consider the change of variables $w=q(z)$. Let $R>0$ such that all critical values of $q$ are contained in $\cD(0,R)$ and such that for $|z|\ge(1/2)R^{1/d}$, we have
\begin{equation}   \label{eq_qEstimate}
\frac{1}{2^d}|z|^d\le|q(z)|\le2^d|z|^d.
\end{equation}
Define
$$\cG:=\dC\setminus(\overline{\cD(0,R)}\cup[0,\infty)).$$

\begin{lemma}   \label{lemma_change_of_var}
There exists $c>0$ such that the set $q^{-1}(\cG)$ consists of $d$ components, $\cS_1,..., \cS_d$, satisfying
$$\cS_j\subset\left\{z:\,|z|>\frac{1}{2}R^{1/d},\,\frac{2(j-1)\pi}{d}-\frac{c}{|z|}<\arg(z)<\frac{2j\pi}{d}+\frac{c}{|z|}\right\}$$
and
$$\cS_j\supset\left\{z:\,|z|>2R^{1/d},\,\frac{2(j-1)\pi}{d}+\frac{c}{|z|}<\arg(z)<\frac{2j\pi}{d}-\frac{c}{|z|}\right\}$$
for all $j\in\{1,...,d\}$. Moreover, $q$ maps each $\cS_j$ conformally onto $\cG$.
\end{lemma}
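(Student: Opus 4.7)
My approach exploits the covering-space structure of $q$ over the simply connected target $\cG$. The set $\cG=\{|z|>R\}\setminus[R,\infty)$ is the exterior of a closed disk slit along a ray; a branch of $\log$ maps it conformally onto a half-strip, so $\cG$ is simply connected. Since $R$ was chosen so that every critical value of $q$ lies in $\cD(0,R)\subset\dC\setminus\cG$, the restriction $q\colon q^{-1}(\cG)\to\cG$ is holomorphic, locally injective, and proper (as $q$ is a polynomial), hence an unramified $d$-sheeted covering map. Any covering of a simply connected base is trivial, so $q^{-1}(\cG)$ decomposes into exactly $d$ components, each mapped biholomorphically, in particular conformally, onto $\cG$ by $q$. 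This gives the structural part of the lemma.

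Next, I dispose of the interior disk. Applying the upper estimate of \eqref{eq_qEstimate} on the circle $|z|=R^{1/d}/2$ yields $|q(z)|\le 2^d\cdot(R^{1/d}/2)^d=R$, so the maximum modulus principle gives $q(\overline{\cD(0,R^{1/d}/2)})\subset\overline{\cD(0,R)}$, which is disjoint from $\cG$. Therefore no component of $q^{-1}(\cG)$ meets $\overline{\cD(0,R^{1/d}/2)}$, which is the lower bound on $|z|$ in the first inclusion.

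For the angular bounds, I use the asymptotic $q(z)=z^d(1+O(1/z))$ as $z\to\infty$. Writing $z=re^{i\theta}$, this gives $\arg q(z)=d\theta+O(1/r)$ modulo $2\pi$, and by inverting the relation along each ray $\arg z=2\pi k/d$ one sees that for $r$ sufficiently large, $q^{-1}([0,\infty))\cap\{|z|>2R^{1/d}\}$ consists of precisely $d$ real-analytic arcs, one asymptotic to each such ray, and there is $c_0>0$ with $|\arg z-2\pi k/d|<c_0/|z|$ on these arcs. Pick $c\ge c_0$. For each $j\in\{1,\ldots,d\}$, the open sector $\cT_j:=\{z:|z|>2R^{1/d},\,2(j-1)\pi/d+c/|z|<\arg z<2j\pi/d-c/|z|\}$ is connected, lies in $\{|q(z)|>R\}$ by the lower estimate of \eqref{eq_qEstimate}, and avoids $q^{-1}([0,\infty))$, so $\cT_j\subset q^{-1}(\cG)$ and is contained in a single component, which I define to be $\cS_j$; this proves the inner inclusion. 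Conversely, since the $d$ components of $q^{-1}(\cG)$ are separated in $\{|z|>2R^{1/d}\}$ by these $d$ arcs, any point of $\cS_j$ with $|z|>2R^{1/d}$ must have argument in $(2(j-1)\pi/d-c_0/|z|,2j\pi/d+c_0/|z|)$; finally, for the annulus $R^{1/d}/2<|z|\le 2R^{1/d}$, enlarging $c$ makes the outer angular bound trivially satisfied.

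The main delicate step is the argument asymptotics: extracting a single uniform constant $c$ for which the $d$ branches of $q^{-1}([0,\infty))$ outside $\cD(0,2R^{1/d})$ live in the prescribed thin sectors, and confirming that no extra branches appear. Both facts come from a routine perturbation-of-$z^d$ analysis, but the bookkeeping must be explicit enough to yield a single constant $c$ that works uniformly in $|z|$.
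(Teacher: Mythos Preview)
Your argument is correct and follows essentially the same route as the paper: simple connectivity of $\cG$ together with the absence of critical values gives the $d$ conformal sheets, the modulus estimate \eqref{eq_qEstimate} handles the radial bounds, and the asymptotic $\arg q(z)=d\arg z+O(1/|z|)$ controls the angular bounds. The paper's proof is more terse (it simply records the argument asymptotic and concludes), whereas you spell out the analysis of $q^{-1}([0,\infty))$ and invoke the maximum principle explicitly; these are clarifications rather than a different method.
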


\begin{proof}
Since $\cG$ is simply connected and contains no critical values of $q$, its preimage $q^{-1}(\cG)$ consists of $d$ components, and $q$ maps each of them conformally onto $\cG$. By \eqref{eq_qEstimate}, 
$$q\left(\cD\left(0,\frac{1}{2}R^{1/d}\right)\right)\subset \cD(0,R)$$
and
$$q\left(\dC\setminus \cD\left(0,2R^{1/d}\right)\right)\subset \dC\setminus \cD(0,R).$$
Also, for $z\in\dC$, we have
$$\arg(q(z))=\arg\left[z^d\left(1+\cO\left(\frac{1}{z}\right)\right)\right] 
\equiv d\arg(z)+\arg\left(1+\cO\left(\frac{1}{z}\right)\right)
\equiv d\arg(z)+\cO\left(\frac{1}{z}\right)$$
mod $2\pi$ as $z\to\infty$.
Thus,
$$\arg(z)\equiv\frac{\arg(q(z))}{d}+\cO\left(\frac{1}{z}\right)\mod\frac{2\pi}{d}$$
as $z\to\infty$. Using that $q$ is surjective, we obtain the desired conclusion.
\end{proof}

For $j\in\{1,...,d\}$, let $\vi_j$ be the branch of $q^{-1}$ defined in $\cG$ with $\vi_j(\cG)=\cS_j$.


\section{The asymptotics of \texorpdfstring{$g$ and $f$}{g and f}}  \label{sec_asymptotics}

In this section, we give asymptotic representations for $g(\vi_j(w)),g(z),f(\vi_j(w)),f(z)$.
Let
\begin{equation}   \label{eq_lambda}
\lambda:=\frac{d-1-m}{d}.
\end{equation}
Then 
\begin{equation}  \label{eq_pq'}
\frac{p(z)}{q'(z)}=z^{-\lambda d}\left(1+O\left(\frac{1}{z}\right)\right)
\end{equation}
as $z\to\infty$ and, for $j\in\{1,...,d\}$,
\begin{equation}  \label{eq_pq'vi}
\left|\frac{p(\vi_j(w))}{q'(\vi_j(w))}\right|=|w|^{-\lambda}\left(1+O\left(\frac{1}{|w|^{1/d}}\right)\right)
\end{equation}
as $w\to\infty$ in $\cG$.

\begin{lemma}   \label{lemma_asymptotics_g}
Let $j\in\{1,...,d\}$. Then there exists $c_j\in\dC$ such that 
$$g(\varphi_j(w))=c_j+\frac{p(\vi_j(w))}{q'(\vi_j(w))}\left(1+\frac{\lambda}{w}+O\left(\frac{1}{|w|^{1+1/d}}\right)\right)e^w$$
as $w\to\infty$ in $\cG$.
\end{lemma}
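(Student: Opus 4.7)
The plan is to exploit the chain-rule identity
$$\frac{d}{dw}\,g(\varphi_j(w)) \;=\; g'(\varphi_j(w))\,\varphi_j'(w) \;=\; \frac{p(\varphi_j(w))}{q'(\varphi_j(w))}\,e^{w},$$
since $\varphi_j'(w)=1/q'(\varphi_j(w))$. Writing $v(\zeta):=p(\varphi_j(\zeta))/q'(\varphi_j(\zeta))$, the lemma reduces to an asymptotic expansion of $\int v(\zeta)\,e^{\zeta}\,d\zeta$ in $\cG$, of Laplace/Watson type but in the variable $\zeta$ rather than $t$, obtained by repeated integration by parts.

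First I would show that $c_j:=\lim_{T\to\infty}g(\varphi_j(-T))$ exists. On the negative real axis one has $|v(t)\,e^{t}|\le C|t|^{-\lambda}e^{t}$ by \eqref{eq_pq'vi}, which is absolutely integrable at $-\infty$, so $g\circ\varphi_j$ is Cauchy as $t\to-\infty$ along $\dR$. Next I integrate by parts three times: for $w\in\cG$ and $T>R$, along a path in $\cG$,
\begin{equation*}
g(\varphi_j(w)) - g(\varphi_j(-T)) \;=\; \bigl[(v-v'+v'')(\zeta)\,e^{\zeta}\bigr]_{-T}^{w} \;-\; \int_{-T}^{w} v'''(\zeta)\,e^{\zeta}\,d\zeta,
\end{equation*}
where primes denote $d/d\zeta$. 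Differentiating $v=(p/q')\circ\varphi_j$ and using that $p/q'$ is a rational function with leading behaviour $t^{\,m+1-d}$, together with $t^{d}=\zeta(1+O(|t|^{-1}))$ and $|t|\asymp|\zeta|^{1/d}$, yields
$$\frac{v'(\zeta)}{v(\zeta)} \;=\; -\frac{\lambda}{\zeta}+O(|\zeta|^{-1-1/d}), \quad \frac{v''(\zeta)}{v(\zeta)}=O(|\zeta|^{-2}), \quad v'''(\zeta)=O(|\zeta|^{-\lambda-3})$$
as $\zeta\to\infty$ in $\cG$. Letting $T\to\infty$ the boundary terms at $-T$ vanish and $g(\varphi_j(-T))\to c_j$; the $O(|w|^{-2})$ contribution from $v''(w)/v(w)$ is absorbed into $O(|w|^{-1-1/d})$ because $d\ge 1$, and the main term collapses to $v(w)(1+\lambda/w+O(|w|^{-1-1/d}))\,e^{w}$.

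The main work is the uniform remainder bound
$$\left|\int_{\gamma_w} v'''(\zeta)\,e^{\zeta}\,d\zeta\right| \;=\; O(|w|^{-\lambda-2})\,|e^{w}| \;=\; v(w)\cdot O(|w|^{-1-1/d})\,e^{w},$$
where $\gamma_w$ is any path in $\cG$ from $-\infty$ (along the negative real axis) to $w$; since $\cG$ is simply connected and the integrand is holomorphic, the value is path-independent. For $w\in\cG$ with $|w|$ large and $\im w\ne 0$, I choose $\gamma_w$ to consist of (a) the real half-line $(-\infty,-|w|-1]$, (b) the circular arc $\{|\zeta|=|w|+1\}$ from $-|w|-1$ to $(|w|+1)e^{i\arg w}$ traversed through the half-plane containing $w$ (so as to avoid the positive real axis), and (c) the radial segment from $(|w|+1)e^{i\arg w}$ to $w$. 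On (b)--(c) we have $|\zeta|\asymp|w|$, length $O(|w|)$, and $|e^{\zeta}|\le e\,|e^{w}|$, because the maximum of $\re\zeta$ on the arc is attained at its endpoint where $\re\zeta\le\re w+1$; so these pieces contribute $O(|w|^{-\lambda-2})\,|e^{w}|$. Piece (a) contributes something exponentially small in $|w|$ thanks to the factor $e^{t}$. When $\im w=0$ one has $w<-R$ and the direct real segment suffices.

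Main obstacle: the contour estimate has to be uniform in $w\in\cG$. A naive horizontal approach $\{\im\zeta=\im w\}$ produces only the weaker bound $O(|\im w|^{-\lambda-2})\,|e^{w}|$, which fails badly when $|\re w|\gg|\im w|$ --- indeed, it is $O(1)$ when $|\im w|$ stays bounded while $|w|\to\infty$. The circular-arc contour above bypasses this by enforcing $|\zeta|\asymp|w|$ throughout, while the condition $\re\zeta\le\re w+O(1)$ keeps $|e^{\zeta}|$ comparable to $|e^{w}|$; these two features together are exactly what is needed to extract the full decay $|w|^{-\lambda-2}$ from $v'''$.
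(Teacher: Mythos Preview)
Your proof is correct and follows essentially the same route as the paper: both arguments integrate $v(\zeta)e^{\zeta}$ by parts three times, compute $v'/v=-\lambda/\zeta+O(|\zeta|^{-1-1/d})$, and bound the remainder $\int v'''\,e^{\zeta}\,d\zeta$ by deforming to a circular arc of radius $\asymp|w|$ on which $\re\zeta\le\re w+O(1)$. The only cosmetic differences are that the paper uses the arc of radius exactly $|w|$ (so no radial segment (c) is needed) and defines $c_j$ via a fixed base point $x_0$ rather than as $\lim_{T\to\infty}g(\varphi_j(-T))$; these amount to the same constant. One small point: your phrase ``exponentially small in $|w|$'' for piece (a) is not by itself enough when $\re w$ is very negative (since then $|v(w)e^{w}|$ is also exponentially small); what you actually need---and what the paper makes explicit---is the inequality $e^{-|w|}\le|e^{w}|$, which turns the bound $O(|w|^{-\lambda-3}e^{-|w|})$ into $O(|w|^{-\lambda-3}|e^{w}|)$ and hence into the desired relative error.
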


In terms of $z=\vi_j(w)$, Lemma \ref{lemma_asymptotics_g} says the following.

\begin{cor}   \label{cor_asymptotics_g}
For $j\in\{1,...,d\}$, we have
$$g(z)=c_j+\frac{p(z)}{q'(z)}\left(1+\frac{\lambda}{z^d}+O\left(\frac{1}{z^{d+1}}\right)\right)e^{q(z)}$$
as $z\to\infty$ in $\cS_j$.
\end{cor}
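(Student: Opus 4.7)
The corollary is purely a reformulation of Lemma \ref{lemma_asymptotics_g} under the change of variable $w = q(z)$, equivalently $z = \varphi_j(w) \in \cS_j$. So my plan is to substitute and check that the two error terms expressed in $w$ translate to the claimed error terms expressed in $z$; no new analytic input is required.

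First, I would observe that as $z \to \infty$ within $\cS_j$, the image $w = q(z)$ tends to $\infty$ within $\cG$, and conversely. Indeed, $\cS_j = \varphi_j(\cG)$, and by \eqref{eq_qEstimate} the modulus $|w| = |q(z)|$ is comparable to $|z|^d$ once $|z|$ is large enough. Hence the regime ``$w\to\infty$ in $\cG$'' of the lemma corresponds exactly to the regime ``$z\to\infty$ in $\cS_j$'' required by the corollary, and all $O$-terms are uniform in the respective set.

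Next I would convert the two error expressions. Since $q(z) = z^d + O(z^{d-1})$ as $z\to\infty$, a direct expansion gives
$$\frac{1}{w} = \frac{1}{q(z)} = \frac{1}{z^d}\left(1 + O\!\left(\tfrac{1}{z}\right)\right)^{-1} = \frac{1}{z^d} + O\!\left(\frac{1}{z^{d+1}}\right),$$
so that $\lambda/w = \lambda/z^d + O(1/z^{d+1})$. Using again that $|w| \asymp |z|^d$, we have $|w|^{1+1/d} \asymp |z|^{d+1}$, and therefore $O(1/|w|^{1+1/d}) = O(1/z^{d+1})$. Putting these into the bracket in Lemma \ref{lemma_asymptotics_g} gives
$$1 + \frac{\lambda}{w} + O\!\left(\frac{1}{|w|^{1+1/d}}\right) = 1 + \frac{\lambda}{z^d} + O\!\left(\frac{1}{z^{d+1}}\right).$$

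Finally, substituting $\varphi_j(w) = z$ and $e^w = e^{q(z)}$ into Lemma \ref{lemma_asymptotics_g} and combining with the factor $p(z)/q'(z)$ (which is already written in the variable $z$) yields exactly the asymptotic claimed in the corollary. Since the only nontrivial ingredient is the lemma itself, there is no real obstacle; the one point to be pedantic about is that the implicit constants are uniform on $\cS_j$, which is inherited from the uniformity on $\cG$ in the lemma together with the uniform comparability $|w|\asymp |z|^d$ from \eqref{eq_qEstimate}.
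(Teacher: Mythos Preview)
Your proposal is correct and matches the paper's approach: the paper does not give a separate proof of the corollary but simply states it as the reformulation of Lemma \ref{lemma_asymptotics_g} under the substitution $z=\varphi_j(w)$. You have just spelled out explicitly the routine check that the error terms $\lambda/w + O(1/|w|^{1+1/d})$ become $\lambda/z^d + O(1/z^{d+1})$ via $|w|\asymp |z|^d$ and $q(z)=z^d(1+O(1/z))$, which is exactly what the paper leaves implicit.
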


\begin{proof}[Proof of Lemma \ref{lemma_asymptotics_g}]
Let $x_0\in (-\infty,-R)=\cG\cap(-\infty,0]$ and $w\in\cG$. Then
\begin{equation}
\begin{split}
g(\vi_j(w))&=\int_0^{\vi_j(w)}p(t)e^{q(t)}\,dt+c\\
&=\int_{\vi_j(x_0)}^{\vi_j(w)}p(t)e^{q(t)}\,dt+\int_0^{\vi_j(x_0)}p(t)e^{q(t)}\,dt+c\\
&=\int_{\vi_j(x_0)}^{\vi_j(w)}p(t)e^{q(t)}\,dt+g(\vi_j(x_0))\\
&=\int_{x_0}^w\vi_j'(s)p(\vi_j(s))e^s\,ds+g(\vi_j(x_0)).
\end{split}
\end{equation}

Let 
$$r(s):=\vi_j'(s)p(\vi_j(s))=\frac{p(\vi_j(s))}{q'(\vi_j(s))}.$$
Repeated integration by parts yields
\begin{equation}
\int_{x_0}^wr(s)e^s\,ds=\left(r(s)-r'(s)+r''(s)\right)e^s\Big|_{x_0}^w-\int_{x_0}^wr'''(s)e^s\,ds.
\end{equation}
We have
\begin{equation}
\begin{split}
r'(s)&=\vi_j'(s)\frac{q'(\vi_j(s))p'(\vi_j(s))-q''(\vi_j(s))p(\vi_j(s))}{q'(\vi_j(s))^2}\\
&=\left(\frac{1}{q'(\vi_j(s))}\cdot\frac{p(\vi_j(s))}{q(\vi_j(s))}\right)\cdot\left(\frac{q(\vi_j(s))}{p(\vi_j(s))}\cdot\frac{q'(\vi_j(s))p'(\vi_j(s))-q''(\vi_j(s))p(\vi_j(s))}{q'(\vi_j(s))^2}\right)\\
&=\frac{p(\vi_j(s))}{q'(\vi_j(s))s}\cdot\frac{q(\vi_j(s))q'(\vi_j(s))p'(\vi_j(s))/p(\vi_j(s))-q(\vi_j(s))q''(\vi_j(s))}{q'(\vi_j(s))^2}\\
&=\frac{r(s)}{s}\cdot\frac{m-(d-1)}{d}\left(1+O\left(\frac{1}{|s|^{1/d}}\right)\right)\\
&=-\frac{\lambda}{s}r(s)\left(1+O\left(\frac{1}{|s|^{1/d}}\right)\right).
\end{split}
\end{equation}
Also, a computation shows that
$$r''(s)=r(s)O\left(\frac{1}{s^2}\right)\quad\text{and}\,\,r'''(s)=r(s)O\left(\frac{1}{s^3}\right)$$
as $s\to\infty$. With $h(x_0):=(r(x_0)-r'(x_0)+r''(x_0))e^{x_0}$, we obtain
$$\int_{x_0}^wr(s)e^s\,ds=r(w)e^w\left(1+\frac{\lambda}{w}+O\left(\frac{1}{|w|^{1+1/d}}\right)\right)-h(x_0)-\int_{x_0}^wr'''(s)e^s\,ds.$$
We have
$$\int_{x_0}^wr'''(s)e^s\,ds=\int_{-|w|}^wr'''(s)e^s\,ds+\int_{-\infty}^{-|w|}r'''(s)e^s\,ds-\int_{-\infty}^{x_0}r'''(s)e^s\,ds.$$
To estimate $\int_{-|w|}^wr'''(s)e^s\,ds$, let $\gamma$ be the part of the circle with centre $0$ and radius $|w|$ that connects $-|w|$ and $w$ in $\cG$. Then $\re s\le\re w$ for $s\in\gamma$. We obtain
\begin{equation}
\begin{split}
\left|\int_{-|w|}^wr'''(s)e^s\,ds\right|&\le\length(\gamma)\cdot\max_{s\in\gamma}\left|r'''(s)e^s\right|\le O(|w|)|r(w)|O\left(\frac{1}{|w|^3}\right)e^{\re w}\\
&=|r(w)|O\left(\frac{1}{|w|^2}\right)|e^w|.
\end{split}
\end{equation}
Let us now estimate $\int_{-\infty}^{-|w|}r'''(s)e^s\,ds$. By \eqref{eq_pq'vi}, we have $|r(s)|\sim |s|^{-\lambda}$ as $|s|\to\infty$. First suppose that $\lambda\ge0$.  Using that $r'''(s)=r(s)O(1/s^3)$, we obtain
\begin{equation}
\begin{split}
\left|\int_{-\infty}^{-|w|}r'''(s)e^s\,ds\right|&\le |r(w)|e^{-|w|}O\left(\frac{1}{|w|^3}\right)\int_{-\infty}^{-|w|}e^{s+|w|}\,ds\\
&\le|r(w)e^w|O\left(\frac{1}{|w|^3}\right)\int_{-\infty}^0e^s\,ds=|r(w)e^w|O\left(\frac{1}{|w|^3}\right).
\end{split}
\end{equation}
Now suppose that $\lambda<0$. Then
$$\left|\int_{-\infty}^{-|w|}r'''(s)e^s\,ds\right|\le O\left(\frac{1}{|w|^3}\right)\int_{-\infty}^{-|w|}|s|^{-\lambda}e^s\,ds.$$
Integration by parts yields
$$\int_{-\infty}^{-|w|}|s|^{-\lambda}e^s\,ds=O(|w|^{-\lambda}e^{-|w|})\le O(|r(w)e^w|)$$
and hence
$$\int_{-\infty}^{-|w|}r'''(s)e^s\,ds=r(w)e^wO\left(\frac{1}{|w|^3}\right).$$
Altogether, we obtain the desired conclusion with
$$c_j=g(\vi_j(x_0))-h(x_0)+\int_{-\infty}^{x_0}r'''(s)e^s\,ds.\qedhere$$
\end{proof}

For the function $f$ from Newton's method for $g$, Lemma \ref{lemma_asymptotics_g} yields the following.

\begin{cor} \label{cor_asymptotics_f}
For $j\in\{1,...,d\}$, we have
$$f(\vi_j(w))=\vi_j(w)-\frac{1}{q'(\vi_j(w))}\left(1+\frac{\lambda}{w}+O\left(\frac{1}{|w|^{1+1/d}}\right)\right)-\frac{c_je^{-w}}{p(\vi_j(w))}$$
as $w\to\infty$ in $\cG$.
\end{cor}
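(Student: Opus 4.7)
The proof will be a short direct computation: substitute the asymptotic formula from Lemma \ref{lemma_asymptotics_g} into the definition $f(z)=z-g(z)/g'(z)$ and simplify.

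The key observation is that $g'(z)=p(z)e^{q(z)}$ by the fundamental theorem of calculus applied to the definition \eqref{eq_g}. Specializing at $z=\vi_j(w)$, and using that $q(\vi_j(w))=w$ by definition of $\vi_j$ as a branch of $q^{-1}$, we obtain
\begin{equation}
g'(\vi_j(w))=p(\vi_j(w))\,e^w.
\end{equation}
Dividing the formula from Lemma \ref{lemma_asymptotics_g} by this expression yields
\begin{equation}
\frac{g(\vi_j(w))}{g'(\vi_j(w))}=\frac{c_j\,e^{-w}}{p(\vi_j(w))}+\frac{1}{q'(\vi_j(w))}\left(1+\frac{\lambda}{w}+O\left(\frac{1}{|w|^{1+1/d}}\right)\right),
\end{equation}
and subtracting from $\vi_j(w)$ gives exactly the claimed expression for $f(\vi_j(w))$.

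There is no real obstacle here; the statement is simply the translation of Lemma \ref{lemma_asymptotics_g} into a statement about $f$ via the identity $g'=pe^q$. The only thing to take care of is that all terms make sense as $w\to\infty$ in $\cG$: by \eqref{eq_pq'vi} the quantity $1/q'(\vi_j(w))$ decays like $|w|^{-\lambda-1/d+\text{(correction)}}$ while $c_je^{-w}/p(\vi_j(w))$ is controlled through $|e^{-w}|=e^{-\re w}$, and both the error term absorbed into $O(|w|^{-1-1/d})$ and the main term originate from Lemma \ref{lemma_asymptotics_g}, so nothing new needs to be verified.
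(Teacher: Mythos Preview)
Your proof is correct and is exactly the intended derivation: the paper states this result as an immediate corollary of Lemma~\ref{lemma_asymptotics_g} without giving a separate proof, and the computation you wrote out---using $g'(\vi_j(w))=p(\vi_j(w))e^w$ and dividing---is precisely the implicit step. The final paragraph about decay rates is unnecessary (and the remark about $1/q'(\vi_j(w))$ is slightly off, since $|q'(\vi_j(w))|\sim d|w|^{(d-1)/d}$), but this does not affect the argument.
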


In terms of $z$, Corollary \ref{cor_asymptotics_f} says the following.

\begin{cor}   \label{cor_asymptotics_fz}
For $j\in\{1,...,d\}$, we have
$$f(z)=z-\frac{1}{q'(z)}\left(1+\frac{\lambda}{z^d}+O\left(\frac{1}{|z|^{d+1}}\right)\right)-\frac{c_je^{-q(z)}}{p(z)}$$
as $z\to\infty$ in $\cS_j$.
\end{cor}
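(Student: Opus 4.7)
The plan is to deduce this corollary directly from Corollary \ref{cor_asymptotics_f} by performing the change of variables $z = \vi_j(w)$, i.e.\ $w = q(z)$. First I would observe that by Lemma \ref{lemma_change_of_var}, $\vi_j$ is a conformal bijection from $\cG$ onto $\cS_j$, and the estimate \eqref{eq_qEstimate} gives $|q(z)|\asymp |z|^d$. Consequently $z\to\infty$ in $\cS_j$ is equivalent to $w = q(z)\to\infty$ in $\cG$, so Corollary \ref{cor_asymptotics_f} is applicable and, after substitution, yields the claimed shape of $f(z)$ except that the remainder terms and the inner quantity $1/w$ are still expressed through $w$.

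The remaining step is to rewrite the $w$-asymptotics in terms of $z$. Since $q(z) = z^d + O(z^{d-1})$ as $z\to\infty$, one has
$$\frac{1}{w} = \frac{1}{q(z)} = \frac{1}{z^d}\left(1+O\!\left(\frac{1}{z}\right)\right) = \frac{1}{z^d} + O\!\left(\frac{1}{z^{d+1}}\right),$$
so the term $\lambda/w$ becomes $\lambda/z^d$ up to an $O(1/|z|^{d+1})$ error. Likewise, $|w|^{1+1/d} \asymp |z|^{d+1}$, hence $O(1/|w|^{1+1/d}) = O(1/|z|^{d+1})$. Combining these two observations inside the parenthesised factor of Corollary \ref{cor_asymptotics_f} collapses its $w$-error into the single error $O(1/|z|^{d+1})$ claimed in the statement, while the exponential term $e^{-w}$ becomes $e^{-q(z)}$ without any further work.

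I do not anticipate any genuine obstacle here: the corollary is, strictly speaking, the $z$-coordinate restatement of Corollary \ref{cor_asymptotics_f}. The only point requiring a moment of care is checking that the weaker $w$-remainder $O(1/|w|^{1+1/d})$ really does fit into the cleaner $O(1/|z|^{d+1})$; but this is exactly what $|w|\asymp |z|^d$ forces, since it converts the exponent $1+1/d$ in $|w|$ to the exponent $d+1$ in $|z|$.
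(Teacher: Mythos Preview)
Your proposal is correct and matches the paper's approach: the paper treats this corollary as an immediate restatement of Corollary \ref{cor_asymptotics_f} via the substitution $w=q(z)$, without giving any further proof. Your verification that $\lambda/w$ and $O(1/|w|^{1+1/d})$ become $\lambda/z^d+O(1/|z|^{d+1})$ under $|w|\asymp|z|^d$ is exactly the routine check needed.
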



\section{Partitioning the plane}   \label{sec_partition}

For a more detailed study of the behaviour of $f\circ\vi_j$, we will divide the complex plane into several sets depending on how large $|e^{-w}|$ is compared to some power of $|w|$.
More precisely, we consider sets whose boundary points satisfy
\begin{equation}  \label{eq_boundarypoints}
\re w=\mu\log|w|-\log\alpha
\end{equation}
for certain $\mu\in\dR$ and $\alpha>0$. Such sets were also considered by Jankowski \cite{jankowski96}. In this section, we will show that given $\mu\in\dR$, $\alpha>0$ and $y\in\dR$ of sufficiently large modulus, there is a unique $x_y\in\dR$ such that $w=x_y+iy$ satisfies \eqref{eq_boundarypoints}. We also give a proof of several properties of the mapping $y\mapsto x_y$ which in part are also shown in \cite[§3.3.4]{jankowski96}.

\begin{lemma}  \label{lemma_part_exist}
Let $\mu\in\dR$, $\alpha>0$ and $y\in\dR$ with $|y|\ge2|\mu|$. Then there exists a unique $x_y\in\dR$ with
\begin{equation}   \label{eq_part_xy}
x_y =\mu\log|x_y+iy|-\log\alpha.
\end{equation}
If $x>x_y$, then 
\begin{equation}  \label{eq_incr1}
x>\mu\log|x+iy|-\log\alpha.
\end{equation}
If $x<x_y$, then 
\begin{equation}  \label{eq_incr2}
x<\mu\log|x+iy|-\log\alpha.
\end{equation}
\end{lemma}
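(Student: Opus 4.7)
The plan is to rewrite the defining relation as a zero of a single real function of $x$ and show that this function is strictly increasing with opposite-signed limits at $\pm\infty$, whence existence and uniqueness follow from the intermediate value theorem and the inequalities \eqref{eq_incr1}, \eqref{eq_incr2} from monotonicity.

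Concretely, I would fix $y$ with $|y|\ge 2|\mu|$ and define
$$h(x):=x-\mu\log|x+iy|+\log\alpha=x-\frac{\mu}{2}\log(x^2+y^2)+\log\alpha.$$
Then \eqref{eq_part_xy} is the equation $h(x_y)=0$, and \eqref{eq_incr1}, \eqref{eq_incr2} are equivalent to $h(x)>0$ for $x>x_y$ and $h(x)<0$ for $x<x_y$. The case $\mu=0$ is trivial, so assume $\mu\ne 0$; then $|y|\ge 2|\mu|>0$ so $x+iy\ne 0$ and $h$ is smooth on $\dR$.

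The key step is the strict monotonicity of $h$. Differentiating gives
$$h'(x)=1-\frac{\mu x}{x^2+y^2}.$$
By the AM--GM inequality, $x^2+y^2\ge 2|x||y|$, so $|x|/(x^2+y^2)\le 1/(2|y|)$ (trivially true at $x=0$), and hence
$$\left|\frac{\mu x}{x^2+y^2}\right|\le\frac{|\mu|}{2|y|}\le\frac{1}{4}$$
using the hypothesis $|y|\ge 2|\mu|$. Thus $h'(x)\ge 3/4>0$ for all $x\in\dR$, so $h$ is strictly increasing.

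For existence, I note that $\log|x+iy|=O(\log|x|)$ as $|x|\to\infty$, so the linear term dominates and $h(x)\to+\infty$ as $x\to+\infty$ and $h(x)\to-\infty$ as $x\to-\infty$. The intermediate value theorem provides a unique zero $x_y$, and strict monotonicity immediately gives \eqref{eq_incr1} and \eqref{eq_incr2}. I don't expect any real obstacle; the only mildly delicate point is making sure the bound $|\mu x/(x^2+y^2)|\le |\mu|/(2|y|)$ (which uses the hypothesis $|y|\ge 2|\mu|$) is presented cleanly so that the strict inequality $h'(x)>0$ is visibly uniform in $x$.
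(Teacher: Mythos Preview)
Your proposal is correct and follows essentially the same approach as the paper: define the auxiliary function, show its derivative is bounded below by a positive constant via an elementary estimate on $\mu x/(x^2+y^2)$, and combine strict monotonicity with the limits at $\pm\infty$. The only cosmetic difference is that you use the AM--GM bound $x^2+y^2\ge 2|x||y|$ to get $h'\ge 3/4$, whereas the paper uses $x^2+y^2\ge\max\{x^2,y^2\}$ to get $\vi'\ge 1/2$; either suffices.
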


\begin{proof}
Let $\vi:\dR\to\dR$,
$$\vi(x)=x-\mu\log|x+iy|=x-\frac{\mu}{2}\log(x^2+y^2).$$
Then $\vi(x)\to\infty$ as $x\to\infty$, and $\vi(x)\to-\infty$ as $x\to-\infty$. Thus, $\vi$ is surjective, so there exists $x_y$ satisfying \eqref{eq_part_xy}.

Also,
$$\vi'(x)=1-\frac{\mu x}{x^2+y^2}.$$
Since
$$\frac{|\mu x|}{x^2+y^2}\le|\mu|\frac{\max\{|x|,|y|\}}{\max\{x^2,y^2\}}=\frac{|\mu|}{\max\{|x|,|y|\}}\le\frac{1}{2},$$
we have
$$\vi'(x)\ge\frac{1}{2}.$$
Thus, $\vi$ is strictly increasing, which implies \eqref{eq_incr1} and \eqref{eq_incr2}. In particular, $\vi$ is injective, so $x_y$ is unique.
\end{proof}

For $\mu\in\dR$ and $\alpha>0$, let
$$\gamma_{\mu,\alpha}:(-\infty,-2|\mu|]\cup[2|\mu|,\infty)\to\dR,\,\gamma_{\mu,\alpha}(y)=x_y.$$

\begin{lemma}   \label{lemma_gamma}
Let $\mu\in\dR$ and $\alpha>0$. 
\begin{enumerate}[(i)]
\item The function $\gamma_{\mu,\alpha}$ is continuously differentiable.
\item If $\mu>0$, then  $\lim_{|y|\to\infty}\gamma_{\mu,\alpha}(y)=\infty.$
If $\mu<0$, then
$\lim_{|y|\to\infty}\gamma_{\mu,\alpha}(y)=-\infty.$ For $\mu=0$, $\gamma_{\mu,\alpha}\equiv-\log\alpha$.
\item $|\gamma_{\mu,\alpha}'(y)|\le2|\mu|/|y|$. In particular, $\lim_{|y|\to\infty}\gamma_{\mu,\alpha}'(y)=0$ uniformly in $\alpha$.
\item For $\alpha>\beta>0$, we have
$$\frac{2}{3}\log\frac{\alpha}{\beta}\le\gamma_{\mu,\beta}(y)-\gamma_{\mu,\alpha}(y)\le2\log\frac{\alpha}{\beta}$$
and
$$\lim_{|y|\to\infty}(\gamma_{\mu,\beta}(y)-\gamma_{\mu,\alpha}(y))=\log\frac{\alpha}{\beta}.$$
\end{enumerate}
\end{lemma}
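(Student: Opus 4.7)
My plan is to treat the four parts in order, using the defining equation $F(x,y) := x - \mu\log|x+iy| + \log\alpha = 0$ and the key inequality $|\mu x|/(x^2+y^2) \le 1/2$ that was already established in the proof of Lemma~\ref{lemma_part_exist}.

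For part (i), I would apply the implicit function theorem to $F$ on the open set $\{(x,y): |y| > 2|\mu|\}$. The partial derivative $\partial_x F(x,y) = 1 - \mu x/(x^2+y^2)$ is bounded below by $1/2$, in particular nonzero, so $\gamma_{\mu,\alpha}$ is continuously differentiable.

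For part (ii), I would feed $x_y = \gamma_{\mu,\alpha}(y)$ back into the defining relation. Since $|x_y + iy| \ge |y|$, if $\mu > 0$ then $x_y \ge \mu\log|y| - \log\alpha \to \infty$. If $\mu < 0$, the same inequality yields $x_y \le \mu\log|y| - \log\alpha \to -\infty$. For $\mu = 0$, the defining equation forces $x_y = -\log\alpha$ identically.

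For part (iii), I differentiate $F(\gamma_{\mu,\alpha}(y),y) = 0$ implicitly to obtain
\begin{equation}
\gamma_{\mu,\alpha}'(y) = \frac{\mu y/(\gamma_{\mu,\alpha}(y)^2+y^2)}{1 - \mu\gamma_{\mu,\alpha}(y)/(\gamma_{\mu,\alpha}(y)^2+y^2)}.
\end{equation}
The denominator is $\ge 1/2$, and the numerator is at most $|\mu|/|y|$ in modulus since $\gamma_{\mu,\alpha}(y)^2+y^2 \ge y^2$. Combining gives $|\gamma_{\mu,\alpha}'(y)| \le 2|\mu|/|y|$, which also yields the uniform limit in $\alpha$ as $|y|\to\infty$.

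Part (iv) is where I expect the only mild subtlety. Writing the defining equations for $\alpha$ and $\beta$, subtracting, and setting $\Delta(y) := \gamma_{\mu,\beta}(y) - \gamma_{\mu,\alpha}(y)$, I get
\begin{equation}
\Delta(y) = \mu\bigl(\log|\gamma_{\mu,\beta}(y)+iy| - \log|\gamma_{\mu,\alpha}(y)+iy|\bigr) + \log(\alpha/\beta).
\end{equation}
Applying the mean value theorem to $x \mapsto \mu\log|x+iy|$ on the real interval with endpoints $\gamma_{\mu,\alpha}(y)$ and $\gamma_{\mu,\beta}(y)$ produces a real $\xi = \xi(y)$ between them with
\begin{equation}
\Delta(y) = \frac{\mu\xi}{\xi^2+y^2}\Delta(y) + \log(\alpha/\beta),
\end{equation}
so that $\Delta(y) = \log(\alpha/\beta)/(1 - \mu\xi/(\xi^2+y^2))$. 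Since $|\mu\xi|/(\xi^2+y^2) \le 1/2$, the denominator lies in $[1/2, 3/2]$, which gives the two-sided bound $\tfrac23\log(\alpha/\beta) \le \Delta(y) \le 2\log(\alpha/\beta)$. For the limit, the two-sided bound together with part (ii) forces $\xi(y)$ to grow at most logarithmically in $|y|$, hence $\mu\xi/(\xi^2+y^2) \to 0$ and therefore $\Delta(y) \to \log(\alpha/\beta)$ as $|y|\to\infty$. The only potential trap is verifying the $\xi(y) = O(\log|y|)$ claim in the case $\mu \ne 0$, but this is immediate from $|\xi(y)| \le \max(|\gamma_{\mu,\alpha}(y)|, |\gamma_{\mu,\beta}(y)|)$ and the bound $|\gamma_{\mu,\alpha}(y)| \le |\mu|\log|\gamma_{\mu,\alpha}(y)+iy| + |\log\alpha|$ from the defining equation.
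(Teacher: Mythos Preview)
Your argument is correct. Part (iv) is essentially identical to the paper's proof: both apply the mean value theorem to $x\mapsto x-\mu\log|x+iy|$ (equivalently to $x\mapsto\mu\log|x+iy|$) and use the bound $|\mu\xi|/(\xi^2+y^2)\le 1/2$. One minor inefficiency: for the limit you work to show $\xi(y)=O(\log|y|)$, whereas the paper simply observes $|\mu\xi|/(\xi^2+y^2)\le|\mu|/\max(|\xi|,|y|)\le|\mu|/|y|\to 0$, which needs no information about $\xi$ at all.

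For parts (i)--(iii) you take a genuinely different route. The paper rewrites the defining equation as $y^2=\alpha^{2/\mu}e^{(2/\mu)x}-x^2=:\psi(x)$, shows $\psi$ is eventually strictly increasing, and obtains $\gamma_{\mu,\alpha}(y)=\psi^{-1}(y^2)$ explicitly; (i)--(iii) then follow from elementary estimates on $\psi'$. You instead apply the implicit function theorem directly to $F(x,y)=x-\mu\log|x+iy|+\log\alpha$ using the bound $\partial_x F\ge 1/2$ already established in Lemma~\ref{lemma_part_exist}, and compute $\gamma_{\mu,\alpha}'$ by implicit differentiation. Your approach is arguably more streamlined since it avoids the case split on the sign of $\mu$ and the auxiliary function $\psi$; the paper's approach has the slight advantage of giving an explicit formula for the inverse.
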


\begin{proof}
For $\mu=0$, the results are obvious. We will prove the Lemma for $\mu>0$, the proof for $\mu<0$ is analogous. To prove (i)-(iii), note that the condition
$$x=\mu\log|x+iy|-\log\alpha$$
is equivalent to
$$y^2=\alpha^{2/\mu}e^{(2/\mu) x}-x^2.$$
The function 
$$\psi(x)=\alpha^{2/\mu}e^{(2/\mu) x}-x^2$$
 satisfies 
\begin{equation}   \label{eq_part_psi}
\lim_{x\to-\infty}\psi(x)=-\infty\quad\text{and}\,\,\lim_{x\to\infty}\psi(x)=\infty.
\end{equation}
Let $x_0:=\max\{x:\,\psi(x)=4\mu^2\}$. Then $\psi(x)>4\mu^2$ for $x>x_0$. Also,
\begin{equation}    \label{eq_part_psiprime}
\psi'(x)=\frac{2}{\mu}\alpha^{2/\mu}e^{(2/\mu) x}-2x=\frac{2}{\mu}(\psi(x)+x^2-\mu x).
\end{equation}
It is not difficult to see that
$x^2-\mu x\ge-\mu^2/4$ for all $x\in\dR$. Thus,
\begin{equation}  \label{eq_part_deriv}
\psi'(x)\ge\frac{2}{\mu}\left(\psi(x)-\frac{\mu^2}{4}\right)>0
\end{equation}
 for $x>x_0$. In particular, $\psi:[x_0,\infty)\to[4\mu^2,\infty)$ is bijective. This implies that 
$$\gamma_{\mu,\alpha}(y)=\psi^{-1}(y^2)$$
is a continuously differentiable function. By \eqref{eq_part_psi}, (ii) is satisfied. Also, by \eqref{eq_part_deriv} and since $y^2\ge4\mu^2$, we have
\begin{equation}
|\gamma_{\mu,\alpha}'(y)|=\left|\frac{2y}{\psi'(\psi^{-1}(y^2))}\right|\le\frac{2|y|}{(2/\mu)(\psi(\psi^{-1}(y^2))-\mu^2/4)}=\frac{\mu |y|}{y^2-\mu^2/4}\le\frac{2\mu}{|y|},
\end{equation}
that is, (iii) is satisfied. To prove (iv), let $y\in\dR$ with $|y|\ge2\mu$ be fixed, and let $\vi$ be as in the proof of Lemma \ref{lemma_part_exist}. Let $x_1:=\gamma_{\mu,\alpha}(y)$ and $x_2:=\gamma_{\mu,\beta}(y).$ Then by the mean value theorem,
$$\log\frac{\alpha}{\beta}=\vi(x_2)-\vi(x_1)=\vi'(\xi)(x_2-x_1)$$
for some $\xi\in[x_2,x_1]$. In the proof of Lemma \ref{lemma_part_exist}, we have seen that $\vi'(\xi)\ge1/2$, and the same arguments show that $\vi'(\xi)\le3/2$. Also, $\vi'(\xi)\to1$ as $|y|\to\infty$. 
\end{proof}

For $\mu\in\dR,\,\alpha>0$ and $\nu\ge2|\mu|$, define
\begin{equation}
\begin{split}
\cH(\mu,\alpha,\nu)&:=\{w:\,\re w\ge\mu\log|w|-\log(\alpha),\,|\im w|\ge\nu\}\\
&=\{x+iy:\,|y|\ge\nu,\,x\ge\gamma_{\mu,\alpha}(y)\}.
\end{split}
\end{equation}

Also, let 
\begin{equation}
\begin{split}
\Gamma(\mu,\alpha)&:=\{w:\,|\im w|\ge 2|\mu|,\,\re w=\mu\log|w|-\log\alpha\}\\
&=\{\gamma_{\mu,\alpha}(y)+iy:\,|y|\ge2|\mu|\}.
\end{split}
\end{equation}

\begin{remark}   \label{remark_gamma}
Note that if $w\in\Gamma(\mu,\alpha)$, then
$$|e^{-w}|=e^{-\re w}=\alpha|w|^{-\mu};$$
if $w\in\cH(\mu,\alpha,\nu)$, then
$$|e^{-w}|\le\alpha|w|^{-\mu};$$
and if $w\in\dC\setminus\cH(\mu,\alpha,\nu)$ with $|\im w|\ge\nu$, then
$$|e^{-w}|>\alpha|w|^{-\mu}.$$
\end{remark}


\section{The singular values of \texorpdfstring{$f$}{f}}   \label{sec_singularities}

Recall that 
$$g(z)=\int_0^zp(t)e^{q(t)}\,dt+c$$
where $p(t)=t^d+O(t^{d-1})$ and $q(t)=dt^m+O(t^{m-1})$ as $t\to\infty$, and $f$ is the Newton map corresponding to $g$. Let us assume throughout the rest of the paper that $g$ and $f$ satisfy the assumptions of Theorem \ref{thm_main}.

In this section, we determine the location of the singular values of $f$.

\begin{lemma}
\textnormal{\cite[§7, p.238]{bergweiler93}} The function $f$ does not have finite asymptotic values.
\end{lemma}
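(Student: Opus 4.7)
The plan is to argue by contradiction using the sharp asymptotics in each sector $\cS_j$. Suppose $a\in\dC$ is a finite asymptotic value of $f$, realised along a path $\gamma:[0,\infty)\to\dC$ with $\gamma(t)\to\infty$ and $f(\gamma(t))\to a$. The first step is to reduce to the case where $\gamma(t)$ eventually lies in a single sector $\cS_j$. On $q^{-1}([0,\infty))\cap\{|z|>2R^{1/d}\}$ we have $q(z)\in[0,\infty)$ with $q(z)\to+\infty$, and Corollary \ref{cor_asymptotics_g} then gives $g(z)/g'(z)\to0$, so $f(z)\to\infty$. Consequently $\gamma$ cannot visit $q^{-1}([0,\infty))$ at arbitrarily large modulus; since the $\cS_j$ are exactly the components of the complement of this set outside a compact disk, $\gamma$ must eventually remain in one fixed sector $\cS_j$.

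Next I would plug into Corollary \ref{cor_asymptotics_fz}. Using $f(\gamma(t))-\gamma(t)=a-\gamma(t)+o(1)$ and $1/q'(\gamma(t))\to0$, the expansion collapses to
$$\frac{c_je^{-q(\gamma(t))}}{p(\gamma(t))}=\gamma(t)\bigl(1+\eta(t)\bigr),\qquad \eta(t)\to0,$$
which in particular forces $c_j\neq0$, for otherwise the left side would be identically zero while the right tends to $\infty$.

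The concluding step is to take logarithms continuously along $\gamma$. Because $\cS_j$ has angular aperture $2\pi/d$, the argument of $\gamma(t)$ stays in a bounded interval, so a continuous branch $L(t)$ of $\log\bigl(\gamma(t)p(\gamma(t))/c_j\bigr)$ has bounded imaginary part and real part $(m+1)\log|\gamma(t)|+O(1)$. Writing
$$-q(\gamma(t))=L(t)+\log\bigl(1+\eta(t)\bigr)+2\pi ik(t),$$
the left side and $L(t)+\log(1+\eta(t))$ are continuous in $t$, so $k(t)$ is an integer-valued continuous function on a connected interval, hence constant. Therefore $|q(\gamma(t))|=O(\log|\gamma(t)|)$, which contradicts the polynomial lower bound $|q(\gamma(t))|\geq(1/2^d)|\gamma(t)|^d$ from \eqref{eq_qEstimate}.

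The main obstacle I anticipate is the reduction to a single sector: one must verify that an asymptotic path cannot oscillate infinitely often between distinct sectors at arbitrarily large modulus, which rests on the uniform statement that $f(z)\to\infty$ on $q^{-1}([0,\infty))$ as $|z|\to\infty$. Once that reduction is in place, the remaining bookkeeping is routine manipulation of the established asymptotics together with the observation that a continuous branch of logarithm along a path with bounded argument has bounded imaginary part.
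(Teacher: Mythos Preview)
The paper does not give its own proof of this lemma; it simply cites Bergweiler \cite[§7, p.238]{bergweiler93}. Your argument is a correct, self-contained proof that uses only the asymptotics already established in Section~\ref{sec_asymptotics} of the present paper.

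One small point worth tightening in the write-up: in the reduction step you invoke Corollary~\ref{cor_asymptotics_g} (equivalently Corollary~\ref{cor_asymptotics_fz}) on the boundary curves $q^{-1}([0,\infty))$, whereas those corollaries are stated only for $z\in\cS_j$. The extension is harmless---the estimates in the proof of Lemma~\ref{lemma_asymptotics_g} are uniform on $\cG$ and pass to the closure by continuity, and on the boundary $\re q(z)\to+\infty$ forces $e^{-q(z)}\to0$, so $f(z)=z+o(1)\to\infty$---but it deserves an explicit sentence. Once that is said, the rest of your argument (ruling out $c_j=0$, taking a continuous logarithm along the path, and comparing $|q(\gamma(t))|=O(\log|\gamma(t)|)$ against the polynomial lower bound \eqref{eq_qEstimate}) is clean and correct.
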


So each singular value of $f$ in $\dC$ must be a critical value or a limit point of critical values. We have
$$f'(z)=\frac{g(z)g''(z)}{g'(z)^2}.$$
Thus, the critical points of $f$ are:
\begin{enumerate}
\item the zeros of $g$ that are not zeros of $g'$. These are superattracting fixed points of $f$ and form a discrete subset of $\dC$.
\item the zeros of $g''$ that are not zeros of $g$ or $g'$. There are only finitely many of these, $z_1,...,z_N$, and by assumption, each $z_j$ is attracted by a periodic cycle.
\end{enumerate}
In particular, the set of critical values of $f$ does not have limit points in $\dC$. So every singular value of $f$ in $\dC$ is a critical value, and all but finitely many of them are superattracting fixed points.

\begin{lemma}   \label{lemma_sing_assumptions}
The set $\cP(f)\cap\cJ(f)$ is finite.
\end{lemma}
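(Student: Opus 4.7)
The plan is to determine $\cP(f)\cap\cJ(f)$ explicitly and observe that only finitely many points contribute. By the preceding lemma, $f$ has no finite asymptotic values, so every element of $\sing(f^{-1})\cap\dC$ is a critical value or a finite limit point of critical values. The discussion preceding the lemma identifies exactly two families of critical points: the simple zeros of $g$, which are superattracting fixed points lying in $\cF(f)$, and the finite set $\{z_1,\dots,z_N\}$. Since $g$ is entire, its zeros form a discrete subset of $\dC$ with no finite accumulation point, so $\sing(f^{-1})\cap\dC$ is itself discrete and given as the union of these two (at most countably infinite) sets.

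Next I would compute the forward orbits of the singular values. Each simple zero $\zeta$ of $g$ is a fixed point of $f$, so it contributes just $\{\zeta\}$ to $\cP(f)$. By the hypothesis of Theorem \ref{thm_main}, each $z_j$ is attracted to a periodic cycle $\cC_j$, so the closure of its forward orbit equals the orbit together with $\cC_j$. Taking the union and closure, one obtains
$$\cP(f)\cap\dC \;=\; \{\zeta\in\dC:g(\zeta)=0,\,g'(\zeta)\ne 0\}\,\cup\,\bigcup_{j=1}^{N}\bigl(\{f^n(z_j):n\ge 0\}\cup\cC_j\bigr).$$

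To conclude, I would intersect with $\cJ(f)$. The simple zeros of $g$ are (super)attracting fixed points, hence contained in $\cF(f)$, and contribute nothing. Each cycle $\cC_j$ is a finite set. For the orbit $\{f^n(z_j):n\ge 0\}$ two cases arise: either $z_j\in\cF(f)$, in which case the whole orbit lies in $\cF(f)$; or $z_j\in\cJ(f)$, in which case $\cC_j\subset\cJ(f)$ and Lemma \ref{lemma_convToCycle} forces some iterate of $z_j$ to land on $\cC_j$, so only a finite pre-periodic tail plus $\cC_j$ remains. In either case each $z_j$ contributes only finitely many points to $\cP(f)\cap\cJ(f)$, and summing over $j\in\{1,\dots,N\}$ gives the claim.

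I do not anticipate any substantive obstacle: the essential observation, already transparent from the paragraphs preceding the lemma, is that the only source of infinitely many singular values lies in $\cF(f)$ (the superattracting fixed points), so the problem reduces to controlling the orbits of the finitely many critical points $z_1,\dots,z_N$, which the hypothesis together with Lemma \ref{lemma_convToCycle} does for us.
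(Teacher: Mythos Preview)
Your proposal is correct and follows essentially the same approach as the paper's proof: both arguments observe that the superattracting fixed points lie in $\cF(f)$, so $\cP(f)\cap\cJ(f)$ is contained in the closure of $\cO^+(\{z_1,\dots,z_N\})$, and then use the hypothesis on the $z_j$ together with Lemma~\ref{lemma_convToCycle} to see that each such orbit contributes only finitely many points to $\cJ(f)$. The only (harmless) imprecision is that you identify $\{z_1,\dots,z_N\}$ rather than $\{f(z_1),\dots,f(z_N)\}$ as the contributing critical \emph{values}, but this does not affect the finiteness conclusion.
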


\begin{proof}
Since the superattracting fixed points of $f$ form a discrete subset of $\dC$, the set $\cP(f)\cap\cJ(f)$ is contained in the closure of $\cO^+(\{z_1,...,z_N\})$. Each $z_j$ is attracted by a periodic cycle $\cC$. In particular, $\cO^+(z_j)$ is bounded and has only finitely many limit points. If $z_j\in\cJ(f)$, then Lemma \ref{lemma_convToCycle} yields that $z_j$ is eventually mapped to $\cC$, so the forward orbit of $z_j$ is finite.
\end{proof}

By Corollary \ref{cor_asymptotics_g},
$$g(z)=c_j+\frac{p(z)}{q'(z)}\left(1+\frac{\lambda}{z^d}+O\left(\frac{1}{z^{d+1}}\right)\right)e^{q(z)}$$
as $z\to\infty$ in $\cS_j$, for $j\in\{1,...,d\}$. We will see later that if $c_j\ne0$, then $g$ has infinitely many zeros in $\cS_j$. It is easy to see that this cannot be the case for $c_j=0$. However, we will show now that under the assumptions of Theorem \ref{thm_main}, the case $c_j=0$ does not occur.

\begin{lemma}
If $c_j=0$ for some $j\in\{1,...,d\}$, then $f$ has a Baker domain.
\end{lemma}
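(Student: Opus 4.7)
Setting $c_j = 0$ in Corollary \ref{cor_asymptotics_fz} removes the exponentially small term and leaves
\[
f(z) = z - \frac{1}{q'(z)}\left(1 + O\!\left(\frac{1}{|z|^d}\right)\right) \qquad \text{as } z \to \infty \text{ in } \cS_j.
\]
The plan is to conjugate by the change of variables $w = q(z)$ and show that the induced map on $\cG$ is essentially the translation $w \mapsto w - 1$, then to exhibit a left half-plane that is iteratively driven to $-\infty$. This will produce a fixed Fatou component on which $f^n \to \infty$, which must be a Baker domain.

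First I would compute $F(w) := q(f(\vi_j(w)))$ on $\cG$. With $z = \vi_j(w)$ and $h := f(z) - z$, expanding the polynomial $q(z+h) = q(z) + \sum_{n=1}^{d} \frac{q^{(n)}(z)}{n!} h^n$ gives the $n=1$ term $q'(z) h = -1 + O(1/|w|)$ (using the asymptotic above and $z^d = w(1+O(1/|z|))$), while the terms with $n \geq 2$ satisfy
\[
|q^{(n)}(z) h^n| = O(|z|^{d-n}) \cdot O(|z|^{-n(d-1)}) = O(|w|^{-(n-1)})
\]
and are absorbed into $O(1/|w|)$. Hence
\[
F(w) = w - 1 + O(1/|w|) \qquad \text{as } w \to \infty \text{ in } \cG.
\]
Choose $M$ so large that $M > R$, the error term above is bounded by $1/2$ on $\{|w| \geq M\}$, and $|\vi_j(w)|$ exceeds the modulus of every zero of $p$. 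Then $H := \{w : \re w < -M\}$ is contained in $\cG$, does not meet the $q$-images of poles of $f$, and satisfies $F(H) \subset H$ because $\re F(w) \leq \re w - 1/2$. Iterating, $F^n(w) \to \infty$ for every $w \in H$.

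Now set $V := \vi_j(H) \subset \cS_j$. A brief angular check using $\arg z \equiv (\arg w)/d \pmod{2\pi/d}$ shows that for $w \in H$, $z = \vi_j(w)$ lies at distance $\gtrsim |z|$ from $\partial\cS_j$, so the small displacement $|f(z) - z| = O(|z|^{-(d-1)})$ keeps $f(z)$ inside $\cS_j$. Consequently $f(\vi_j(w)) = \vi_j(F(w))$ on $H$, giving $f(V) \subset V$ and $f^n \to \infty$ locally uniformly on the open connected set $V$. Therefore $V \subset \cF(f)$. If $U$ denotes the Fatou component containing $V$, the relation $f(V) \subset V \subset U$ combined with the fact that Fatou components are either disjoint or equal forces $f(U) \subset U$, so $U$ is a fixed Fatou component; normality together with the identity theorem then propagate $f^n \to \infty$ to all of $U$. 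Among the five classes of fixed Fatou components (immediate basin of attraction, parabolic domain, Baker domain, Siegel disk, Herman ring), only a Baker domain admits $f^n \to \infty$, so $U$ is a Baker domain. The main obstacle is the clean derivation of the asymptotic for $F$ and the verification of the branch identity $f(\vi_j(w)) = \vi_j(F(w))$; both become straightforward once the geometry of the sectors $\cS_j$ is tracked carefully.
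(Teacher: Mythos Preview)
Your argument is correct and self-contained. The paper takes a shorter route: after writing down the same asymptotic
\[
f(z)=z-\frac{1}{dz^{d-1}}+O\!\left(\frac{1}{z^{d}}\right)\qquad (z\to\infty\text{ in }\cS_j),
\]
it simply invokes the classical results of Fatou (and Hinkkanen's Theorem 2) which guarantee that a meromorphic map with this kind of parabolic behaviour at $\infty$ possesses an invariant domain on which the iterates tend to $\infty$, i.e.\ a Baker domain.

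What you do instead is unfold that black box in coordinates adapted to the problem: conjugating by $w=q(z)$ turns the asymptotic into $F(w)=w-1+O(1/|w|)$, after which a left half-plane is visibly forward invariant and pushed to $-\infty$. This is essentially the Leau--Fatou petal construction carried out by hand, and your Taylor expansion of $q(z+h)$ together with the Koebe-type check that $f(\vi_j(w))$ stays in $\cS_j$ are exactly the ingredients the paper later develops (for $c_j\neq 0$) in Lemmas \ref{lemma_invariance_Sj} and \ref{lemma_asymptotics_rechts}. Your approach costs more lines but avoids the external citation and meshes naturally with the change-of-variables machinery used throughout the paper; the paper's approach is terser but relies on the reader knowing the Fatou/Hinkkanen result. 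One small remark: your angular claim that $\dist(z,\partial\cS_j)\gtrsim|z|$ is stronger than needed and not literally uniform over all of $H$; the cleaner justification is the one you allude to via Koebe, using that $\dist(w,\partial\cG)\ge|w|-R$ for $w\in H$, which already suffices to absorb the $O(|z|^{-(d-1)})$ displacement.
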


\begin{proof}
If $c_j=0$, then Corollary \ref{cor_asymptotics_fz} yields that 
$$f(z)=z-\frac{1}{dz^{d-1}}+O\left(\frac{1}{z^d}\right)$$
as $z\to\infty$ in $\cS_j$. The claim now follows from \cite[§8, §11]{fatou19} (see also \cite[Theorem 2]{hinkkanen92}).
\end{proof}

\begin{cor}
If the assumptions of Theorem \ref{thm_main} are satisfied, then $c_j\ne0$ for all $j\in\{1,...,d\}$.
\end{cor}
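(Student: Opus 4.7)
The plan is to argue by contradiction, using the preceding lemma: if $c_j=0$ for some $j$, then $f$ has a periodic Baker domain $\cU$, and I would derive a contradiction with the hypothesis of Theorem \ref{thm_main} that every $z_k$ is attracted by a periodic cycle $\cC_k \subset \dC$.

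First, I would observe that under the Theorem \ref{thm_main} hypothesis every singular orbit of $f$ is bounded. Indeed, by the first lemma of this section $f$ has no finite asymptotic values, so each point of $\sing(f^{-1})$ is a critical value. The zeros of $g$ (which are not zeros of $g'$) are superattracting fixed points, so their orbits are singletons. For each remaining critical point $z_k \in \{z_1,\ldots,z_N\}$, the periodic cycle $\cC_k$ attracting it is a finite subset of $\dC$; if $z_k \in \cJ(f)$ then Lemma \ref{lemma_convToCycle} gives $f^n(z_k) \in \cC_k$ from some $n$ onwards, while if $z_k \in \cF(f)$ the orbit accumulates on the compact set $\cC_k$ and is therefore bounded. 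In particular, no orbit of a singular value accumulates at $\infty$.

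Second, I would invoke the classical principle that every cycle of Baker domains of a transcendental meromorphic function must absorb a singular orbit: there exist $k \in \{1,\ldots,N\}$ and integers $n_l \to \infty$ with $f^{n_l}(z_k) \in \cU$. Granted this, if $p$ is the period of $\cU$ then $f^{mp}|_\cU \to \infty$ as $m \to \infty$, so $f^{n_l+mp}(z_k) \to \infty$, contradicting the boundedness established above. This concludes $c_j \ne 0$.

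The main obstacle is justifying the absorption step, since the corresponding statement for attracting/parabolic cycles (the Fatou–Cremer theorem) does not transfer to Baker domains without effort. For the specific maps considered here, however, the argument is essentially contained in Bergweiler's proof in \cite{bergweiler93} excluding Baker domains, which I would adapt: his proof assumes the stronger condition that each $f^n(z_k)$ converges to a finite limit, but the argument only uses boundedness of the critical orbits, which is exactly what we have shown. Concretely, one works in the $w$-coordinate via Corollary \ref{cor_asymptotics_f}, where $q \circ f \circ \vi_j$ acts like the translation $w \mapsto w-1$ at infinity, constructs a Fatou coordinate on the Baker domain, and then shows that univalence of its iterated inverse branches on a right half-plane is incompatible with the distortion estimates coming from the explicit asymptotic, unless some critical value lies in $\cU$.
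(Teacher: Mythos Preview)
Your proposal is correct and follows the same route as the paper. The paper's proof simply cites Bergweiler's Theorem~2 from \cite{bergweiler93} (for these maps, every cycle of Baker domains of $f$ contains a singularity of $f^{-1}$), which is precisely the ``absorption step'' you identify as the crux; rather than adapting its proof, you could invoke it directly and then observe, as you do, that under the hypotheses of Theorem~\ref{thm_main} every singular value of $f$ has bounded forward orbit.
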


\begin{proof}
A theorem by Bergweiler \cite[Theorem 2]{bergweiler93} says that if $g$ and $f$ are defined by \eqref{eq_g} and \eqref{eq_f}, then every cycle of Baker domains of $f$ contains a singularity of $f^{-1}$. This cannot be true under the assumptions of Theorem \ref{thm_main}.
\end{proof}

We now investigate the location of the zeros of $g$. It turns out that the images under $q$ of all but finitely many of them are close to the curves $\Gamma(\lambda,1/|c_j|)$ defined in Section \ref{sec_partition}. More precisely, we have the following.

\begin{lemma}   \label{lemma_zeros}
For $j\in\{1,...,d\}$ and $k\in\dZ$, let $v_{j,k}\in\Gamma(\lambda,1/|c_j|)$ such that 
$$\im v_{j,k}=\begin{cases}\arg(-c_j)+\lambda(\pi/2+2\pi(j-1))+2k\pi&\text{if }k\ge0\\
\arg(-c_j)+\lambda(-\pi/2+2\pi j)+2k\pi&\text{if }k<0.\end{cases}$$
 If $z\in \cS_j$ is a zero of $g$ and $|z|$ is large, then there exists $k\in\dZ$ such that
\begin{equation}   \label{eq_zero}
q(z)=v_{j,k}+o(1)
\end{equation}
as $|z|\to\infty$. Vice versa, if $j\in\{1,...,d\}$ and $|k|$ is large, then $g$ has a zero $z\in \cS_j$ satisfying \eqref{eq_zero}.
\end{lemma}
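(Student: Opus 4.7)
The plan is to work in the coordinate $w = q(z)$. By Corollary \ref{cor_asymptotics_g}, and using that $c_j \ne 0$, for $z \in \cS_j$ with $|z|$ large the equation $g(z) = 0$ is equivalent to
$$\frac{p(z)}{q'(z)}\,\eta(z)\, e^{q(z)} = -c_j, \qquad \eta(z) := 1 + \frac{\lambda}{z^d} + O\!\left(\frac{1}{z^{d+1}}\right).$$
Taking moduli and applying \eqref{eq_pq'vi} gives $\re w = \lambda \log|w| - \log(1/|c_j|) + o(1)$, which by Lemma \ref{lemma_gamma}(iv) forces $\re w - \gamma_{\lambda,1/|c_j|}(\im w) = o(1)$ as $|w|\to\infty$. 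So in either direction it suffices to locate $\im w$ up to $o(1)$.

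For the first assertion, taking arguments in the zero equation, and using $\arg(\vi_j(w)) = (\arg w + 2\pi(j-1))/d + o(1)$ (for $w \in \cG$ with $\arg w \in (0,2\pi)$) together with $\arg(p(z)/q'(z)) = -\lambda d\arg z + o(1)$ (a single-valued identity because $\lambda d = d-1-m \in \dZ$), yields
$$\im w - \lambda(\arg w + 2\pi(j-1)) = \arg(-c_j) + 2\pi k + o(1)$$
for some $k \in \dZ$. Since $w$ lies close to $\Gamma(\lambda,1/|c_j|)$, where $|\re w| = O(\log|w|)$, one has $\arg w \to \pi/2$ when $\im w \to +\infty$ and $\arg w \to 3\pi/2$ when $\im w \to -\infty$. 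Substituting these limits, and rewriting $3\pi/2 + 2\pi(j-1) = -\pi/2 + 2\pi j$ in the second case, reproduces exactly the two piecewise formulas defining $\im v_{j,k}$; combined with the real-part estimate this gives $q(z) = v_{j,k} + o(1)$.

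For the reverse direction, reformulate as a fixed point problem: $g(\vi_j(w)) = 0$ iff $w = F_k(w)$ for some $k \in \dZ$, where
$$F_k(w) := \log(-c_j) - \log\frac{p(\vi_j(w))}{q'(\vi_j(w))} - \log \eta(\vi_j(w)) + 2\pi i k,$$
with branches of $\log$ chosen consistently with the argument analysis above. By the estimates from the first part, $F_k(v_{j,k}) - v_{j,k} = o(1)$ as $|k|\to\infty$; differentiating via the chain rule and using $\vi_j'(w) = 1/q'(\vi_j(w))$ together with $|q'(\vi_j(w))|$ growing like $|w|^{1-1/d}$ gives $F_k'(w) = O(1/w)$ uniformly on a neighbourhood of $v_{j,k}$. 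Hence for $|k|$ sufficiently large, $F_k$ is a contraction on a small disk around $v_{j,k}$ and admits a unique fixed point $w_k$ there with $w_k = v_{j,k} + o(1)$; pulling back by $\vi_j$ produces the desired zero of $g$ in $\cS_j$.

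The main technical obstacle is the bookkeeping of branches of $\arg$ and $\log$: one must verify that the integer $k$ emerging from the argument equation matches the $k$ labelling the fixed points of $F_k$, and that the two tails $\im w \to \pm\infty$ of $\Gamma(\lambda,1/|c_j|)$ reproduce the case split in the definition of $v_{j,k}$ faithfully so that, with the contraction argument, each sufficiently large $|k|$ yields exactly one zero.
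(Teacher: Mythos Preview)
Your argument for the first assertion (necessity) is essentially the same as the paper's: both proofs take moduli and arguments in the relation $e^{q(z)} = -c_j z^{d\lambda}(1+o(1))$, use $\arg q(z)\to\pm\pi/2$ on the curve $\Gamma(\lambda,1/|c_j|)$, and then track $\arg z$ through the branch structure of $\cS_j$ to pin down $\im q(z)$ modulo $2\pi$. Your explicit appeal to Lemma~\ref{lemma_gamma}(iv) to translate the real-part estimate into $\dist(w,\Gamma(\lambda,1/|c_j|))=o(1)$ is a clean touch that the paper leaves implicit.

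For the existence direction the two proofs diverge. The paper works directly with $g\circ\vi_j$: it defines a small box $\cG_{j,k}$ around $v_{j,k}$, computes that $(g\circ\vi_j)(v_{j,k})=o(1)$, and then estimates $|g\circ\vi_j|$ from below by a fixed positive constant on each of the four sides of $\partial\cG_{j,k}$; the minimum principle then forces a zero inside. Your approach instead rewrites $g(\vi_j(w))=0$ as the fixed-point equation $w=F_k(w)$ and shows $F_k$ is a contraction near $v_{j,k}$ via $F_k'(w)=O(1/w)$. Both are valid. Your route is arguably slicker and automatically gives local uniqueness of the zero, but at the cost of the branch-bookkeeping you flag: one must check that $p(\vi_j(w))/q'(\vi_j(w))$ and $\eta(\vi_j(w))$ stay away from zero on the relevant disk (which follows from \eqref{eq_pq'vi} and $\eta\to1$), and that the branch of $\log$ can be chosen so that $F_k(v_{j,k})=v_{j,k}+o(1)$ genuinely holds with the \emph{same} integer $k$ on both sides. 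The paper's minimum-principle argument sidesteps logarithms entirely, at the expense of four separate boundary estimates.
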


\begin{proof}
First suppose that $z\in \cS_j$ is a zero of $g$. By Corollary \ref{cor_asymptotics_g} and \eqref{eq_pq'},
$$g(z)=c_j+z^{-d\lambda}(1+o(1))e^{q(z)}$$
as $z\to\infty$, and hence
\begin{equation}  \label{eq_zeros}
e^{q(z)}=-c_jz^{d\lambda}(1+o(1)).
\end{equation}
Thus,
\begin{equation}
\begin{split}
\re q(z)&=\log\left|e^{q(z)}\right|=\log|c_j|+d\lambda\log|z|+o(1)\\
&=\log|c_j|+\lambda\log|q(z)|+o(1)\\
&=\lambda\log|q(z)|-\log\frac{1}{|c_j|}+o(1).
\end{split}
\end{equation}
In particular, $\re q(z)=o(|q(z)|)$ as $z\to\infty$ and hence
\begin{equation}  \label{eq_argq}
\arg q(z)=\pm\frac{\pi}{2}+o(1).
\end{equation}
Let us now assume that $\im q(z)>0$ and hence $\arg q(z)=\pi/2+o(1).$ The proof in the case where $\im q(z)<0$ is analogous.
By \eqref{eq_zeros},
\begin{equation}   \label{eq_zeros_im}
\im q(z)\equiv\arg(-c_j)+d\lambda\arg(z)+o(1) \mod 2\pi.
\end{equation}
We have 
 $$\arg q(z) =\arg(z^d(1+o(1)))\equiv d\arg z+o(1)\mod 2\pi$$
and hence
\begin{equation}
\arg z \equiv\frac{1}{d}\arg q(z)+o(1)\equiv\frac{\pi}{2d}+o(1)\mod\frac{2\pi}{d}.
\end{equation}
Since $z\in \cS_j$, this implies
\begin{equation} \label{eq_argz}
\arg z\equiv\frac{\pi}{2d}+\frac{2\pi(j-1)}{d}+o(1)\mod 2\pi.
\end{equation}
Inserting \eqref{eq_argz} into \eqref{eq_zeros_im} yields
$$\im q(z)\equiv\arg(-c_j)+\lambda\left(\frac{\pi}{2}+2\pi(j-1)\right)+o(1)\mod 2\pi.$$
This completes the proof of the first part of Lemma \ref{lemma_zeros}. 

Let us now prove the second part. As before, we will give the proof  only for $k>0$, the proof for $k<0$ is analogous. Recall that $\vi_j$ is the branch of $q^{-1}$ that maps $\dC\setminus (\overline{\cD(0,R)}\cup [0,\infty))$ onto $\cS_j$. For small $\epsi>0$, let
$\cG_{j,k}$ be the interior of the set of all 
$$v\in \cH\left(\lambda,\,\frac{1+\varepsilon}{|c_j|},\,2|\lambda|\right)\setminus \cH\left(\lambda,\frac{1-\epsi}{|c_j|},\,2|\lambda|\right)$$
satisfying
$$|\im v-\im v_{j,k}|<\pi.$$
We will use the minimum principle to show that $g\circ \vi_j$ has a zero in $\cG_{j,k}$. For $v\in \cG_{j,k}$, we have $\re(v)=o(|v|)$, and hence
$$\arg(v)=\frac{\pi}{2}+o(1)$$
as $|v|\to\infty$.  Similar arguments as above and the definition of $v_{j,k}$ yield
\begin{equation}   \label{eq_argv}
\arg(\vi_j(v))\equiv\frac{\pi}{2d}+\frac{2\pi(j-1)}{d}+o(1)\equiv\frac{\arg(-c_j)-\im(v_{j,k})}{-d\lambda}+o(1)\mod \frac{2\pi}{d\lambda}.
\end{equation}
In particular, this is true for $v=v_{j,k}$. Also, since $v_{j,k}\in\Gamma(\lambda,1/|c_j|)$, we have
$$e^{-\re v_{j,k}} =\frac{1}{|c_j|}|v_{j,k}|^{-\lambda}=\frac{1}{|c_j|}|\vi_j(v_{j,k})|^{-d\lambda}(1+o(1)),$$
that is,
$$|\vi_j(v_{j,k})|^{-d\lambda}=|c_j|e^{-\re v_{j,k}}(1+o(1)).$$
Using Lemma \ref{lemma_asymptotics_g} and \eqref{eq_pq'}, we obtain
\begin{equation}  \label{eq_almostZero}
\begin{split}
(g\circ\vi_j)(v_{j,k})&=c_j+\vi_j(v_{j,k})^{-d\lambda}(1+o(1))\exp(v_{j,k})\\
&=c_j+|\vi_j(v_{j,k})|^{-d\lambda}\exp(-id\lambda\arg(\vi_j(v_{j,k})))(1+o(1))\exp(v_{j,k})\\
&=c_j+|c_j|\exp(-\re v_{j,k})\exp(i(\arg(-c_j)-\im(v_{j,k})))(1+o(1))\exp(v_{j,k})\\
&=c_j-c_j(1+o(1))=o(1).
\end{split}
\end{equation}
Next, we will show that $g\circ\vi_j$ is bounded below on $\partial \cG_{j,k}$.

If $v\in\Gamma(\lambda,(1+\epsi)/|c_j|)$, then
\begin{equation}
\begin{split}
|(g\circ\vi_j)(v)|&=|c_j+\vi_j(v)^{-d\lambda}(1+o(1))e^v|\ge||c_j|-|v|^{-\lambda}e^{\re v}(1+o(1))|\\
&=\left||c_j|-\frac{|c_j|}{1+\epsi}(1+o(1))\right|=\left|\frac{\epsi|c_j|}{1+\epsi}-o(1)\right|\ge\frac{\epsi|c_j|}{2},
\end{split}
\end{equation}
provided $\epsi$ is sufficiently small and $k$ is sufficiently large. An analogous estimate yields that if $v\in\Gamma(\lambda,(1-\epsi)/|c_j|)$, then
$$|(g\circ\vi_j)(v)|\ge\left|\frac{|c_j|}{1-\epsi}(1+o(1))-|c_j|\right|\ge\epsi|c_j|,$$
provided $\epsi$ is sufficiently small and $k$ is sufficiently large. If $\im(v)=\im(v_{j,k})\pm \pi$, then by \eqref{eq_argv},
$$\arg(\vi_j(v)^{-d\lambda}e^v)\equiv\arg(-c_j)\pm\pi+o(1)\equiv\arg(c_j)+o(1)\mod 2\pi.$$
Thus, for $v\in\overline{\cG_{j,k}}$ with $\im v=\im v_{j,k}\pm \pi$, we have
\begin{equation}
\begin{split}
|(g\circ\vi_j)(v)|&=\left|c_j+\vi_j(v)^{-d\lambda}(1+o(1))e^v\right|\\
&=\left||c_j|\exp(i\arg(c_j))+|v|^{-\lambda}|e^v|\exp(i\arg c_j+o(1))(1+o(1))\right|\\
&=\left||c_j|+|v|^{-\lambda}|e^v|(1+o(1))\right|\ge|c_j|.
\end{split}
\end{equation}
We obtain that if $k$ is sufficiently large, then
$$|(g\circ\vi_j)(v_{j,k})|=o(1)<\min_{v\in\partial \cG_{j,k}}|v|.$$
By the minimum principle, $g\circ \vi_j$ has a zero $w\in\cG_{j,k}$. The first part of the Lemma yields that $z:=\vi_j(w)$ satisfies \eqref{eq_zero}.
\end{proof}

\begin{cor}  \label{cor_zeros}
Let $j\in\{1,...,d\}$ and let $z\in \cS_j$ be a zero of $g$. Then 
$$\arg(z)=\begin{cases}\pi/(2d)+2\pi(j-1)/d+o(1)&\text{if }\im q(z)>0\\  -\pi/(2d)+2\pi j/d+o(1) &\text{if }\im q(z)<0\end{cases}$$
as $|z|\to\infty$. In particular,
$$\dist(z,\partial \cS_j)\ge\left(\frac{1}{d}+o(1)\right)|z|$$
as $|z|\to\infty$.
\end{cor}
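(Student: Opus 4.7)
The corollary should follow by simply unpacking the conclusion of Lemma~\ref{lemma_zeros} and combining it with the angular description of $\cS_j$ from Lemma~\ref{lemma_change_of_var}; no further function-theoretic input is needed.

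First I would argue that if $z\in\cS_j$ is a zero of $g$ with $|z|$ large, Lemma~\ref{lemma_zeros} gives $k\in\dZ$ with $q(z)=v_{j,k}+o(1)$. Since $v_{j,k}\in\Gamma(\lambda,1/|c_j|)$, the definition of $\Gamma$ yields $\re v_{j,k}=\lambda\log|v_{j,k}|-\log(1/|c_j|)=o(|v_{j,k}|)$, and the same is therefore true of $\re q(z)$. Combined with the sign hypothesis on $\im q(z)$, this forces $\arg q(z)=\pi/2+o(1)$ or $\arg q(z)=-\pi/2+o(1)$ according as $\im q(z)>0$ or $\im q(z)<0$.

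Next, from $q(z)=z^d(1+O(1/z))$ I would extract $\arg q(z)\equiv d\arg z+o(1)\pmod{2\pi}$, so that
$$\arg z\equiv\frac{\arg q(z)}{d}+o(1)\pmod{\frac{2\pi}{d}}.$$
By Lemma~\ref{lemma_change_of_var}, $\arg z$ lies in the interval $(2\pi(j-1)/d-c/|z|,\,2\pi j/d+c/|z|)$, which for $|z|$ large contains exactly one representative of each residue class modulo $2\pi/d$. Splitting on the sign of $\im q(z)$, the unique representatives are $\pi/(2d)+2\pi(j-1)/d$ and $-\pi/(2d)+2\pi j/d$ respectively, giving the two asymptotics of the statement.

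For the distance bound, I would appeal once more to Lemma~\ref{lemma_change_of_var}: on the scale $|z|\to\infty$, $\partial\cS_j$ is contained in a set where $\arg$ differs from one of the rays $\{\arg=2\pi(j-1)/d\}$, $\{\arg=2\pi j/d\}$ by at most $c/|z|=o(1)$. In either of the two cases above, the angular separation of $z$ from the closer bounding ray is $\pi/(2d)+o(1)$, so the Euclidean distance is at least $|z|\sin(\pi/(2d)+o(1))$. Using the elementary inequality $\sin t\ge 2t/\pi$ on $[0,\pi/2]$ to bound $\sin(\pi/(2d))\ge 1/d$ then yields $\dist(z,\partial\cS_j)\ge(1/d+o(1))|z|$. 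Since the hard work is already carried out in Lemma~\ref{lemma_zeros}, there is no substantive obstacle here; the only point to handle carefully is pinning down the correct representative of $\arg z$ modulo $2\pi/d$ inside the sector $\cS_j$.
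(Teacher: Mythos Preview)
Your proposal is correct and follows essentially the same route as the paper: the first part is exactly the derivation \eqref{eq_argq}--\eqref{eq_argz} carried out inside the proof of Lemma~\ref{lemma_zeros} (the paper simply cites those equations rather than repeating the argument), and for the distance bound both you and the paper use that the angular separation from $\partial\cS_j$ is $\pi/(2d)+o(1)$ together with $\sin(\pi/(2d))\ge 1/d$.
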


\begin{proof}
The first part is stated in \eqref{eq_argz} in the case where $\im q(z)>0$, and follows from \eqref{eq_argq} with similar arguments in the case where $\im q(z)<0$. We obtain
$$\dist(z,\partial\cS_j)=\sin\left(\frac{\pi}{2d}+o(1)\right)|z|\ge\left(\frac{1}{d}+o(1)\right)|z|.\qedhere$$
\end{proof}


\section{The set \texorpdfstring{$q(\cF(f))$}{q(F(f))}: first part}  \label{sec_q(F)1}

For $j\in\{1,...,d\}$, let
$$\cF_j:=\cF(f)\cap \cS_j.$$
In Section \ref{sec_q(F)1}-\ref{sec_q(F)3}, we will investigate the location and size of $q(\cF_j)$ in three different subsets of $\dC$, using the sets $\cH(\mu, \alpha,\nu)$ introduced in Section \ref{sec_partition}. The first subset is $\cH(\lambda, 1/|c_j|, \nu)$,  the second one is $\cH(\lambda-1,\alpha_1/|c_j|,\nu)\setminus\cH(\lambda,\beta_1/|c_j|,\nu)$ for small $\alpha_1>0$ and large $\beta_1>0$, and the third set is $\{w:\,|\im w|\ge\nu\}\setminus\cH(\lambda-1,\beta_2/|c_j|,\nu)$ for large $\beta_2>0$. See Figure \ref{fig_partition_overview} for an illustration of these sets.
 
 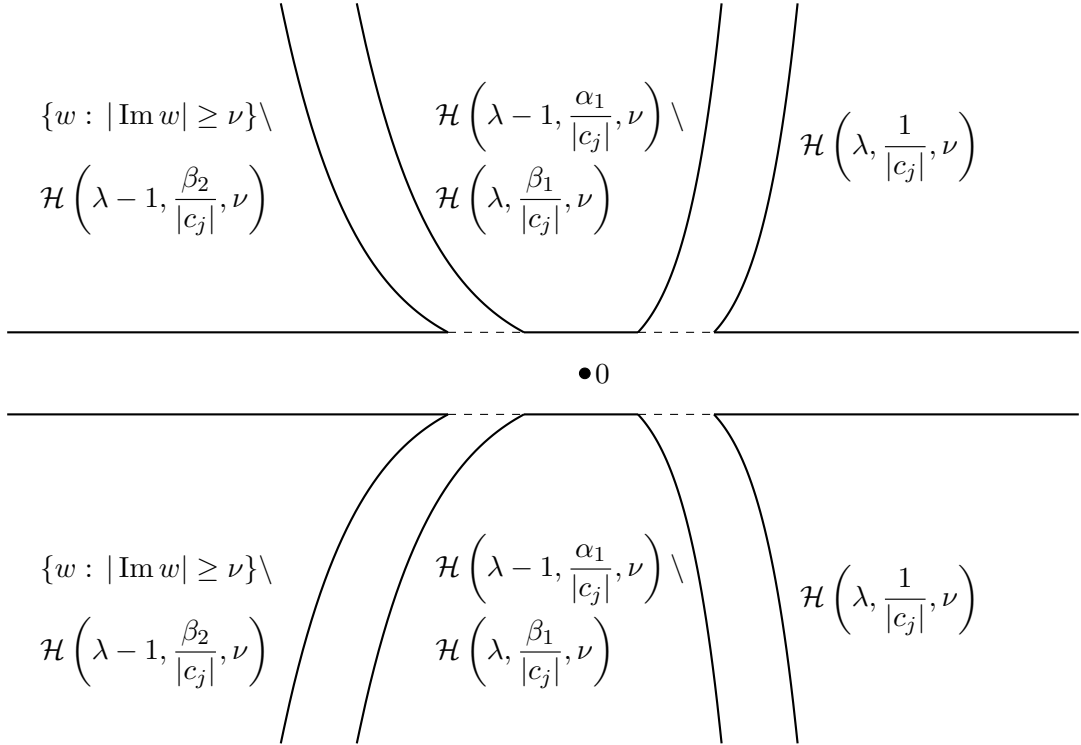
\begin{figure}[ht]
 \centering
 \begin{tikzpicture}
 \draw[thick, smooth, samples = 20, domain = -3.2:-1] plot(\x, {0.2*exp(-\x)});
 \draw[thick, smooth, samples = 20, domain = -4.2:-2] plot(\x, {0.2*exp(-(\x+1))});
 \draw[thick, smooth, samples = 20, domain = 0.5:1.6] plot(\x, {0.2*exp(2*\x)});
 \draw[thick, smooth, samples = 20, domain = 1.5:2.6] plot(\x, {0.2*exp(2*(\x-1))});

 \draw[thick] (-7.8, 0.544) -- (-2, 0.544);
 \draw[dashed] (-2, 0.544) -- (-1, 0.544);
 \draw[thick] (-1, 0.544) -- (0.5, 0.544);
 \draw[dashed] (0.5, 0.544) -- (1.5, 0.544);
 \draw[thick] (1.5, 0.544) -- (6.3, 0.544);
 
  \draw[thick] (-7.8, -0.544) -- (-2, -0.544);
 \draw[dashed] (-2, -0.544) -- (-1, -0.544);
 \draw[thick] (-1, -0.544) -- (0.5, -0.544);
 \draw[dashed] (0.5, -0.544) -- (1.5, -0.544);
 \draw[thick] (1.5, -0.544) -- (6.3, -0.544);
 
  \draw[thick, smooth, samples = 20, domain = -3.2:-1] plot(\x, {-0.2*exp(-\x)});
 \draw[thick, smooth, samples = 20, domain = -4.2:-2] plot(\x, {-0.2*exp(-(\x+1))});
 \draw[thick, smooth, samples = 20, domain = 0.5:1.6] plot(\x, {-0.2*exp(2*\x)});
 \draw[thick, smooth, samples = 20, domain = 1.5:2.6] plot(\x, {-0.2*exp(2*(\x-1))});
 
 \filldraw (-0.2,0) circle (2pt) node[right]{$0$};
 
 \draw (-7.5, 3.4) node[right] {$\{w:\,|\im w|\ge\nu\}\setminus$};
 \draw (-7.5, 2.3) node[right] {$\cH\left(\lambda-1,\dfrac{\beta_2}{|c_j|},\nu\right)$};
 \draw (-2.3, 3.4) node[right] {$\cH\left(\lambda-1,\dfrac{\alpha_1}{|c_j|},\nu\right)\setminus$};
 \draw (-2.3, 2.3) node[right] {$\cH\left(\lambda,\dfrac{\beta_1}{|c_j|},\nu\right)$};
 \draw (2.5,3) node[right] {$\cH\left(\lambda,\dfrac{1}{|c_j|},\nu\right)$};
 
 \draw (-7.5, -2.6) node[right] {$\{w:\,|\im w|\ge\nu\}\setminus$};
 \draw (-7.5, -3.7) node[right] {$\cH\left(\lambda-1,\dfrac{\beta_2}{|c_j|},\nu\right)$};
 \draw (-2.3, -2.6) node[right] {$\cH\left(\lambda-1,\dfrac{\alpha_1}{|c_j|},\nu\right)\setminus$};
 \draw (-2.3, -3.7) node[right] {$\cH\left(\lambda,\dfrac{\beta_1}{|c_j|},\nu\right)$};
 \draw (2.5,-3) node[right] {$\cH\left(\lambda,\dfrac{1}{|c_j|},\nu\right)$};
 
 \end{tikzpicture}
 \caption{An illustration of the sets $\cH(\lambda,1/|c_j|,\nu)$, $\cH(\lambda-1,\alpha_1/|c_j|,\nu)\setminus\cH(\lambda,\beta_1/|c_j|,\nu)$ and $\{w:\,|\im w|\ge\nu\}\setminus\cH(\lambda-1,\beta_2/|c_j|,\nu)$ in the case where $\lambda>0$.}
 \label{fig_partition_overview}
 \end{figure}

 In this section, we investigate the location and size of $q(\cF_j)$ in $\cH(\lambda, 1/|c_j|, \nu)$ for $j\in\{1,...,d\}$ and large $\nu>0$. Recall that the branch $\vi_j$ of $q^{-1}$ maps $\cH(\lambda, 1/|c_j|, \nu)$ to a subset of $\cS_j$.
 
 \begin{lemma}    \label{lemma_invariance_Sj}
Let  $j\in\{1,...,d\}$. There exists $\nu_0>0$ such that
$$(f\circ\vi_j)(\cH(\lambda,1/|c_j|,\nu_0))\subset \cS_j.$$
In particular, if $(q\circ f\circ\vi_j)^k(w)\in\cH(\lambda,1/|c_j|,\nu_0)$ for all $k\in\{0,...,n-1\}$, then $(f^n\circ\vi_j)(w)\in\cS_j$ and $(q\circ f\circ\vi_j)^n(w)=(q\circ f^n\circ\vi_j)(w)$.
\end{lemma}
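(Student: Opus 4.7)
My plan is to use the asymptotic expansion of $f \circ \vi_j$ from Corollary \ref{cor_asymptotics_f} together with Koebe's distortion theorem to show that $\vi_j(w)$ lies far enough inside $\cS_j$ that the small correction $f(\vi_j(w)) - \vi_j(w)$ cannot push it out, provided $\nu_0$ is sufficiently large. By Corollary \ref{cor_asymptotics_f},
$$f(\vi_j(w)) - \vi_j(w) = -\frac{1}{q'(\vi_j(w))}\left(1 + \frac{\lambda}{w} + O(|w|^{-1-1/d})\right) - \frac{c_j e^{-w}}{p(\vi_j(w))}.$$
Since $|\vi_j(w)| \sim |w|^{1/d}$, the asymptotics of $p$ and $q'$ give $1/|q'(\vi_j(w))| \sim |w|^{1/d-1}/d$ and $1/|p(\vi_j(w))| \sim |w|^{-m/d}/d$. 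For $w \in \cH(\lambda, 1/|c_j|, \nu_0)$, Remark \ref{remark_gamma} yields $|e^{-w}| \le |w|^{-\lambda}/|c_j|$, so combined with the identity $\lambda + m/d = 1-1/d$ this bounds the second summand by $|w|^{1/d-1}/d \cdot (1+o(1))$. Altogether $|f(\vi_j(w)) - \vi_j(w)| \le C|w|^{1/d-1}$ for $|w|$ large, with a constant $C$ depending only on $p, q, c_j$ and not on $\nu_0$.

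To bound $\dist(\vi_j(w), \partial \cS_j)$ from below, observe that for $w \in \cH(\lambda, 1/|c_j|, \nu_0)$ with $\nu_0 > 2R$ the disk $\cD(w, |\im w|/2)$ avoids both $[0, \infty)$ (it stays in one open half-plane) and $\overline{\cD(0,R)}$ (since $|w| - |\im w|/2 \ge |w|/2 \ge \nu_0/2 > R$); hence $\cD(w, |\im w|/2) \subset \cG$. Applying Koebe's $1/4$-theorem (Lemma \ref{lemma_koebe}) to the univalent map $\vi_j$ on this disk gives
$$\dist(\vi_j(w), \partial \cS_j) \ge \frac{|\im w|}{8}|\vi_j'(w)| = \frac{|\im w|}{8|q'(\vi_j(w))|} \ge \frac{\nu_0}{9d}|w|^{1/d-1}$$
for $|w|$ large. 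Choosing $\nu_0$ so that $\nu_0/(9d) > C$ (and $\nu_0 > 2R$, $\nu_0 \ge 2|\lambda|$) therefore forces $f(\vi_j(w)) \in \cS_j$ for all $w \in \cH(\lambda, 1/|c_j|, \nu_0)$ with $|w|$ sufficiently large; since every point of $\cH(\lambda, 1/|c_j|, \nu_0)$ has $|w| \ge \nu_0$, further enlarging $\nu_0$ disposes of any remaining cases automatically.

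The ``in particular'' claim follows by induction on $n$. If $w_k := (q \circ f \circ \vi_j)^k(w) \in \cH(\lambda, 1/|c_j|, \nu_0)$ for $k = 0, \ldots, n-1$, and inductively $\vi_j(w_k) = (f^k \circ \vi_j)(w)$, then the first part of the lemma gives $f(\vi_j(w_k)) \in \cS_j$, so $\vi_j \circ q$ acts as the identity there and
$$\vi_j(w_{k+1}) = \vi_j(q(f(\vi_j(w_k)))) = f(\vi_j(w_k)) = (f^{k+1} \circ \vi_j)(w).$$
Applying $q$ then identifies $(q \circ f \circ \vi_j)^{k+1}(w)$ with $(q \circ f^{k+1} \circ \vi_j)(w)$, completing the induction. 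The main technical obstacle is establishing uniform control of the error terms in Corollary \ref{cor_asymptotics_f} and of $|\vi_j'|$ as $w \to \infty$ inside $\cH(\lambda, 1/|c_j|, \nu_0)$, but this is clean because the asymptotics developed in Sections \ref{sec_changeOfVar} and \ref{sec_asymptotics} are uniform in the direction of approach to infinity within $\cG$.
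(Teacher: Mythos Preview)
Your proof is correct and follows essentially the same approach as the paper's: both bound $|f(\vi_j(w))-\vi_j(w)|$ via Corollary~\ref{cor_asymptotics_f} together with the estimate $|e^{-w}|\le|w|^{-\lambda}/|c_j|$ on $\cH(\lambda,1/|c_j|,\nu_0)$, and then apply Koebe's $1/4$-theorem to $\vi_j$ on a disk of radius comparable to $\nu_0$ (the paper uses $\cD(w,\nu_0-R)$, you use $\cD(w,|\im w|/2)$) to see that this displacement cannot leave $\cS_j$. The only cosmetic difference is that the paper keeps the bound in the form $3|\vi_j'(w)|$ rather than converting to $C|w|^{1/d-1}$, and leaves the induction for the ``in particular'' clause implicit.
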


\begin{proof}
Let $w\in \cH(\lambda,1/|c_j|,\nu_0).$ By Corollary \ref{cor_asymptotics_f}, \eqref{eq_pq'vi} and Remark \ref{remark_gamma}, 
\begin{equation}
\begin{split}
&|(f\circ\vi_j)(w)-\vi_j(w)|\\
&\le\frac{1}{|q'(\vi_j(w))|}\left(1+O\left(\frac{1}{|w|}\right)+\frac{|q'(\vi_j(w))c_je^{-w}|}{|p(\vi_j(w))|}\right)\\
&=\frac{1}{|q'(\vi_j(w))|}\left(1+O\left(\frac{1}{|w|}\right)+|w|^{\lambda}|c_je^{-w}|\left(1+O\left(\frac{1}{|w|^{1/d}}\right)\right)\right)\\
&\le\frac{3}{|q'(\vi_j(w))|}=3|\vi_j'(w)|
\end{split}
\end{equation}
if $|w|$ is sufficiently large. For $\nu_0\ge12+R$, with $R$ as in Section \ref{sec_changeOfVar},  we obtain
$$f(\vi_j(w))\in \cD(\vi_j(w),3|\vi_j'(w)|)\subset \cD\left(\vi_j(w),\frac{\nu_0-R}{4}|\vi_j'(w)|\right).$$
On the other hand, by Koebe's $1/4$-theorem, 
$$\cS_j\supset \vi_j\left(\cD\left(w,\nu_0-R\right)\right)\supset \cD\left(\vi_j(w),\frac{\nu_0-R}{4}|\vi_j'(w)|\right),$$
whence the claim follows.
\end{proof}
 
Next, we derive an asymptotic expression for
$$h_j(w)=(q\circ f \circ\vi_j)(w)$$
in $\cH(\lambda,2/|c_j|,\nu_1)$ for large $\nu_1>0$.

 \begin{lemma}   \label{lemma_asymptotics_rechts}
Let $j\in\{1,...,d\}$. There exists $\nu_1>0$ such that 
\begin{equation}  \label{eq_asymptotics_rechts}
h_j(w)=w-1+\frac{2m+1-d}{2d}\cdot\frac{1}{w}+O\left(\frac{1}{|w|^{1+1/d}}\right)-c_je^{-w}\vi_j(w)^{d\lambda}\left(1+O\left(\frac{1}{|w|^{1/d}}\right)\right)
\end{equation}
as $w\to\infty$ in $\cH(\lambda,2/|c_j|,\nu_1)$.
\end{lemma}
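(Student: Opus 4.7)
The plan is to start from the asymptotic expansion of $f\circ\vi_j$ in Corollary \ref{cor_asymptotics_f} and then apply $q$ via a Taylor expansion around $z=\vi_j(w)$. Write $\Delta:=f(\vi_j(w))-\vi_j(w)=\Delta_1+\Delta_2$, where
$$\Delta_1=-\frac{1}{q'(\vi_j(w))}\left(1+\frac{\lambda}{w}+O(|w|^{-1-1/d})\right),\qquad \Delta_2=-\frac{c_j e^{-w}}{p(\vi_j(w))}.$$
Since $q$ is a polynomial of degree $d$, Taylor's formula terminates and gives
$$h_j(w)=w+q'(\vi_j(w))\Delta+\tfrac{1}{2}q''(\vi_j(w))\Delta^2+\sum_{k=3}^{d}\tfrac{q^{(k)}(\vi_j(w))}{k!}\Delta^k.$$
The task is then to evaluate the first two terms precisely and to check that everything else is absorbed into the two error terms of the conclusion.

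For the linear term, \eqref{eq_pq'} in the form $q'(\vi_j(w))/p(\vi_j(w))=\vi_j(w)^{d\lambda}(1+O(|w|^{-1/d}))$ gives
$$q'(\vi_j(w))\Delta_1=-1-\tfrac{\lambda}{w}+O(|w|^{-1-1/d}),\qquad q'(\vi_j(w))\Delta_2=-c_je^{-w}\vi_j(w)^{d\lambda}(1+O(|w|^{-1/d})),$$
which already produces the exponential part of the conclusion. For the quadratic term, a direct computation from $q(z)=z^d+O(z^{d-1})$ yields
$$\frac{q''(z)}{q'(z)^2}=\frac{d-1}{d z^d}(1+O(1/z))=\frac{d-1}{d w}(1+O(|w|^{-1/d})),$$
so $\tfrac12 q''(\vi_j(w))\Delta_1^2=(d-1)/(2dw)+O(|w|^{-1-1/d})$. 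Combined with $-\lambda/w$ from the linear part, this produces a $1/w$ coefficient of $-\lambda+(d-1)/(2d)=(2m+1-d)/(2d)$, using $\lambda=(d-1-m)/d$, matching the statement.

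The main obstacle -- and the reason the statement is restricted to $\cH(\lambda,2/|c_j|,\nu_1)$ rather than all of $\cG$ -- is verifying that the leftover terms (the mixed $q''(\vi_j(w))\Delta_1\Delta_2$, the pure $\tfrac12 q''(\vi_j(w))\Delta_2^2$, and the higher-order contributions for $k\geq 3$) all fit into the two allowed errors. By Remark \ref{remark_gamma}, $|e^{-w}|\leq(2/|c_j|)|w|^{-\lambda}$ on this set, and together with $|\vi_j(w)|^d\asymp|w|$ from \eqref{eq_qEstimate} this shows that $|c_j e^{-w}\vi_j(w)^{d\lambda}|$ is uniformly bounded. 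Each additional factor of $e^{-w}\vi_j(w)^{d\lambda}$ is therefore bounded by a constant, while every additional derivative pairing $q^{(k)}/q'^\ell$ with $k\geq 2$ yields at least one extra factor of $1/w=O(|w|^{-1/d})$. Consequently, the terms involving at least one $\Delta_2$ and at least one derivative beyond $q'$ are of size $O(|c_je^{-w}\vi_j(w)^{d\lambda}|\cdot|w|^{-1/d})$, and the pure $\Delta_1^k$ contributions for $k\geq 3$ are of size $O(|w|^{1-k})\subseteq O(|w|^{-1-1/d})$ (using $d\geq 1$). A careful bookkeeping of these size estimates completes the proof.
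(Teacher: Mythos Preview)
Your proposal is correct and follows essentially the same approach as the paper: Taylor-expand $q$ about $\vi_j(w)$, compute the linear and quadratic contributions exactly, and use boundedness of $c_je^{-w}\vi_j(w)^{d\lambda}$ on $\cH(\lambda,2/|c_j|,\nu_1)$ (from Remark~\ref{remark_gamma}) to absorb the remaining terms. The only organizational difference is that the paper bundles everything into a single bounded quantity $\eta(w)=-q'(\vi_j(w))\Delta$ before expanding, whereas you split $\Delta=\Delta_1+\Delta_2$ from the start; the computations and estimates are the same.
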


\begin{remark}
In fact, for any $\alpha>0$, there exists $\nu>0$ such that $h_j$ has an asymptotic expression of the form \eqref{eq_asymptotics_rechts} in $\cH(\lambda,\alpha/|c_j|,\nu)$. We will need that $\alpha>1$ so that $\cH(\lambda,\alpha/|c_j|,\nu)\supset\cH(\lambda,1/|c_j|,\nu)$.
\end{remark}

\begin{proof}[Proof of Lemma \ref{lemma_asymptotics_rechts}]
By Corollary \ref{cor_asymptotics_f}, we have
$$f(\vi_j(w))=\vi_j(w)-\frac{\eta(w)}{q'(\vi_j(w))}$$
where 
$$\eta(w)=1+\frac{\lambda}{w}+O\left(\frac{1}{|w|^{1+1/d}}\right)+c_je^{-w}\frac{q'(\vi_j(w))}{p(\vi_j(w))}$$
as $|w|\to\infty$. Note that $\eta$ is bounded in $\cH(\lambda,2/|c_j|, \nu_1)$. Taylor expansion of $q$ around $\vi_j(w)$ yields
\begin{align}  
h_j(w)&=q(f(\vi_j(w)))=\sum_{k=0}^d\frac{1}{k!}q^{(k)}(\vi_j(w))(f(\vi_j(w))-\vi_j(w))^k\\
&=\sum_{k=0}^d\frac{(-1)^k}{k!}\frac{q^{(k)}(\vi_j(w))}{q'(\vi_j(w))^k}\eta(w)^k\\
&=w-\eta(w)+\frac{1}{2}\frac{q''(\vi_j(w))}{q'(\vi_j(w))^2}\eta(w)^2+\sum_{k=3}^d\frac{(-1)^k}{k!}O\left(\frac{1}{w^{k-1}}\right)\eta(w)^k\\
&=w-\eta(w)+\frac{1}{2}\frac{q''(\vi_j(w))}{q'(\vi_j(w))^2}\eta(w)^2+O\left(\frac{1}{w^2}\right)   \label{eq_asymprechts_1}
\end{align}
as $w\to\infty$ in $\cH(\lambda,2/|c_j|,\nu_1)$. Using that $\lambda=(d-1-m)/d$, we have
\begin{equation}  \label{eq_asymprechts_2}
-\eta(w)=-1+\frac{m+1-d}{d}\cdot\frac{1}{w}+O\left(\frac{1}{|w|^{1+1/d}}\right)-c_je^{-w}\vi_j(w)^{d-1-m}\left(1+O\left(\frac{1}{|w|^{1/d}}\right)\right).
\end{equation}
Moreover,
$$\frac{q''(\vi_j(w))}{q'(\vi_j(w))^2}=\frac{d-1}{d}\cdot\frac{1}{w}\left(1+O\left(\frac{1}{|w|^{1/d}}\right)\right)$$
and
\begin{equation}
\begin{split}
\eta(w)^2&=\left(1+O\left(\frac{1}{w}\right)+c_je^{-w}\vi_j(w)^{d\lambda}\left(1+O\left(\frac{1}{|w|^{1/d}}\right)\right)\right)^2\\
&=1+O\left(\frac{1}{w}\right)+c_je^{-w}\vi_j(w)^{d\lambda}\cdot O(1).
\end{split}
\end{equation}
Hence,
\begin{equation}   \label{eq_asymprechts_3}
\frac{1}{2}\frac{q''(\vi_j(w))}{q'(\vi_j(w))^2}\eta(w)^2=\frac{d-1}{2d}\frac{1}{w}+O\left(\frac{1}{|w|^{1+1/d}}\right)+c_je^{-w}\vi_j(w)^{d\lambda}O\left(\frac{1}{w}\right).
\end{equation}

Combining \eqref{eq_asymprechts_1}, \eqref{eq_asymprechts_2} and \eqref{eq_asymprechts_3} yields the desired conclusion.
\end{proof}

For the derivative of $h_j$, we obtain the following.

\begin{lemma}   \label{lemma_asymptotics_derivative_rechts}
Let $j\in\{1,...,d\}$. There exists $\nu_2>0$ such that
$$h_j'(w)=1+O\left(\frac{1}{|w|^{1+1/d}}\right)+c_je^{-w}\vi_j(w)^{d\lambda}\left(1+O\left(\frac{1}{|w|^{1/d}}\right)\right)$$
as $w\to\infty$ in $\cH(\lambda,1/|c_j|, \nu_2)$.
\end{lemma}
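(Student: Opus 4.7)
The idea is to differentiate $h_j=q\circ f\circ\vi_j$ directly by the chain rule rather than term-by-term differentiation of the asymptotic in Lemma \ref{lemma_asymptotics_rechts}. Since $\vi_j'(w)=1/q'(\vi_j(w))$, we get
$$h_j'(w)=\frac{q'(f(\vi_j(w)))}{q'(\vi_j(w))}\cdot f'(\vi_j(w)).$$
Therefore the proof reduces to obtaining sufficiently sharp asymptotics for the two factors on the right and then multiplying them, relying on the cancellation of a leading $(d-1)/(dw)$ term.

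First I would compute an asymptotic for $f'$. Since $f'=gg''/(g')^2=(g/g')\cdot(g''/g')$, I use $g'=pe^q$, $g''=(p'+pq')e^q$ (so that $g''/g'=q'(1+p'/(pq'))=q'(1+m/(dz^d)+O(1/z^{d+1}))$), and the asymptotic of $g$ from Corollary \ref{cor_asymptotics_g} to obtain, after multiplication and using $\lambda=(d-1-m)/d$,
$$f'(z)=1+\frac{d-1}{dz^d}+O\!\left(\frac{1}{z^{d+1}}\right)+\frac{c_jq'(z)}{p(z)}e^{-q(z)}\left(1+O\!\left(\frac{1}{z^d}\right)\right)$$
as $z\to\infty$ in $\cS_j$. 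Converting to $w=q(z)$ via $1/z^d=1/w+O(1/|w|^{1+1/d})$ and using \eqref{eq_pq'} in the form $q'(z)/p(z)=z^{d\lambda}(1+O(1/z))$, this becomes
$$f'(\vi_j(w))=1+\frac{d-1}{dw}+O\!\left(\frac{1}{|w|^{1+1/d}}\right)+c_je^{-w}\vi_j(w)^{d\lambda}\left(1+O\!\left(\frac{1}{|w|^{1/d}}\right)\right).$$

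Next I would Taylor-expand $q'$ at $\vi_j(w)$, using $f(\vi_j(w))-\vi_j(w)=-\eta(w)/q'(\vi_j(w))$ where $\eta$ is the bounded function from the proof of Lemma \ref{lemma_asymptotics_rechts}. Together with $q''(z)/q'(z)=(d-1)/z+O(1/z^2)$ and $q'''(z)/q'(z)=O(1/z^2)$, this gives
$$\frac{q'(f(\vi_j(w)))}{q'(\vi_j(w))}=1-\frac{d-1}{dw}\,\eta(w)+O\!\left(\frac{1}{|w|^{1+1/d}}\right).$$
Substituting the expansion $\eta(w)=1+O(1/w)+c_je^{-w}\vi_j(w)^{d\lambda}(1+O(1/|w|^{1/d}))$ isolates a $-(d-1)/(dw)$ piece plus admissible error and a term of the form $c_je^{-w}\vi_j(w)^{d\lambda}\cdot O(1/|w|)$.

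Finally, multiplying the two expansions, the $\pm(d-1)/(dw)$ terms cancel, the $O(1/|w|^{1+1/d})$ contributions combine into a single $O(1/|w|^{1+1/d})$, and the exponential contributions combine into $c_je^{-w}\vi_j(w)^{d\lambda}(1+O(1/|w|^{1/d}))$ after absorbing $O(1/|w|)\subset O(1/|w|^{1/d})$. This yields the asserted formula, valid on $\cH(\lambda,1/|c_j|,\nu_2)\subset\cH(\lambda,2/|c_j|,\nu_1)$ for $\nu_2$ large enough to accommodate all $O$-estimates. The main obstacle is bookkeeping: one must verify that the $1/z^d$ terms really combine into $(d-1)/(dz^d)$ (using $d\lambda+m=d-1$) so that the cancellation against $(d-1)/(dw)$ coming from $q''/q'$ actually takes place, and one must ensure that the exponential term $c_je^{-w}\vi_j(w)^{d\lambda}$ is bounded on the region (which follows from Remark \ref{remark_gamma}) so that the cross-terms with algebraic errors remain of the required size.
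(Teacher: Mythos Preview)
Your argument is correct and the bookkeeping checks out: the $(d-1)/(dw)$ from $f'(\vi_j(w))$ cancels against the $-(d-1)/(dw)$ coming from $q''/q'^2$ in the Taylor expansion of $q'(f(\vi_j(w)))/q'(\vi_j(w))$, and on $\cH(\lambda,1/|c_j|,\nu_2)$ the exponential block $c_je^{-w}\vi_j(w)^{d\lambda}$ is bounded (Remark~\ref{remark_gamma}), so all cross terms land in the stated error classes.

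However, this is not the route the paper takes. The paper differentiates the asymptotic formula of Lemma~\ref{lemma_asymptotics_rechts} term by term: it writes the two $O(\cdot)$ errors there as holomorphic functions $a_1,a_2$ on the \emph{larger} region $\cH(\lambda,2/|c_j|,\nu_1)$, and then bounds $a_1',a_2'$ on the smaller region $\cH(\lambda,1/|c_j|,\nu_2)$ via Cauchy's inequality on unit disks (Lemma~\ref{lemma_gamma}(iv) guarantees such disks stay inside the larger region). The explicit terms are differentiated directly, and no $(d-1)/(dw)$ cancellation is needed because it already happened inside Lemma~\ref{lemma_asymptotics_rechts}. This explains the otherwise mysterious choice $\alpha=2$ in Lemma~\ref{lemma_asymptotics_rechts} and the Remark following it. Your chain-rule computation is more hands-on and avoids the Cauchy step entirely, at the price of redoing some of the algebra that is implicitly encoded in Lemma~\ref{lemma_asymptotics_rechts}; the paper's approach is shorter because it recycles that lemma wholesale.
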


\begin{proof}
Suppose $\nu_2\ge\nu_1+1$. By Lemma \ref{lemma_asymptotics_rechts}, there are holomorphic functions, $a_1, a_2$, satisfying $a_1(w)=O(1/|w|^{1+1/d})$ and $a_2(w)=O(1/|w|^{1/d})$ as $w\to\infty$ such that
$$h_j(w)=w-1+\frac{2m+1-d}{2d}\cdot\frac{1}{w}+a_1(w)-c_je^{-w}\vi_j(w)^{d\lambda}\left(1+a_2(w)\right)$$
for $w\in\cH(\lambda, 2/|c_j|, \nu_2-1)$. By Lemma \ref{lemma_gamma} and Cauchy's inequality, we have $a_1'(w)=O(1/|w|^{1+1/d})$ and $a_2'(w)=O(1/|w|^{1/d})$ as $w\to\infty $ in $\cH(\lambda,1/|c_j|, \nu_2)$. Also,
\begin{equation}
\begin{split}
\frac{\diff}{\diff w}e^{-w}\vi_j(w)^{d\lambda}&=-e^{-w}\vi_j(w)^{d\lambda}\left(1-\frac{d\lambda}{\vi_j(w)}\vi_j'(w)\right)\\
&=-e^{-w}\vi_j(w)^{d\lambda}\left(1+O\left(\frac{1}{w}\right)\right).
\end{split}
\end{equation}
Thus,
$$\frac{\diff}{\diff w} \left(c_je^{-w}\vi_j(w)^{d\lambda}(1+a_2(w))\right)=-c_je^{-w}\vi_j(w)^{d\lambda}\left(1+O\left(\frac{1}{|w|^{1/d}}\right)\right).$$
We obtain
\begin{equation}
\begin{split}
h_j'(w)
&=1-\frac{2m+1-d}{2d}\frac{1}{w^2}+a_1'(w)+c_je^{-w}\vi_j(w)^{d\lambda}\left(1+O\left(\frac{1}{|w|^{1/d}}\right)\right)\\
&=1+O\left(\frac{1}{|w|^{1+1/d}}\right)+c_je^{-w}\vi_j(w)^{d\lambda}\left(1+O\left(\frac{1}{|w|^{1/d}}\right)\right)
\end{split}
\end{equation}
as $w\to\infty$ in $\cH(\lambda,1/|c_j|,\nu_2)$.
\end{proof}

We will now proceed as follows. Recall that if $z_0\in\cS_j$ is a superattracting fixed point of $f$, then $q(z_0)$ lies close to the curve $\Gamma(\lambda,1/|c_j|)$. Also, every horizontal strip of width $2\pi+\epsi$ with $\epsi>0$ that is sufficiently far from the real axis contains such an image of a superattracting fixed point. We will show that if $z_0$ is a superattracting fixed point of $f$ and $\cA^*(z_0)$ is its immediate basin of attraction, then $q(\cA^*(z_0))$ contains a disk of a fixed radius around $q(z_0)$. We then consider preimages of this disk under iterates of $h_j=q\circ f\circ\vi_j$. The function $h_j$ is not locally invertible at $q(z_0)$, but if $\alpha$ is slightly smaller than $1$, then $h_j$ has a local inverse function, $\psi_j$, defined in $\cH(\lambda,\alpha/|c_j|,\nu)$. If $\alpha$ is sufficiently close to $1$, then $\cH(\lambda,\alpha/|c_j|,\nu)$ intersects the disks contained in $q(\cA^*(z_0))$. We then show that the images of this intersection under $\psi_j$ have a certain size and are more or less evenly distributed in $\cH(\lambda,1/|c_j|,\nu)$. The idea here is that  if $w\in\cH(\lambda,1/|c_j|,\nu)$ is sufficiently far from the boundary, then Lemma \ref{lemma_asymptotics_rechts} yields $h_j(w)\approx w-1$, and hence $\psi_j(w)\approx w+1$. See Figure \ref{fig_q(F)_rechts} for an illustration of the abovementioned approach.

\begin{figure}[ht]  
\centering
\begin{tikzpicture}[scale=1.5]
\draw[thick, dashed, smooth, samples = 20, domain = 0.5:1.5] plot (\x, {sqrt(0.1*exp(4*\x)-\x*\x)});
\draw[thick, smooth, samples = 20, domain = 0.7:1.7] plot (\x, {sqrt(0.1*exp(4*(\x-0.2))-(\x-0.2)*(\x-0.2))});

\draw[thick] (1.35, 5.01) circle (0.6);
\filldraw (1.35, 5.01) circle (2pt);
\filldraw[color = black, fill = black!35, thick] (1.77, 5.01) circle (0.15);
\draw[->, thick] (2.05, 5.01) -- (3.25, 5) node[midway, above]{$\psi_{j}$};
\filldraw[color = black, fill = black!35, thick, rounded corners = 1pt] (3.75, 5) .. controls (3.71, 5.11) and (3.6, 5.2) .. (3.49, 5.11)
	.. controls (3.45, 5) .. (3.44, 4.89)
	.. controls (3.6, 4.9) .. (3.76, 4.89)
	-- cycle;
\draw[->, thick] (3.95,5) -- (5.05,5.2) node[midway, above]{$\psi_{j}$};
\filldraw[color = black, fill = black!35, thick, rounded corners = 1pt] (5.55, 5.2) .. controls (5.51, 5.31) and (5.4, 5.4) .. (5.29, 5.31)
	.. controls (5.25, 5.2) .. (5.24, 5.04)
	.. controls (5.4, 5.15) .. (5.56, 5.04)
	-- cycle;
\draw[->, thick] (5.75, 5.2) -- (7.05, 5.25) node[midway, above]{$\psi_{j}$};
\filldraw[color = black, fill = black!35, thick, rounded corners = 1pt] (7.6, 5.3) .. controls (7.56, 5.41) and (7.4, 5.5) .. (7.29, 5.41)
	.. controls (7.25, 5.3) .. (7.24, 5.14)
	.. controls (7.4, 5.25) .. (7.61, 5.14)
	-- cycle;

\draw[thick] (1,1.89) circle (0.6);
\filldraw (1, 1.89) circle (2pt);
\filldraw[color = black, fill = black!35, thick] (1.4, 1.89) circle (0.15);
\draw[->, thick] (1.7, 1.89) -- (2.7, 1.9) node[midway, above]{$\psi_{j}$};
\filldraw[color = black, fill = black!35, thick, rounded corners = 1pt]  (3.15, 1.9) .. controls (3.11, 2.01) and (3, 2) .. (2.89, 2.01) 	.. controls (2.9, 1.9) .. (2.89, 1.79)
	.. controls (3, 1.75) .. (3.16, 1.84)
	-- cycle;
\draw[->, thick] (3.35, 1.9) -- (4.5, 1.6) node[midway, above]{$\psi_{j}$};
\filldraw[color = black, fill = black!35, thick, rounded corners = 1pt]  (5, 1.6) .. controls (4.91, 1.71) and (4.8, 1.7) .. (4.69,1.71) 	.. controls (4.7, 1.6) .. (4.69, 1.49)
	.. controls (4.8, 1.45) .. (4.91, 1.59)
	-- cycle;
\draw[->, thick] (5.2, 1.6) -- (6.4, 1.7) node[midway, above]{$\psi_{j}$};
\filldraw[color = black, fill = black!35, thick, rounded corners = 1pt]  (6.9, 1.8) .. controls (6.81, 1.81) and (6.7, 1.8) .. (6.59,1.81) 	.. controls (6.6, 1.7) .. (6.59, 1.59)
	.. controls (6.7, 1.55) .. (6.81, 1.69)
	-- cycle;
	
\end{tikzpicture}
\caption{The images of superattracting fixed points of $f$ under $q$ lie close to the dashed line. The white disks around them are contained in the images of the basins of attraction of the superattracting fixed points. To the right of the solid line, the inverse $\psi_j$ of $h_j$ is defined. The grey disks lie in the intersection of the images of the basins of attraction under $q$ and the domain of definition of $\psi_j$, and their images under iteration of $\psi_j$ are contained in $q(\cF_j)$.}
\label{fig_q(F)_rechts}
\end{figure}
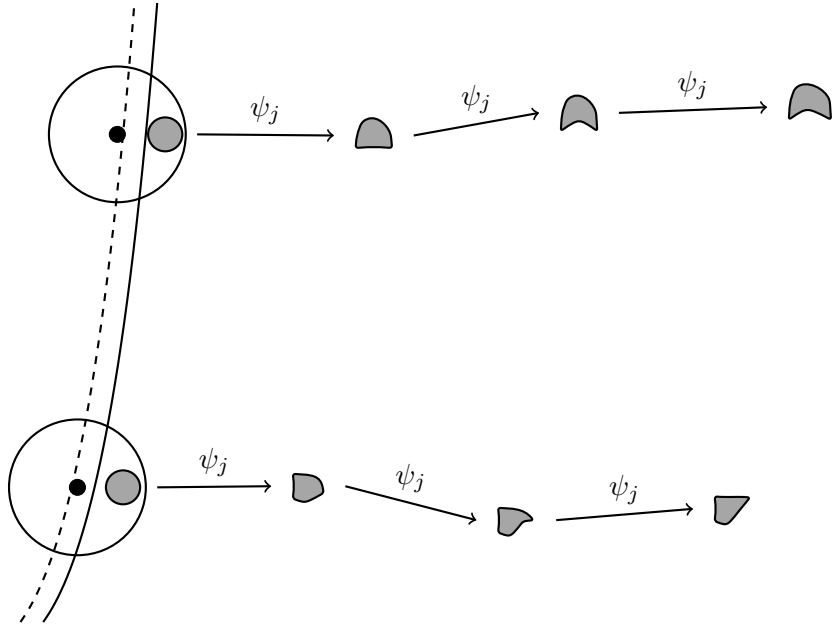

\begin{lemma}     \label{lemma_disk_attr_basin}
If $z_0$ is a zero of $g$ but not a zero of $g'$ and $|z_0|$ is sufficiently large, then
$$\cA^*(z_0)\supset \cD\left(z_0,\frac{1}{3d|z_0|^{d-1}}\right).$$
\end{lemma}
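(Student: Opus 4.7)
The plan is to show that $f$ maps $\overline{\cD(z_0,r)}$ strictly into $\cD(z_0,r)$, where $r:=1/(3d|z_0|^{d-1})$, whenever $|z_0|$ is large. Once this is established, $\{f^n\}$ is uniformly bounded on $\cD(z_0,r)$ and hence forms a normal family, so the disk lies in $\cF(f)$; since $z_0$ is a fixed point in the disk and $f^n$ converges to $z_0$ on the disk, the Fatou component containing $\cD(z_0,r)$ must be $\cA^*(z_0)$.

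To produce the bound, I would first use $g(z_0)=0$ to write $g(z)=\int_{z_0}^z g'(t)\,dt$, and then apply Fubini to obtain
\begin{equation*}
f(z)-z_0 \;=\; (z-z_0)-\frac{g(z)}{g'(z)} \;=\; \frac{1}{g'(z)}\int_{z_0}^z g''(s)(s-z_0)\,ds \;=\; \frac{(z-z_0)^2}{g'(z)}\int_0^1 u\, g''(z_0+u(z-z_0))\,du.
\end{equation*}
Next, since $g'=pe^q$, one has the exact identity $g''(w)/g'(w)=p'(w)/p(w)+q'(w)$; combined with $p(t)=dt^m+O(t^{m-1})$ and $q'(t)=dt^{d-1}+O(t^{d-2})$, this gives $g''(w)/g'(w)=dw^{d-1}(1+O(1/w))$ as $w\to\infty$. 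Because $r\to0$ as $|z_0|\to\infty$, the disk $\cD(z_0,r)$ shrinks around $z_0$, and for any fixed $\epsi>0$ we get the uniform bound $|g''(w)/g'(w)|\le(1+\epsi)d|z_0|^{d-1}$ on $\cD(z_0,r)$, once $|z_0|$ is sufficiently large.

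Writing $g''(w)=g'(w)\cdot(g''/g')(w)$ in the integrand, I still need to control $|g'(w)/g'(z)|$ for $w,z\in\cD(z_0,r)$. Using $g'(w)/g'(z)=\exp\bigl(\int_z^w(g''/g')(\zeta)\,d\zeta\bigr)$ and the bound above, the exponent has modulus at most $r\cdot(1+\epsi)d|z_0|^{d-1}=(1+\epsi)/3$, so $|g'(w)/g'(z)|\le e^{(1+\epsi)/3}$. Combining everything in the integral representation yields
\begin{equation*}
|f(z)-z_0| \;\le\; |z-z_0|^2\cdot\frac{1}{2}\cdot(1+\epsi)d|z_0|^{d-1}\cdot e^{(1+\epsi)/3} \;\le\; r\cdot\frac{(1+\epsi)e^{(1+\epsi)/3}}{6} \;<\; r,
\end{equation*}
since $e^{1/3}/6<1/4$, provided $\epsi$ is chosen small (hence $|z_0|$ sufficiently large). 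This gives the claimed invariance and completes the proof.

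The only subtle point is ensuring uniformity in $z_0$: the asymptotic $g''/g'\sim dz^{d-1}$ must hold on the whole disk $\cD(z_0,r)$, which is automatic because $r$ shrinks to $0$ far faster than $|z_0|$ grows, so every $w\in\cD(z_0,r)$ satisfies $w/z_0\to1$. Everything else is a routine quadratic-convergence estimate for Newton's method at a simple root, made explicit using the special form $g'=pe^q$.
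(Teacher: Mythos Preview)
Your proof is correct and takes a genuinely different route from the paper's. The paper invokes the global B\"ottcher conjugation: for $|z_0|$ large, $\cA^*(z_0)$ contains no critical point of $f$ other than $z_0$, so there is a conformal map $\Phi:\cD(0,1)\to\cA^*(z_0)$ with $f(\Phi(z))=\Phi(z^2)$; differentiating this relation twice at $0$ gives $|\Phi'(0)|=2/|f''(z_0)|$, and then Koebe's $1/4$-theorem yields $\cA^*(z_0)\supset\cD(z_0,1/(2|f''(z_0)|))$, which combined with $f''(z_0)=g''(z_0)/g'(z_0)=dz_0^{d-1}(1+o(1))$ gives the stated disk. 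Your argument instead proves the Newton-method quadratic contraction directly: from the integral identity $f(z)-z_0=\dfrac{(z-z_0)^2}{g'(z)}\int_0^1 u\,g''(z_0+u(z-z_0))\,du$ and the explicit asymptotic $g''/g'=q'+p'/p=dz^{d-1}(1+o(1))$, you obtain $|f(z)-z_0|<r$ on $\overline{\cD(z_0,r)}$, whence $f^n\to z_0$ uniformly there and the disk lies in $\cA^*(z_0)$. Your approach is more elementary and entirely self-contained (no B\"ottcher theorem, no Koebe), and it sidesteps the paper's need to check that $\cA^*(z_0)$ contains no stray critical point of $f$. The paper's approach, on the other hand, is more conceptual and would in principle yield the sharp constant $1/(2|f''(z_0)|)$ rather than the cruder $1/(3d|z_0|^{d-1})$, though that extra precision is not used anywhere.
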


For the proof, we require the following theorem which essentially says that under suitable assumptions the solution of B\"ottcher's functional equation in a neighbourhood of a superattracting fixed point extends to a conformal map defined in the entire immediate basin of attraction.

\begin{thm}    \label{thm_conjugation_attr_basin}
Let $h$ be a meromorphic function, and let $z_0$ be a superattracting fixed point of multiplicity $k$ of $h$. Suppose that $\cA^*(z_0)$ contains no critical point other than $z_0$ and no asymptotic value of $h$. Then there is a conformal map $\Phi:\cD(0,1)\to \cA^*(z_0)$ satisfying $\Phi(0)=z_0$ and
$$h(\Phi(z))=\Phi(z^k)$$
for all $z\in\cD(0,1)$.
\end{thm}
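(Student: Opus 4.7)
The plan is to use Böttcher's theorem to obtain a local conformal conjugacy near $z_0$ and then extend it to a conformal map on the entire unit disk by iteratively pulling back through $h$. The two hypotheses on $\cA^*(z_0)$ enter precisely in the step that ensures the pullbacks are single-valued and land in $\cA^*(z_0)$.

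First, because $z_0$ is a superattracting fixed point of multiplicity $k$, Böttcher's theorem produces $r_0 \in (0, 1)$ and a conformal map $\Phi_0 : \cD(0, r_0) \to U_0$, with $U_0$ a neighbourhood of $z_0$ contained in $\cA^*(z_0)$, such that $\Phi_0(0) = z_0$ and $h(\Phi_0(w)) = \Phi_0(w^k)$ for all $w \in \cD(0, r_0)$. Let $R^*$ denote the supremum of all $R \in [r_0, 1]$ for which $\Phi_0$ extends to an injective holomorphic map $\Phi : \cD(0, R) \to \cA^*(z_0)$ satisfying $h \circ \Phi(w) = \Phi(w^k)$. I claim $R^* = 1$. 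Suppose instead that $R^* < 1$, so that $(R^*)^{1/k} > R^*$. Write $W := \Phi(\cD(0, R^*))$, and let $V$ be the connected component of $h^{-1}(W) \cap \cA^*(z_0)$ containing $z_0$. Because $\cA^*(z_0)$ contains no critical point of $h$ other than $z_0$ and no asymptotic value of $h$, the restriction $h|_{V \setminus \{z_0\}} : V \setminus \{z_0\} \to W \setminus \{z_0\}$ is an unramified covering, and because $h$ has local degree $k$ at $z_0$, this covering has degree $k$. Since $W$ is simply connected, so is $V$, and pulling back the uniformization $\Phi$ of $W$ through the degree-$k$ cover $w \mapsto w^k$ from $\cD(0, (R^*)^{1/k})$ onto $\cD(0, R^*)$ produces a conformal map $\Phi_1 : \cD(0, (R^*)^{1/k}) \to V$ that extends $\Phi$ (after fixing the $k$-th-root rotational ambiguity by requiring agreement on $\cD(0, R^*)$) and satisfies $h \circ \Phi_1(w) = \Phi_1(w^k)$. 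This contradicts the maximality of $R^*$, so $R^* = 1$. Surjectivity onto $\cA^*(z_0)$ follows because for any $z \in \cA^*(z_0)$ the iterates $h^n(z)$ enter $U_0 \subset \Phi(\cD(0, 1))$ for large $n$, and pulling back through the same covering argument places $z$ itself in $\Phi(\cD(0, 1))$.

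The main technical point is verifying that $h|_{V \setminus \{z_0\}}$ is an honest unramified covering and not merely a local homeomorphism. Local injectivity is immediate from the hypothesis that $\cA^*(z_0) \setminus \{z_0\}$ contains no critical point of $h$, but the global path-lifting property — which can otherwise fail exactly when paths in the base approach an asymptotic value of $h$ — is secured by the hypothesis that $\cA^*(z_0)$ contains no asymptotic value. Once this is in place, the inductive extension from $\cD(0, R)$ to $\cD(0, R^{1/k})$ carries $\Phi$ to the entire unit disk, since iterating $R \mapsto R^{1/k}$ from any $R \in (0, 1)$ tends to $1$. This proves the theorem.
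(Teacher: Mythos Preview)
The paper does not prove this theorem itself; it cites \cite[p.~65, Theorem~4]{steinmetz93} for the rational case and remarks that the same proof works for meromorphic $h$ once $\cA^*(z_0)$ is assumed to contain no asymptotic value. Your sketch is exactly that standard B\"ottcher-extension argument and is correct.
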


A proof of this theorem can be found, for example, in \cite[p. 65, Theorem 4]{steinmetz93}. There, the result is stated for rational functions, but the proof also works for meromorphic functions without asymptotic values in $\cA^*(z_0)$.

\begin{proof}[Proof of Lemma \ref{lemma_disk_attr_basin}]
Let $z_0$ be a zero of $g$ that is not a zero of $g'$, and assume that none of the finitely many zeros of $g''$ lies in $\cA^*(z_0)$. Then $z_0$ is a superattracting fixed point of $f$, and there are no other critical points of $f$ in $\cA^*(z_0)$. Also, 
$$f''(z)=\frac{g'(z)^2g''(z)+g(z)g'(z)g'''(z)-2g(z)g''(z)^2}{g'(z)^3},$$
and hence
$$f''(z_0)=\frac{g''(z_0)}{g'(z_0)}\ne0.$$
By Theorem \ref{thm_conjugation_attr_basin}, there is a conformal map $\Phi:\cD(0,1)\to \cA^*(z_0)$ satisfying $f(\Phi(z))=\Phi(z^2)$ and $\Phi(0)=z_0.$ 
Differentiating the equation $f(\Phi(z))=\Phi(z^2)$ twice yields
$$f''(\Phi(z))\Phi'(z)^2+f'(\Phi(z))\Phi''(z)=2\Phi'(z^2)+4z^2\Phi''(z^2).$$
For $z=0$, we obtain
$$f''(z_0)\Phi'(0)^2=2\Phi'(0)$$
and hence
$$|\Phi'(0)|=\frac{2}{|f''(z_0)|}.$$
We have
\begin{equation}
\begin{split}
f''(z_0)&=\frac{g''(z_0)}{g'(z_0)}=\frac{(p(z_0)q'(z_0)+p'(z_0))e^{q(z_0)}}{p(z_0)e^{q(z_0)}}\\
&=q'(z_0)+\frac{p'(z_0)}{p(z_0)}=dz_0^{d-1}\left(1+O\left(\frac{1}{z_0}\right)\right).
\end{split}
\end{equation}
Hence, by Koebe's $1/4$-Theorem,
$$\cA^*(z_0)=\Phi(\cD(0,1))\supset \cD\left(z_0,\frac{1}{4}|\Phi'(0)|\right)=\cD\left(z_0,\frac{1}{2|f''(z_0)|}\right)\supset \cD\left(z_0,\frac{1}{3d|z_0|^{d-1}}\right)$$
if $|z_0|$ is sufficiently large.
\end{proof}

\begin{cor}   \label{cor_disk_attr_basin}
Let $z_0\in\dC$ be a zero of $g$ that is not a zero of $g'$. If $|z_0|$ is sufficiently large, then
$$q(\cA^*(z_0))\supset \cD\left(q(z_0),\frac{1}{13}\right).$$
\end{cor}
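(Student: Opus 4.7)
The plan is to bootstrap Lemma \ref{lemma_disk_attr_basin} across $q$ by applying Koebe's $1/4$-theorem (the first assertion of Lemma \ref{lemma_koebe}) to $q$ itself on the disk around $z_0$ that we already know lies in $\cA^*(z_0)$. Set $r := 1/(3d|z_0|^{d-1})$, so that Lemma \ref{lemma_disk_attr_basin} gives $\cD(z_0, r) \subset \cA^*(z_0)$ for all sufficiently large $|z_0|$.

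To legitimately apply Koebe's $1/4$-theorem, I first need to know that $q$ is injective on $\cD(z_0, r)$. This is easy: by Corollary \ref{cor_zeros}, for large $|z_0|$ the zero $z_0$ lies in some $\cS_j$ with
$$\dist(z_0, \partial \cS_j) \ge \left(\frac{1}{d} + o(1)\right)|z_0|,$$
a quantity vastly larger than $r$, so $\cD(z_0, r) \subset \cS_j$. Since Lemma \ref{lemma_change_of_var} says that $q|_{\cS_j}$ is conformal (in particular injective), the restriction of $q$ to $\cD(z_0, r)$ is injective as required.

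Applying Lemma \ref{lemma_koebe} to $q$ on $\cD(z_0, r)$ then gives
$$q(\cA^*(z_0)) \supset q(\cD(z_0, r)) \supset \cD\left(q(z_0), \tfrac{1}{4}|q'(z_0)| r\right).$$
Because $q'(z) = d z^{d-1}(1 + O(1/z))$ as $z \to \infty$, we have $|q'(z_0)| = d|z_0|^{d-1}(1 + o(1))$, so the radius on the right equals
$$\frac{1}{4} \cdot d|z_0|^{d-1}(1+o(1)) \cdot \frac{1}{3d|z_0|^{d-1}} = \frac{1}{12} + o(1),$$
which exceeds $1/13$ for all sufficiently large $|z_0|$. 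This yields the claimed inclusion.

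There is no real obstacle here: the only substantive check is the injectivity step, and the specific constant $1/13$ in the statement is visibly tailored to the numerical inequality $1/12 > 1/13$ (the factor $1/3$ coming from Lemma \ref{lemma_disk_attr_basin} and the factor $1/4$ from Koebe's theorem), leaving a small margin to absorb the $o(1)$ error.
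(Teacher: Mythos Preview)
Your proof is correct and follows the same approach as the paper: apply Lemma \ref{lemma_disk_attr_basin} to get a disk in $\cA^*(z_0)$, note that $q$ is injective there, and use Koebe's $1/4$-theorem together with $q'(z)=dz^{d-1}(1+O(1/z))$ to obtain a disk of radius $1/12+o(1)>1/13$ around $q(z_0)$. You actually supply slightly more detail than the paper on the injectivity step (via Corollary \ref{cor_zeros} and Lemma \ref{lemma_change_of_var}).
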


\begin{proof}
If $|z_0|$ is sufficiently large, then by Lemma \ref{lemma_disk_attr_basin},
$$\cA^*(z_0)\supset \cD\left(z_0,\frac{1}{3d|z_0|^{d-1}}\right),$$
and $q$ is injective in this disk. Koebe's $1/4$-Theorem yields
$$q(\cA^*(z_0))\supset q\left(\cD\left(z_0,\frac{1}{3d|z_0|^{d-1}}\right)\right)\supset \cD\left(q(z_0),\frac{|q'(z_0)|}{12d|z_0|^{d-1}}\right).$$
Since $q'(z)=dz^{d-1}(1+O(1/z))$ as $z\to\infty$, the claim follows.
\end{proof}

The next Lemma deals with preimages under $h_j$.

\begin{lemma}  \label{lemma_preimages}
Let $\alpha\in(0,1),\,\epsi\in(0,1-\alpha)$ and $j\in\{1,...,d\}$. There exists $\nu_3>0$ such that for each $w_0\in \cH(\lambda,\alpha/|c_j|,\nu_3)$, there is a unique $w\in \cH(\lambda,\alpha/|c_j|,\nu_3-1)$ with $h_j(w)=w_0.$ More precisely,
$w\in \cD(w_0+1, \alpha+\epsi)$.
\end{lemma}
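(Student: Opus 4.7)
The strategy is to apply Rouché's theorem to show that $h_j(w) - w_0$ has exactly one zero in the disk $\overline{\cD(w_0+1,\alpha+\epsi)}$, by comparing it with the linear function $w \mapsto w - (w_0+1)$, which has a unique zero there.

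First, I would verify that $\overline{\cD(w_0+1,\alpha+\epsi)}$ lies inside a region where Lemma \ref{lemma_asymptotics_rechts} applies. Since $\alpha+\epsi<1$, any $w$ in this disk satisfies $|\im w| \ge |\im w_0|-1 \ge \nu_3-1$ and $\re w \ge \re w_0 + (1-\alpha-\epsi) \ge \re w_0$, while $|w|=|w_0|(1+O(1/|w_0|))$. Choosing $\nu_3$ large enough, the disk lies in $\cH(\lambda, 2/|c_j|, \nu_1)$, so the asymptotic expansion of Lemma \ref{lemma_asymptotics_rechts} is valid on it.

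Second, I would estimate the error $h_j(w)-(w-1)$ on the disk. Using $w_0\in\cH(\lambda,\alpha/|c_j|,\nu_3)$ gives $|e^{-w_0}|\le(\alpha/|c_j|)|w_0|^{-\lambda}$, and because $\re w\ge\re w_0+1-(\alpha+\epsi)$ on the disk,
$$|e^{-w}|\le e^{-(1-\alpha-\epsi)}\cdot\frac{\alpha}{|c_j|}|w_0|^{-\lambda}.$$
Combined with $|\vi_j(w)|^{d\lambda}\sim|w|^\lambda\sim|w_0|^\lambda$, which follows from \eqref{eq_pq'} and \eqref{eq_pq'vi}, this yields $|c_je^{-w}\vi_j(w)^{d\lambda}|\le\alpha e^{-(1-\alpha-\epsi)}(1+o(1))$, strictly less than $\alpha$ for $|w_0|$ large. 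Together with the $O(|w|^{-1})$ and $O(|w|^{-1-1/d})$ remainder terms from Lemma \ref{lemma_asymptotics_rechts}, this gives $|h_j(w)-(w-1)|<\alpha+\epsi$ on $\overline{\cD(w_0+1,\alpha+\epsi)}$, provided $\nu_3$ is large.

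By Rouché's theorem applied to $w\mapsto w-(w_0+1)$ and $h_j(w)-w_0=(w-(w_0+1))+(h_j(w)-(w-1))$ on the boundary of $\cD(w_0+1,\alpha+\epsi)$, the function $h_j-w_0$ has exactly one zero in the disk. For uniqueness in the larger set $\cH(\lambda,\alpha/|c_j|,\nu_3-1)$, I would observe that on this set $|e^{-w}|\le(\alpha/|c_j|)|w|^{-\lambda}$, so the same argument \emph{without} the extra factor $e^{-(1-\alpha-\epsi)}$ yields $|h_j(w)-(w-1)|\le\alpha+o(1)<\alpha+\epsi$ for $\nu_3$ large. Hence any preimage of $w_0$ in the larger region lies automatically in $\cD(w_0+1,\alpha+\epsi)$, and uniqueness there completes the proof. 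The main obstacle is producing the strict inequality $|h_j(w)-(w-1)|<\alpha+\epsi$ on the disk: on the boundary of $\cH(\lambda,\alpha/|c_j|,\nu_3)$ itself, $|c_je^{-w}\vi_j(w)^{d\lambda}|$ can approach $\alpha$, so one must exploit the rightward shift by at least $1-(\alpha+\epsi)>0$ inside $\cD(w_0+1,\alpha+\epsi)$ to produce the factor $e^{-(1-\alpha-\epsi)}<1$ and obtain the needed gap.
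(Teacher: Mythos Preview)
Your proposal is correct and follows the same route as the paper: estimate $|h_j(w)-(w-1)|$ using Lemma \ref{lemma_asymptotics_rechts}, then apply Rouch\'e on $\partial\cD(w_0+1,\alpha+\epsi)$, and deduce uniqueness in $\cH(\lambda,\alpha/|c_j|,\nu_3-1)$ from the same inequality.

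One simplification: the rightward shift you single out as ``the main obstacle'' is not actually needed. The paper observes (via Lemma \ref{lemma_gamma}) that $\overline{\cD(w_0+1,\alpha+\epsi)}\subset\cH(\lambda,\alpha/|c_j|,\nu_3-1)$ for $\nu_3$ large, so Remark \ref{remark_gamma} gives $|c_je^{-w}|\,|w|^\lambda\le\alpha$ directly on the disk, and hence $|h_j(w)-(w-1)|\le\alpha+o(1)<\alpha+\epsi$. The slack $\epsi$ already absorbs the $o(1)$ terms; your extra factor $e^{-(1-\alpha-\epsi)}$ is harmless but superfluous.
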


\begin{proof}
Let $w\in \cH(\lambda,\alpha/|c_j|,\nu_3-1).$ By Lemma \ref{lemma_asymptotics_rechts} and Remark \ref{remark_gamma},
\begin{equation}
\begin{split}
|h_j(w)-(w-1)|&\le O\left(\frac{1}{|w|}\right)+|c_je^{-w}||w|^{\lambda}\left(1+O\left(\frac{1}{|w|^{1/d}}\right)\right)\\
&\le O\left(\frac{1}{|w|}\right)+\alpha\left(1+O\left(\frac{1}{|w|^{1/d}}\right)\right)<\alpha+\epsi,
\end{split}
\end{equation}
provided $\nu_3$ and hence $|w|$ is sufficiently large. If $h_j(w)=w_0$, we obtain
$$|w-(w_0+1)|=|w_0-(w-1)|=|h_j(w)-(w-1)|<\alpha+\epsi,$$
that is, $w\in \cD(w_0+1,\alpha+\epsi)$. On the other hand, Lemma \ref{lemma_gamma} yields that  
$$\overline{\cD(w_0+1,\alpha+\epsi)}\subset \cH(\lambda,\alpha/|c_j|,\nu_3-1)$$
 if $\nu_3$ is sufficiently large. Thus, for $w\in\partial \cD(w_0+1,\alpha+\epsi)$,
$$|(h_j(w)-w_0)-(w-1-w_0)|=|h_j(w)-(w-1)|<\alpha+\epsi=|w-1-w_0|.$$
By Rouch\'e's theorem, there is a unique $w\in \cD(w_0+1,\alpha+\epsi)$ satisfying $h_j(w)=w_0$.
\end{proof}

By Lemma \ref{lemma_preimages}, there is a subset $\cH_j\subset \cH(\lambda,\alpha/|c_j|,\nu_3-1)$ such that $h_j$ maps $\cH_j$ conformally onto $\cH(\lambda,\alpha/|c_j|,\nu_3)$. Let $\psi_j:\cH(\lambda,\alpha/|c_j|,\nu_3)\to \cH_j$ be the corresponding inverse function. The next Lemma yields that if $|\im w|$ is sufficiently large, then all iterates $\psi_j^n(w)$ are defined and tend to $\infty$ as $n\to\infty$ in a horizontal strip whose width is bounded independent of $w$.

\begin{lemma}   \label{lemma_preimages_seq}
Let $\alpha\in(0,1)$, $\epsi\in(0,1-\alpha)$ and $j\in\{1,...,d\}$. Then there exist $\nu_4>0$ and $C>0$ such that $\psi_j^n(w)$ is defined for all $w\in \cH(\lambda,\alpha/|c_j|,\nu_4)$ and all $n\in\dN$, and satisfies
\begin{enumerate}[(i)]
\item $\re\psi_j^n(w)\ge\re w+n(1-\alpha-\epsi);$
\item $|\psi_j^n(w)|\ge\max\{n,|w|\}\cdot\dfrac{1-\alpha-\epsi}{4}$;
\item $|\im\psi_j^n(w)-\im w|\le C$; 
\item $e^{-\re\psi_j^n(w)}|\psi_j^n(w)|^{\lambda}=O(e^{-n(1-\alpha-\epsi)/2})$.
\end{enumerate}
\end{lemma}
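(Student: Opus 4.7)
The plan is to apply Lemma \ref{lemma_preimages} iteratively. I choose $\nu_4 > \nu_3$ large enough that every closed disk $\overline{\cD(w+1,\alpha+\epsi)}$ with $w\in\cH(\lambda,\alpha/|c_j|,\nu_4)$ is contained in $\cH(\lambda,\alpha/|c_j|,\nu_3)$; this is possible by Lemma \ref{lemma_gamma}(iii), which says $\gamma_{\lambda,\alpha/|c_j|}'(y)\to 0$ as $|y|\to\infty$, so the boundary curve is nearly vertical far from the real axis. Lemma \ref{lemma_preimages} then yields that $\psi_j^n(w)$ is defined for every $n\in\dN$ and satisfies the step estimate $|\psi_j^n(w)-\psi_j^{n-1}(w)-1|\le\alpha+\epsi$. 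Taking real parts and summing immediately gives (i); combining (i) with the crude upper bound $|\psi_j^n(w)|\le|w|+n(1+\alpha+\epsi)$ coming from the same step estimate, together with $|\psi_j^n(w)|\ge|\im\psi_j^n(w)|\ge\nu_4$ and a short case split (according to whether $|\im w|$ or $\re w$ carries the bulk of $|w|$, and the sign of $\re w$), yields (ii), enlarging $\nu_4$ if necessary.

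The heart of the argument is (iii): the naive step bound only gives $|\im\psi_j^n(w)-\im w|\le n(\alpha+\epsi)$, which grows with $n$. To remove the $n$-dependence I use the full asymptotic from Lemma \ref{lemma_asymptotics_rechts}. Writing $h_j(w)=w-1+E(w)$ and setting $w^{(k)}:=\psi_j^k(w)$, the identity $h_j(w^{(k)})=w^{(k-1)}$ telescopes to
$$\im w^{(n)}-\im w=-\sum_{k=1}^n\im E(w^{(k)}).$$
I need the right-hand side to be bounded independently of $n$ and of $w\in\cH(\lambda,\alpha/|c_j|,\nu_4)$. The $O(|w^{(k)}|^{-1-1/d})$ piece of $E$ is summable because (ii) forces $|w^{(k)}|\gtrsim k$. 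The leading rational piece contributes $|\im(1/w^{(k)})|=|\im w^{(k)}|/|w^{(k)}|^2$; comparing the resulting series with $\int_0^\infty|\im w|/((\re w+t)^2+(\im w)^2)\,dt\le\pi$ shows that it is bounded by a constant independent of $w$, \emph{provided} a bootstrap assumption ensures $|\im w^{(k)}|$ stays comparable to $|\im w|$. For the exponential piece, the membership $w^{(k)}\in\cH(\lambda,\alpha/|c_j|,\nu_4)$ gives $e^{-\re w^{(k)}}|w^{(k)}|^\lambda\le\alpha/|c_j|$ by Remark \ref{remark_gamma}, and combining this with $\re w^{(k)}-\re w^{(k-1)}\ge 1-\alpha-\epsi$ and $|w^{(k)}|/|w^{(k-1)}|=1+O(1/|w^{(k-1)}|)$ shows that, for $k$ sufficiently large,
$$e^{-\re w^{(k)}}|w^{(k)}|^\lambda\le \frac{\alpha}{|c_j|}\,e^{-(k-1)(1-\alpha-\epsi)/2},$$
which is summable with a uniform constant. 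The bootstrap then closes and produces $C$.

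Property (iv) is precisely the exponential decay estimate just derived in handling the exponential term in (iii), so it requires no extra work. The main obstacle in the whole lemma is (iii): the $1/w^{(k)}$ contribution to $E$ is not absolutely summable without already knowing that $|\im w^{(k)}|$ remains bounded, and it is the self-referential bootstrap based on the integral comparison above that yields a constant $C$ uniform in $w$.
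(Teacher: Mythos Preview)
Your opening step contains a genuine circularity. You assert that by taking $\nu_4>\nu_3$ large enough, Lemma~\ref{lemma_preimages} alone guarantees that $\psi_j^n(w)$ is defined for every $n$. It does not: Lemma~\ref{lemma_gamma}(iii) controls only the curved part $\Gamma(\lambda,\alpha/|c_j|)$ of $\partial\cH$, not the horizontal lines $|\im w|=\nu_3$. The step bound $|\psi_j(w)-(w+1)|\le\alpha+\epsi$ permits $|\im\psi_j(w)|$ to drop by $\alpha+\epsi$ at each step, so after finitely many iterations the orbit can fall below $|\im w|=\nu_3$ and $\psi_j$ ceases to be defined. No single choice of $\nu_4$ fixes this without (iii). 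The paper resolves the circularity by proving well-definedness and (iii) in one induction: assuming $|\im\psi_j^{n-1}(w)-\im w|\le C$ gives $|\im\psi_j^{n-1}(w)|>\nu_3$ once $\nu_4>\nu_3+C$, so $\psi_j^n(w)$ exists, and then the step estimate closes the bound for $n$. Your ``bootstrap'' remark later is exactly this device, but it must govern the whole argument from the start, not be invoked only to tame the $1/w^{(k)}$ sum.

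Once the induction is set up correctly, your treatment of (iii) is a legitimate alternative to the paper's. For the $1/w^{(k)}$ contribution the paper splits at $n\approx|w|$: for $n\le|w|$ it uses (ii) to get $|1/\psi_j^n(w)|=O(1/|w|)$, summing to $O(1)$; for $n>|w|$ it bounds $|\im(1/\psi_j^n(w))|\le(|\im w|+C)/|\psi_j^n(w)|^2=O(|w|/n^2)$ and invokes $\sum_{n>|w|}|w|/n^2=O(1)$. Your arctan-type integral comparison also works, since the $\re w^{(k)}$ increase with gaps at least $\delta$, though you should handle separately the finitely many $k$ for which $\re w^{(k)}$ is near zero (the integrand is not monotone there). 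For (iv) the paper argues directly from (i) together with the crude bound $|\psi_j^n(w)|\le|w|+2n$, splitting on the sign of $\lambda$; your recursive ratio estimate is a valid variant. Your sketch of (ii) is vague; the paper's clean route is to split on $n\le|w|/2$ versus $n>|w|/2$, using $|\psi_j^n(w)|\ge|w|-2n$ in the first case and $|\psi_j^n(w)|\ge\re\psi_j^n(w)$ together with (i) in the second.
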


For the proof, we require the following Lemma.

 \begin{lemma}  \label{lemma_series_bound}
For all $n_0\in\dN$, we have
 $$\sum_{n=n_0}^\infty\frac{1}{k^2}\le\frac{2}{n_0}.$$
 \end{lemma}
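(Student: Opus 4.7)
The bound is a standard integral/telescoping estimate, and there is a typo in the summand (it should be $1/n^2$ rather than $1/k^2$, to match the summation index $n$). The plan is to dominate the tail of $\sum 1/n^2$ by a telescoping series.

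The key observation is that for every $n\ge1$ one has
$$\frac{1}{n^2}\le\frac{2}{n(n+1)}=2\left(\frac{1}{n}-\frac{1}{n+1}\right),$$
since $n(n+1)\le 2n^2$ is equivalent to $n+1\le 2n$, i.e.\ to $n\ge 1$. This is the crucial pointwise inequality that lets us compare the tail with a telescoping sum which evaluates exactly to $1/n_0$.

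Summing the inequality from $n=n_0$ to $\infty$ and using that the series $\sum_{n=n_0}^{\infty}\left(\tfrac{1}{n}-\tfrac{1}{n+1}\right)$ telescopes to $1/n_0$ yields
$$\sum_{n=n_0}^\infty\frac{1}{n^2}\le 2\sum_{n=n_0}^\infty\left(\frac{1}{n}-\frac{1}{n+1}\right)=\frac{2}{n_0},$$
which is the desired estimate. There is no real obstacle here; the only thing to verify is the pointwise inequality, which holds uniformly for all $n\ge 1$, so the bound is valid for every $n_0\in\dN$ (including $n_0=1$, in which case the bound reduces to the familiar $\pi^2/6\le 2$). Alternatively, one could use the integral comparison $\sum_{n=n_0}^\infty 1/n^2\le 1/n_0^2+\int_{n_0}^\infty dx/x^2=1/n_0^2+1/n_0\le 2/n_0$, but the telescoping proof is cleaner and avoids having to treat any special cases separately.
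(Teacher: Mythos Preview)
Your proof is correct. You use the telescoping bound $\dfrac{1}{n^2}\le\dfrac{2}{n(n+1)}=2\left(\dfrac{1}{n}-\dfrac{1}{n+1}\right)$, whereas the paper uses the integral comparison you mention at the end: it splits off the first term and bounds the rest by $\int_{n_0}^\infty t^{-2}\,dt$, obtaining $\dfrac{1}{n_0^2}+\dfrac{1}{n_0}\le\dfrac{2}{n_0}$. Both arguments are equally elementary; your telescoping version has the minor advantage of not needing to isolate the first term, while the paper's version is the one you flagged as an alternative.
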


\begin{proof}
We have
$$\sum_{n=n_0}^\infty\frac{1}{k^2}\le\frac{1}{n_0^2}+\sum_{k=n_0+1}^\infty\int_{k-1}^k\frac{1}{t^2} \,dt =\frac{1}{n_0^2}+\int_{n_0}^\infty\frac{1}{t^2} \,dt=\frac{1}{n_0^2}+\frac{1}{n_0}\le\frac{2}{n_0}.\qedhere$$
\end{proof}

\begin{proof}[Proof of Lemma \ref{lemma_preimages_seq}]
Let 
$$\delta:=1-\alpha-\epsi.$$
First note that if $\psi_j^n(w)$ is defined, then Lemma \ref{lemma_preimages} yields that $\psi_j^k(w)\in \cD(\psi_j^{k-1}(w)+1, \alpha+\epsi)$ for all $k\in\{1,...,n\}$, and hence 
$$\re \psi_j^n(w)\ge\re w+n\delta.$$
So $\psi_j^n(w)$ satisfies (i).
Also, if $n\le|w|/2$, then
\begin{equation}
|\psi_j^n(w)|\ge|w|-n(1+\alpha+\epsi)\ge|w|-\frac{|w|}{2}(1+\alpha+\epsi)=|w|\frac{\delta}{2}\ge n\delta.
\end{equation}
If $n>|w|/2$, then
\begin{equation}
|\psi_j^n(w)|\ge\re\psi_j^n(w)\ge\re w+n\delta\ge\lambda\log|w|-\log\frac{\alpha}{|c_j|}+n\delta\ge\frac{n\delta}{2}\ge\frac{|w|\delta}{4},
\end{equation}
provided $|w|$ and hence also $n$ is sufficiently large.
In particular, $\psi_j^n(w)$ satisfies (ii).

Let 
$$n_w:=\lfloor |w|\rfloor.$$
We will show by induction that if $w\in \cH(\lambda,\alpha/|c_j|,\nu_4)$ for sufficiently large $\nu_4>0$, then $\psi_j^n(w)$ is defined for all $n\in\dN$ and
\begin{equation}  \label{eq_preimages_induction}
|\im\psi_j^n(w)-\im w|\le C'\left(\min\left\{\frac{n}{|w|},1\right\}+n_w\sum_{k=n_w}^n\frac{1}{k^2}+\sum_{k=1}^n\frac{1}{k^{1+1/d}}+\sum_{k=1}^ne^{-k\delta/2}\right),
\end{equation}
where $C'$ does not depend on $w$ or $n$. Note that by Lemma \ref{lemma_series_bound},
\begin{equation}
\begin{split}
&C'\left(\min\left\{\frac{n}{|w|},1\right\}+n_w\sum_{k=n_w}^n\frac{1}{k^2}+\sum_{k=1}^n\frac{1}{k^{1+1/d}}+\sum_{k=1}^ne^{-k\delta/2}\right)\\
\le& C'\left(3+\sum_{k=1}^\infty\frac{1}{k^{1+1/d}}+\sum_{k=1}^\infty e^{-k\delta/2}\right)=:C<\infty.
\end{split}
\end{equation}

So \eqref{eq_preimages_induction} implies (iii). Clearly, \eqref{eq_preimages_induction} is true for $n=0$. Now suppose that \eqref{eq_preimages_induction} holds with $n$ replaced by $n-1$. By Lemma \ref{lemma_preimages}, $\psi_j^n(w)$ is defined if and only if $|\im\psi_j^{n-1}(w)|>\nu_3$. This is satisfied if $|\im w|>\nu_3+C$. By Lemma \ref{lemma_asymptotics_rechts}, 
\begin{equation}
\begin{split}
&|\im\psi_j^n(w)-\im\psi_j^{n-1}(w)|=|\im\psi_j^n(w)-\im h_j(\psi_j^n(w))|\\
&\le\left|\frac{2m+1-d}{2d}\im\left(\frac{1}{\psi_j^n(w)}\right)\right|+O\left(\frac{1}{|\psi_j^n(w)|^{1+1/d}}\right)+2|c_j|e^{-\re\psi_j^n(w)}|\psi_j^n(w)|^{\lambda},
\end{split}
\end{equation}
provided $|w|$ is sufficiently large. By (ii),
\begin{equation}
\frac{1}{|\psi_j^n(w)|^{1+1/d}}=O\left(\frac{1}{n^{1+1/d}}\right).
\end{equation}
If $n\le|w|$, then we estimate the first summand by
$$\left|\im\left(\frac{1}{\psi_j^n(w)}\right)\right|\le\frac{1}{|\psi_j^n(w)|}=O\left(\frac{1}{|w|}\right).$$
If $n>|w|$, then by Lemma \ref{lemma_preimages},  (ii) and the induction hypothesis,
\begin{equation}
\begin{split}
\left|\im\left(\frac{1}{\psi_j^n(w)}\right)\right|&=\frac{|\im \psi_j^n(w)|}{|\psi_j^n(w)|^2}\le\frac{|\im \psi_j^{n-1}(w)|+\alpha+\epsi}{|\psi_j^n(w)|^2}\\
&\le\frac{16(|\im w|+C+\alpha+\epsi)}{\delta^2n^2}\le\frac{17|w|}{\delta^2n^2}=n_w\cdot O\left(\frac{1}{n^2}\right),
\end{split}
\end{equation}
provided $|w|$ is sufficiently large. 

Moreover, if $\lambda\ge0$, then by (i), Lemma \ref{lemma_preimages} and Remark \ref{remark_gamma},
\begin{equation}
\begin{split}
|c_j|e^{-\re \psi_j^n(w)}|\psi_j^n(w)|^{\lambda}&\le|c_j|e^{-\re w}(|w|+n(1+\alpha+\epsi))^{\lambda}e^{-n\delta}\\
&\le \alpha|w|^{-\lambda}(|w|+n(1+\alpha+\epsi))^{\lambda}e^{-n\delta}\\
&=\alpha\left(1+\frac{n}{|w|}(1+\alpha+\epsi)\right)^{\lambda}e^{-n\delta}\\
&\le\alpha(1+n(1+\alpha+\epsi))^{\lambda}e^{-n\delta}=O(e^{-n\delta/2}),
\end{split}
\end{equation}
provided $|w|\ge1$. 
If $\lambda<0$, then by (i), (ii) and Remark \ref{remark_gamma},
\begin{equation}
|c_j|e^{-\re \psi_j^n(w)}|\psi_j^n(w)|^{\lambda}\le\left(\frac{\delta}{4}\right)^{\lambda}|w|^{\lambda}|c_j|e^{-\re w}e^{-n\delta}\le\left(\frac{\delta}{4}\right)^{\lambda}\alpha e^{-n\delta}.
\end{equation}
In particular, (iv) is satisfied. Also, if $n\le|w|$, then
$$\im\psi_j^n(w)-\im\psi_j^{n-1}(w)=O\left(\frac{1}{|w|}\right)+O\left(\frac{1}{n^{1+1/d}}\right)+O\left(e^{-n\delta/2}\right),$$
and if $n>|w|$, then
$$\im\psi_j^n(w)-\im\psi_j^{n-1}(w)=n_wO\left(\frac{1}{n^2}\right)+O\left(\frac{1}{n^{1+1/d}}\right)+O(e^{-n\delta/2}).$$
Thus, $\psi_j^n(w)$ satisfies \eqref{eq_preimages_induction}, and hence also (iii).
\end{proof}

We now estimate the derivative of $\psi_j^n$.

\begin{lemma}  \label{lemma_lower_bound_derivative}
Let $\alpha\in(0,1)$ and $j\in\{1,...,d\}$. There are $\nu_5>0$ and $B>0$ such that 
$$|(\psi_j^n)'(w)|\ge B$$
for all $w\in \cH(\lambda,\alpha/|c_j|, \nu_5)$ and all $n\in\dN$.
\end{lemma}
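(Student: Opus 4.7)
The plan is to use the chain rule and the asymptotic formula for $h_j'$ from Lemma \ref{lemma_asymptotics_derivative_rechts} together with the estimates from Lemma \ref{lemma_preimages_seq}, and to show that a certain infinite product converges.

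First I would write, using $\psi_j'(w) = 1/h_j'(\psi_j(w))$ and iterating the chain rule,
$$(\psi_j^n)'(w) = \prod_{k=0}^{n-1}\psi_j'(\psi_j^k(w)) = \prod_{k=1}^{n}\frac{1}{h_j'(\psi_j^k(w))},$$
so it suffices to bound the product $\prod_{k=1}^n |h_j'(\psi_j^k(w))|$ from above uniformly in $n$ and $w$.

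Next, I would choose $\epsi\in(0,1-\alpha)$ and take $\nu_5$ large enough that Lemmas \ref{lemma_asymptotics_derivative_rechts} and \ref{lemma_preimages_seq} apply, with all iterates $\psi_j^k(w)$ lying in the domain where the asymptotic formula for $h_j'$ holds. Using $|\vi_j(w)|^{d\lambda} = |w|^{\lambda}(1+o(1))$ as $w\to\infty$ in $\cG$, Lemma \ref{lemma_asymptotics_derivative_rechts} gives
$$|h_j'(\psi_j^k(w))| \le 1 + O\!\left(\frac{1}{|\psi_j^k(w)|^{1+1/d}}\right) + C_1|c_j|\,e^{-\re\psi_j^k(w)}|\psi_j^k(w)|^{\lambda}.$$
Applying part (ii) of Lemma \ref{lemma_preimages_seq} we get $|\psi_j^k(w)|\ge k(1-\alpha-\epsi)/4$, so the first error term is $O(k^{-(1+1/d)})$. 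Applying part (iv) of the same lemma, the second error term is $O(e^{-k(1-\alpha-\epsi)/2})$. Hence there are constants $C_2,C_3>0$ independent of $w$ and $n$ such that
$$|h_j'(\psi_j^k(w))| \le 1 + a_k, \qquad a_k := C_2\,k^{-(1+1/d)} + C_3\,e^{-k(1-\alpha-\epsi)/2}.$$

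Finally, since $1/d>0$ and $(1-\alpha-\epsi)/2>0$, both series $\sum_{k\ge1}k^{-(1+1/d)}$ and $\sum_{k\ge1}e^{-k(1-\alpha-\epsi)/2}$ converge. Using the inequality $1+a_k\le e^{a_k}$, we obtain
$$\prod_{k=1}^n |h_j'(\psi_j^k(w))| \le \exp\!\left(\sum_{k=1}^n a_k\right) \le \exp\!\left(\sum_{k=1}^\infty a_k\right) =: 1/B < \infty,$$
and therefore $|(\psi_j^n)'(w)| \ge B$, as required. The only subtle point is ensuring the asymptotics of $h_j'$ and the growth estimates on $\psi_j^k(w)$ kick in from the very first iterate, which is handled simply by taking $\nu_5$ sufficiently large so that $\psi_j^k(w)\in\cH(\lambda,\alpha/|c_j|,\nu_2)$ holds for all $k\ge 0$ whenever $w\in\cH(\lambda,\alpha/|c_j|,\nu_5)$; Lemma \ref{lemma_preimages_seq}(i) already gives $\re\psi_j^k(w)\ge\re w$, and (iii) controls the imaginary part by an absolute constant $C$, so choosing $\nu_5\ge\nu_2+C$ suffices.
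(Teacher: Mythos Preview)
Your proof is correct and follows essentially the same approach as the paper: both apply the chain rule to reduce to bounding $\prod_{k=1}^n|h_j'(\psi_j^k(w))|$, then use Lemma~\ref{lemma_asymptotics_derivative_rechts} together with parts (ii) and (iv) of Lemma~\ref{lemma_preimages_seq} to bound each factor by $1+O(k^{-(1+1/d)})+O(e^{-k(1-\alpha-\epsi)/2})$, and conclude from convergence of the resulting infinite product. Your additional remarks on the inequality $1+a_k\le e^{a_k}$ and on choosing $\nu_5$ large enough to keep all iterates in the domain of validity of the $h_j'$-asymptotics are minor elaborations of steps the paper leaves implicit.
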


\begin{proof}
We have
$$(\psi_j^n)'=\prod_{k=0}^{n-1}\psi_j'\circ\psi_j^k=\frac{1}{\prod_{k=0}^{n-1}h_j'\circ\psi_j^{k+1}}=\frac{1}{\prod_{k=1}^nh_j'\circ\psi_j^k}.$$
By Lemma \ref{lemma_asymptotics_derivative_rechts} and Lemma \ref{lemma_preimages_seq},
\begin{equation}
\begin{split}
|h_j'(\psi_j^{k}(w))|
&\le1+O\left(\frac{1}{|\psi_j^{k}(w)|^{1+1/d}}\right)+|c_j|e^{-\re\psi_j^{k}(w)}|\psi_j^{k}(w)|^{\lambda}\left(1+O\left(\frac{1}{|\psi_j^{k}(w)|^{1/d}}\right)\right)\\
&\le1+O\left(\frac{1}{k^{1+1/d}}\right)+O\left(e^{-k(1-\alpha-\epsi)/2}\right).
\end{split}
\end{equation}
Since the infinite product $\prod_{k=1}^\infty(1+O(1/k^{1+1/d})+O(e^{-k(1-\alpha-\epsi)/2}))$ converges, we obtain the desired conclusion.
\end{proof}

Recall that $\cF_j=\cF(f)\cap\cS_j$.

\begin{lemma}  \label{lemma_disk_q(F)}
For $j\in\{1,...,d\}$ and $k\in\dZ$, let $v_{j,k}$ be as in Lemma \ref{lemma_zeros} and $w_{j,k}:=v_{j,k}+1/26$. There is $\teta>0$ such that if $|k|$ is sufficiently large, then $\cD(\psi_j^n(w_{j,k}),\teta)\subset q(\cF_j)$ for all $n\in\dN$.
\end{lemma}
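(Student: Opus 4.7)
The strategy is to produce a disk of fixed radius around $w_{j,k}$ that lies simultaneously inside $q(\cA^*(z_{j,k}))$ (for an appropriate superattracting fixed point $z_{j,k}$) and inside the domain $\cH(\lambda,\alpha/|c_j|,\nu)$ of $\psi_j$ for some $\alpha\in(0,1)$, and then to pull it back by $\psi_j^n$ using the distortion bound of Lemma \ref{lemma_lower_bound_derivative} together with Koebe's theorem.

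By Lemma \ref{lemma_zeros}, for $|k|$ large there is a zero $z_{j,k}\in\cS_j$ of $g$ with $q(z_{j,k})=v_{j,k}+o(1)$. This $z_{j,k}$ is a superattracting fixed point of $f$, so Corollary \ref{cor_disk_attr_basin} yields
$$q(\cA^*(z_{j,k}))\supset\cD\!\left(q(z_{j,k}),\tfrac{1}{13}\right).$$
Since $w_{j,k}=v_{j,k}+1/26$, the triangle inequality gives $|w_{j,k}-q(z_{j,k})|\le 1/26+o(1)$, hence for all sufficiently large $|k|$ the disk $\cD(w_{j,k},1/27)$ lies in $q(\cA^*(z_{j,k}))\subset q(\cF_j)$.

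Next, fix any $\alpha\in(e^{-1/26},1)$. By Lemma \ref{lemma_gamma}(iv),
$$\gamma_{\lambda,\alpha/|c_j|}(y)-\gamma_{\lambda,1/|c_j|}(y)\longrightarrow\log(1/\alpha)<\tfrac{1}{26}$$
as $|y|\to\infty$, so the curve $\Gamma(\lambda,\alpha/|c_j|)$ sits to the right of $\Gamma(\lambda,1/|c_j|)$ by strictly less than $1/26$ in the horizontal direction, for large $|y|$. As $v_{j,k}\in\Gamma(\lambda,1/|c_j|)$ and $w_{j,k}=v_{j,k}+1/26$, this implies that for all sufficiently small $\theta'>0$ (independent of $k$) and all large $|k|$, the entire disk $\cD(w_{j,k},\theta')$ lies in $\cH(\lambda,\alpha/|c_j|,\nu)$, where $\nu$ is chosen large enough that Lemmas \ref{lemma_preimages_seq} and \ref{lemma_lower_bound_derivative} apply. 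Shrinking $\theta'$ further if needed so that $\theta'\le1/27$, we also have $\cD(w_{j,k},\theta')\subset q(\cF_j)$ by the previous paragraph.

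On $\cH(\lambda,\alpha/|c_j|,\nu)$ the function $\psi_j^n$ is holomorphic and univalent with $|(\psi_j^n)'|\ge B$. Because $\vi_j(\psi_j(w))$ is by construction the $f$-preimage of $\vi_j(w)$ lying in $\cS_j$, and $\cF(f)$ is completely invariant under $f$, induction on $n$ gives $\psi_j^n(\cD(w_{j,k},\theta'))\subset q(\cF_j)$. An application of the Koebe $1/4$-theorem (Lemma \ref{lemma_koebe}) to $\psi_j^n$ on $\cD(w_{j,k},\theta')$ then yields
$$\cD\!\left(\psi_j^n(w_{j,k}),\,\tfrac{B\theta'}{4}\right)\subset\psi_j^n(\cD(w_{j,k},\theta'))\subset q(\cF_j),$$
and we take $\teta:=B\theta'/4$.

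The key point to verify carefully is that the specific shift of $1/26$ reconciles two opposing constraints: it must be smaller than the radius $1/13$ from Corollary \ref{cor_disk_attr_basin} so that $w_{j,k}$ stays well inside $q(\cA^*(z_{j,k}))$ with room for a disk, but simultaneously larger than the asymptotic horizontal gap $\log(1/\alpha)$ between $\Gamma(\lambda,1/|c_j|)$ and $\Gamma(\lambda,\alpha/|c_j|)$ for some $\alpha<1$. The choice $\alpha\in(e^{-1/26},1)$ makes both happen, and the rest of the argument is a routine pull-back.
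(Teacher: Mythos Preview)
Your proof is correct and follows essentially the same approach as the paper's: locate a fixed-radius disk around $w_{j,k}$ inside both $q(\cA^*(z_{j,k}))$ (via Corollary~\ref{cor_disk_attr_basin}) and the domain $\cH(\lambda,\alpha/|c_j|,\nu)$ of $\psi_j$ (via Lemma~\ref{lemma_gamma}(iv)), then pull back by $\psi_j^n$ using Koebe and the derivative bound of Lemma~\ref{lemma_lower_bound_derivative}. The one place where your write-up is slightly looser than the paper's is the clause ``by construction the $f$-preimage of $\vi_j(w)$ lying in $\cS_j$'': from $h_j(\psi_j(w))=w$ you only get $q(f(\vi_j(\psi_j(w))))=w$, so $f(\vi_j(\psi_j(w)))$ could a priori land in any $\cS_l$; that it lands in $\cS_j$ (and hence equals $\vi_j(w)$) is exactly the content of Lemma~\ref{lemma_invariance_Sj}, which the paper invokes explicitly at this step.
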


\begin{remark}
For sufficiently large $|k|$, the point $v_{j,k}$ is close to $q(z_0)$ for some attracting fixed point $z_0$ of $f$. The function $\psi_j$ is not defined in $q(z_0)$ and $v_{j,k}$. Therefore, we introduce the point $w_{j,k}$ which is in the domain of definition of $\psi_j$ for large $|k|$ and also lies in $q(A^*(z_0))$.
 \end{remark}

\begin{proof}[Proof of Lemma \ref{lemma_disk_q(F)}]
By Lemma \ref{lemma_zeros}, there is a zero $z_0$ of $g$ satisfying $q(z_0)=v_{j,k}+o(1)$. Thus, $w_{j,k}=q(z_0)+1/26+o(1)$.  If $|k|$ is sufficiently large, we obtain 
$$\cD\left(w_{j,k},\frac{1}{27}\right)\subset \cD\left(q(z_0)+\frac{1}{26},\frac{1}{26}\right)\subset\cD\left(q(z_0),\frac{1}{13}\right).$$
By Corollary \ref{cor_disk_attr_basin}, this yields
$$\cD\left(w_{j,k},\frac{1}{27}\right)\subset q(\cA^*(z_0)).$$
Let $\nu>0$ be large and $\exp(-1/2(1/26-1/27))<\alpha<1.$ Then
\begin{equation}  \label{eq_curvedist}
2\log\frac{1}{\alpha}<\frac{1}{26}-\frac{1}{27}.
\end{equation}
Since $v_{j,k}\in\Gamma(\lambda,1/|c_j|)$, by \eqref{eq_curvedist} and Lemma \ref{lemma_gamma} (iv) and (iii), we get
$$\cD\left(w_{j,k},\frac{1}{27}\right)\subset \cH\left(\lambda,\frac{\alpha}{|c_j|},\nu_5\right)$$
if $|k|$ is sufficiently large. By Koebe's $1/4$-theorem and Lemma \ref{lemma_lower_bound_derivative},
$$\psi_j^n\left(\cD\left(w_{j,k},\frac{1}{27}\right)\right)\supset \cD\left(\psi_j^n(w_{j,k}),\frac{|(\psi_j^n)'(w_{j,k})|}{4\cdot27}\right)\supset \cD\left(\psi_j^n(w_{j,k}),\frac{B}{4\cdot27}\right).$$
With $\teta:=B/(4\cdot27)$ we thus have
$$h_j^n(\cD(\psi_j^n(w_{j,k}),\teta))\subset \cD\left(w_{j,k},\frac{1}{27}\right)\subset q(\cF_j).$$
Since by Lemma \ref{lemma_invariance_Sj}, 
$$h_j^n(w)=(q\circ f\circ\vi_j)^n(w)=(q\circ f^n\circ\vi_j)(w)$$
for $w\in \cH(\lambda,\alpha/|c_j|,\nu_0)$, this implies
$$\cD(\psi_j^n(w_{j,k}),\teta)\subset q(\cF_j),$$
provided $|k|$ is sufficiently large.
\end{proof}

The final result of this section says that $q(\cF_j)$ has positive density in rectangles of sufficiently large side lengths that are  contained in $\cH(\lambda,1/|c_j|,\nu)$.

\begin{lemma}   \label{lemma_F_rechts}
There are $D_0,\nu,\eta_0>0$ such that for all $j\in\{1,...,d\}$ and any rectangle $\cR\subset \cH(\lambda,1/|c_j|,\nu)$ with sides parallel to the real and imaginary axis whose vertical and horizontal side lengths are both at least $D_0$, we have
$$\dens(q(\cF_j), \cR)\ge\eta_0.$$
\end{lemma}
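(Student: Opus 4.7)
My plan is to deduce the result from Lemma \ref{lemma_disk_q(F)} via a tiling argument. Lemma \ref{lemma_disk_q(F)} already supplies a disk $\cD(\psi_j^n(w_{j,k}),\teta)\subset q(\cF_j)$ of fixed radius $\teta$ for every pair $(n,k)$ with $|k|$ sufficiently large. The task is therefore to show that these centres $\psi_j^n(w_{j,k})$ are distributed densely enough that every sufficiently large rectangle $\cR\subset \cH(\lambda,1/|c_j|,\nu)$ contains at least one centre per "tile" of fixed size.

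The ingredients are already in place: (a) for each fixed $k$, consecutive real parts $\re\psi_j^n(w_{j,k})$ differ by a value in $[1-\alpha-\epsi,1+\alpha+\epsi]$ (Lemma \ref{lemma_preimages}); (b) $|\im\psi_j^n(w_{j,k})-\im v_{j,k}|\le C$ uniformly in $n$ (Lemma \ref{lemma_preimages_seq}\,(iii)); (c) the starting heights $\im v_{j,k}$ are spaced by $2\pi$ for $|k|$ large (Lemma \ref{lemma_zeros}); (d) the points $v_{j,k}$ lie on $\Gamma(\lambda,1/|c_j|)$, which bounds $\cH(\lambda,1/|c_j|,\nu)$ from the left. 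I fix $\alpha\in(0,1)$ and $\epsi\in(0,1-\alpha)$ and then take $\nu$ large enough for all these estimates and the surrounding asymptotics to be in force, and also so that any $k$ whose $\im v_{j,k}$ enters a rectangle $\subset\cH(\lambda,1/|c_j|,\nu)$ automatically satisfies the $|k|$-large hypothesis of Lemma \ref{lemma_disk_q(F)}.

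I first establish a local claim: with $D_*:=2\teta+2C+2\pi$, every axis-parallel square $\cR'=[a',a'+D_*]\times[b',b'+D_*]\subset\cH(\lambda,1/|c_j|,\nu)$ fully contains at least one of the disks $\cD(\psi_j^n(w_{j,k}),\teta)$. Indeed, by (c) the subinterval $[b'+\teta+C,\,b'+D_*-\teta-C]$ has length at least $2\pi$ and hence contains some $\im v_{j,k}$; by (d) and $\cR'\subset\cH(\lambda,1/|c_j|,\nu)$, this forces $\re v_{j,k}\le a'$. The sequence $\re\psi_j^n(w_{j,k})$ therefore starts at a value $\le a'+1/26$ and, by (a), increases monotonically with gaps $\le 1+\alpha+\epsi<D_*-2\teta$, so some index $n$ satisfies $\re\psi_j^n(w_{j,k})\in[a'+\teta,\,a'+D_*-\teta]$. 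For this same $n$, (b) combined with the choice of $k$ places $\im\psi_j^n(w_{j,k})$ in $[b'+\teta,\,b'+D_*-\teta]$, whence $\cD(\psi_j^n(w_{j,k}),\teta)\subset\cR'$.

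For a general rectangle $\cR=[a,a+A]\times[b,b+B]\subset\cH(\lambda,1/|c_j|,\nu)$ with $A,B\ge D_0:=2D_*$, I partition $\cR$ into at least $\lfloor A/D_*\rfloor\cdot\lfloor B/D_*\rfloor\ge AB/(4D_*^2)$ pairwise disjoint $D_*\times D_*$ sub-squares, each still contained in $\cH(\lambda,1/|c_j|,\nu)$. The local claim provides one disk of area $\pi\teta^2$ inside $q(\cF_j)$ per sub-square, and these contributions are disjoint; summing gives
$$\meas(q(\cF_j)\cap\cR)\;\ge\;\frac{\pi\teta^2}{4D_*^2}\,\meas(\cR),$$
yielding $\eta_0:=\pi\teta^2/(4D_*^2)>0$. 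Finiteness of $d$ makes all constants uniform in $j\in\{1,\dots,d\}$. I anticipate no significant obstacle: every quantitative ingredient has already been assembled in the preceding lemmas, and the proof is essentially bookkeeping that exploits the approximate "lattice" structure of the points $\psi_j^n(w_{j,k})$ inside $\cH(\lambda,1/|c_j|,\nu)$.
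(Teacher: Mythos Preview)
Your proposal is correct and follows essentially the same approach as the paper: both arguments choose $k$ so that $\im v_{j,k}$ lands in a suitable subinterval of the vertical range of the rectangle (using the $2\pi$-spacing of the $\im v_{j,k}$), then choose $n$ so that $\re\psi_j^n(w_{j,k})$ lands in the horizontal range (using the step bounds from Lemma~\ref{lemma_preimages}), and invoke Lemma~\ref{lemma_preimages_seq}(iii) to control the imaginary part. Your tile size $D_*=2\teta+2C+2\pi$ coincides with the paper's $D_0$, and your $\eta_0=\pi\teta^2/(4D_*^2)$ matches the paper's value; the only cosmetic difference is that the paper first treats rectangles with side lengths in $[D_0,2D_0]$ and then decomposes, whereas you tile by exact $D_*\times D_*$ squares and set $D_0:=2D_*$.
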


\begin{proof}
First suppose that 
\begin{equation}  \label{eq_rectangle}
\cR=\{w:\,x_1\le\re w\le x_2,\,y_1\le\im \le y_2\}
\end{equation}
where
\begin{equation}  \label{eq_small_sidelength}
2\pi+2(C+\teta)\le x_2-x_1, y_2-y_1\le2(2\pi+2(C+\teta)),
\end{equation}
with $C$ as in Lemma \ref{lemma_preimages_seq} and $\teta$ as in Lemma \ref{lemma_disk_q(F)}.
Let $v_{j,k}$ be as in Lemma \ref{lemma_zeros} and $w_{j,k}=v_{j,k}+1/26$. There is $k\in\dZ$ such that $y_1+C+\teta\le\im w_{j,k}=\im v_{j,k}\le y_2-C-\teta$. Also, by Lemma \ref{lemma_preimages}, there is $n\in\dN$ such that $x_1+\teta<\re\psi_j^n(w_{j,k})<x_2-\teta$.  By Lemma \ref{lemma_preimages_seq}, we have $y_1+\teta\le\im\psi_j^n(w_{j,k})\le y_2-\teta$. Thus,
$$\cD(\psi_j^n(w_{j,k}),\teta)\subset \cR.$$ 
Also, by Lemma \ref{lemma_disk_q(F)}, 
$$\cD(\psi_j^n(w_{j,k}),\teta)\subset q(\cF_j).$$
We obtain
$$\dens(q(\cF_j), \cR)\ge\frac{\meas(\cD(\psi_j^n(w_{j,k}),\teta))}{\meas \cR}\ge\frac{\pi\teta^2}{4(2\pi+2(C+\teta))^2}=:\eta_0.$$
Now, if $\cR\subset \cH(\lambda,1/|c_j|,\nu)$ is any rectangle whose horizontal and vertical side length both exceed $D_0:=2\pi+2(C+\teta)$, then $\cR$ can be written as the union of rectangles of the form \eqref{eq_rectangle} that satisfy \eqref{eq_small_sidelength} and have pairwise disjoint interior, whence the claim follows.
\end{proof}


\section{The set \texorpdfstring{$q(\cF(f))$}{q(F(f))}: second part}   \label{sec_q(F)2}

In this section, we investigate the density of $q(\cF(f))$ in subsets of $\cH(\lambda-1,\alpha_1/|c_j|,\nu)\setminus \cH(\lambda,\beta_1/|c_j|,\nu)$ for small $\alpha_1>0$ and large $\beta_1>0$. 

We first give an approximate expression for  $h_j$ in $\cH(\lambda-1,\alpha_1/|c_j|,\nu)\setminus \cH(\lambda,\beta_1/|c_j|,\nu)$.

\begin{lemma}   \label{lemma_asymptotics_mitte}
Let $\epsi>0$ and $j\in\{1,...,d\}$. Then there are $\alpha_1,\beta_1,\nu>0$ such that for all $w\in \cH(\lambda-1,\alpha_1/|c_j|,\nu)\setminus \cH(\lambda,\beta_1/|c_j|,\nu)$, we have
$$\left|\frac{h_j(w)-w}{-c_je^{-w}\vi_j(w)^{d\lambda}}-1\right|<\epsi.$$
\end{lemma}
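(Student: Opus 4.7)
I will reuse the exact polynomial identity derived at the start of the proof of Lemma~\ref{lemma_asymptotics_rechts}, namely
$$h_j(w) = w - \eta(w) + \sum_{k=2}^{d}\frac{(-1)^k}{k!}\,\frac{q^{(k)}(\vi_j(w))}{q'(\vi_j(w))^{k}}\,\eta(w)^k,$$
with
$$\eta(w) = 1 + \frac{\lambda}{w} + O\left(\frac{1}{|w|^{1+1/d}}\right) + c_je^{-w}\frac{q'(\vi_j(w))}{p(\vi_j(w))},$$
which holds throughout $\cG$ (not merely in $\cH(\lambda,2/|c_j|,\nu_1)$) since $q$ is a polynomial of degree $d$. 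Writing $E(w):=c_je^{-w}\vi_j(w)^{d\lambda}$, relation \eqref{eq_pq'vi} gives $c_je^{-w}q'(\vi_j(w))/p(\vi_j(w))=E(w)(1+O(|w|^{-1/d}))$, so $E(w)$ is (up to $1+o(1)$) the quantity in the denominator of the conclusion, and the task reduces to proving $h_j(w)-w=-E(w)(1+o(1))$ with the $o(1)$ controlled by $\alpha_1,\beta_1,\nu$.

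\textbf{Quantitative bounds on $E$ in the region.} The two defining conditions of $\cH(\lambda-1,\alpha_1/|c_j|,\nu)\setminus\cH(\lambda,\beta_1/|c_j|,\nu)$, together with Remark~\ref{remark_gamma} and $|\vi_j(w)|^{d\lambda}=|w|^{\lambda}(1+O(|w|^{-1/d}))$, yield
$$\tfrac{1}{2}\beta_1\le|E(w)|\le 2\alpha_1|w|$$
for $\nu$ sufficiently large. Since the non-exponential part of $\eta$ is $O(1)$, it follows that $\eta(w)=E(w)(1+O(1/\beta_1))$ and $|\eta(w)|/|w|\le 3\alpha_1$ once $\nu$ (and hence $|w|$) is large enough and $\beta_1$ is large enough.

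\textbf{Taming the Taylor tail.} For each $k\in\{2,\ldots,d\}$, a direct computation gives $q^{(k)}(\vi_j(w))/q'(\vi_j(w))^k=O(|w|^{1-k})$, hence
$$\left|\frac{q^{(k)}(\vi_j(w))}{q'(\vi_j(w))^k}\eta(w)^k\right|=O(|\eta(w)|)\cdot\left(\frac{|\eta(w)|}{|w|}\right)^{k-1}\le O(|E(w)|)\cdot(3\alpha_1)^{k-1}.$$
Summing over the finitely many values $k=2,\ldots,d$ (a geometric-type sum with ratio $3\alpha_1<1$) gives a total tail contribution of $O(\alpha_1)|E(w)|$. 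Combined with $-\eta(w)=-E(w)(1+O(1/\beta_1))$ and the factor $1+O(|w|^{-1/d})$ that relates $E(w)$ to $c_je^{-w}\vi_j(w)^{d\lambda}$, this produces
$$\frac{h_j(w)-w}{-c_je^{-w}\vi_j(w)^{d\lambda}}=1+O(\alpha_1)+O(1/\beta_1)+O(|w|^{-1/d}).$$
Given $\epsi>0$, one then fixes $\alpha_1$ small and $\beta_1$ large so that each of the first two error terms is at most $\epsi/3$, and chooses $\nu$ large enough that the remaining $O(|w|^{-1/d})$ term is also at most $\epsi/3$.

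\textbf{Main obstacle.} The essential new difficulty compared to Lemma~\ref{lemma_asymptotics_rechts} is that $\eta(w)$ need no longer be bounded in our region; it can grow linearly in $|w|$, so the Taylor tail cannot be treated as a perturbation $O(1/w)$. The saving balance is that the tail coefficients decay like $|w|^{1-k}$, which precisely matches $|\eta|^k\lesssim(\alpha_1|w|)^k$, leaving a geometric factor $\alpha_1^{k-1}$ per term. Making this trade-off rigorous — $\alpha_1$ small enough to dominate the unbounded tail while $\beta_1$ is large enough to guarantee that $\eta$ is well-approximated by $E$ — is the only non-routine point.
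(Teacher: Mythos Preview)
Your proof is correct and follows essentially the same route as the paper: both use the degree-$d$ Taylor expansion of $q$ at $\vi_j(w)$, the two-sided bound $\tfrac{1}{2}\beta_1\le|E(w)|\le 2\alpha_1|w|$ coming from the defining inequalities of the region, and the observation that the $k$th Taylor coefficient behaves like $|w|^{1-k}$ so that the tail contributes a geometric series in $\alpha_1$. The only cosmetic difference is that the paper works directly with $f(\vi_j(w))-\vi_j(w)$ rather than with $\eta(w)=-q'(\vi_j(w))(f(\vi_j(w))-\vi_j(w))$; your identification of the main obstacle (that $\eta$ is unbounded here, unlike in Lemma~\ref{lemma_asymptotics_rechts}) is exactly the point.
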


\begin{proof}
Taylor expansion of $q$ around $\vi_j(w)$ yields
\begin{equation}
h_j(w)=q(f(\vi_j(w)))=w+\sum_{k=1}^d\frac{q^{(k)}(\vi_j(w))}{k!}(f(\vi_j(w))-\vi_j(w))^k.
\end{equation}
Thus,
\begin{equation}  \label{eq_asymptmitte_approach}
\begin{split}
&\frac{h_j(w)-w}{-c_je^{-w}\vi_j(w)^{d\lambda}}-1\\
&=\frac{q'(\vi_j(w))(f(\vi_j(w))-\vi_j(w))}{-c_je^{-w}\vi_j(w)^{d\lambda}}-1+\sum_{k=2}^d\frac{q^{(k)}(\vi_j(w))}{k!}\frac{(f(\vi_j(w))-\vi_j(w))^k}{-c_je^{-w}\vi_j(w)^{d\lambda}}.
\end{split}
\end{equation}
By Corollary \ref{cor_asymptotics_f}, 
\begin{equation}  \label{eq_asympt_f_ref}
f(\vi_j(w))=\vi_j(w)-\frac{1}{q'(\vi_j(w))}\left(1+O\left(\frac{1}{|w|}\right)+c_je^{-w}\vi_j(w)^{d\lambda}\left(1+O\left(\frac{1}{|w|^{1/d}}\right)\right)\right)
\end{equation}
as $w\to\infty$. 
Hence,
\begin{equation}   \label{eq_asympt_summand2}
\frac{q'(\vi_j(w))(f(\vi_j(w))-\vi_j(w))}{-c_je^{-w}\vi_j(w)^{d\lambda}}-1=\frac{1+O(1/|w|)}{c_je^{-w}\vi_j(w)^{d\lambda}}+O\left(\frac{1}{|w|^{1/d}}\right).
\end{equation}
For $w\in \dC\setminus\cH(\lambda,\beta_1/|c_j|,\nu)$ with $|\im w|\ge\nu$, we have
$$\left|c_je^{-w}\vi_j(w)^{d\lambda}\right|\ge\frac{\beta_1}{2},$$
provided $\nu$ is sufficiently large. Inserting this into \eqref{eq_asympt_summand2} yields
\begin{equation}  \label{eq_summand2_final}
\left|\frac{q'(\vi_j(w))(f(\vi_j(w))-\vi_j(w))}{-c_je^{-w}\vi_j(w)^{d\lambda}}-1\right|\le\frac{3}{\beta_1}+O\left(\frac{1}{|w|^{1/d}}\right)<\frac{\epsi}{d}
\end{equation}
if $\beta_1$ and $|w|$ are sufficiently large.

Also, for $w\in\cH(\lambda-1,\alpha_1/|c_j|,\nu)$, we have
$$\left|c_je^{-w}\vi_j(w)^{d(\lambda-1)}\right|\le2\alpha_1,$$
provided $\nu$ is sufficiently large. By \eqref{eq_asympt_f_ref}, this yields
\begin{equation}  \label{eq_asymptmitte_est2}
|f(\vi_j(w))-\vi_j(w)|\le\frac{1}{|q'(\vi_j(w))|}\left(1+O\left(\frac{1}{|w|}\right)+3\alpha_1\vi_j(w)^d\right)\le\frac{4}{d}\alpha_1|\vi_j(w)|,
\end{equation}
provided $|w|$ is sufficiently large.
If $k\ge2$, then by \eqref{eq_summand2_final} and \eqref{eq_asymptmitte_est2}, we have
\begin{align}
&\left|\frac{q^{(k)}(\vi_j(w))}{k!}\cdot\frac{(f(\vi_j(w))-\vi_j(w))^k}{-c_je^{-w}\vi_j(w)^{d\lambda}}\right|\\
&=\left|\frac{q'(\vi_j(w))(f(\vi_j(w))-\vi_j(w))}{-c_je^{-w}\vi_j(w)^{d\lambda}}\right|\cdot\left|\frac{q^{(k)}(\vi_j(w))}{k!q'(\vi_j(w))}\right|\cdot|f(\vi_j(w))-\vi_j(w)|^{k-1}\\
&\le\left(1+\frac{\epsi}{d}\right)\cdot\left|\frac{q^{(k)}(\vi_j(w))}{k!q'(\vi_j(w))}\right|\cdot\left(\frac{4}{d}\alpha_1|\vi_j(w)|\right)^{k-1}\\
&\le\left(1+\frac{\epsi}{d}\right)\binom{d}{k}\frac{2}{d}|\vi_j(w)|^{-k+1}\left(\frac{4}{d}\alpha_1|\vi_j(w)|\right)^{k-1}\\
&=\left(1+\frac{\epsi}{d}\right)\binom{d}{k}\frac{2\cdot4^{k-1}}{d^k}\alpha_1^{k-1}<\frac{\epsi}{d} \label{eq_asymptmitte_final}
\end{align}
if $|w|$ is sufficiently large and $\alpha_1$ is sufficiently small. Inserting \eqref{eq_summand2_final} and \eqref{eq_asymptmitte_final} into \eqref{eq_asymptmitte_approach} yields the desired conclusion.
\end{proof}

We will now proceed as follows. First, we show that $h_j$ maps the intersection of certain horizontal strips with $\cH(\lambda-1,\alpha_1/|c_j|,\nu)\setminus\cH(\lambda,\beta_1/|c_j|,\nu)$ into $\cH(\lambda,1/c^*,\nu)$ where $c^*=\max_l|c_l|$. The idea is that if $\im w$ lies in certain intervals, then the argument of $-c_je^{-w}\vi_j(w)^{d\lambda}$ is small, and using that $h_j(w)\approx w-c_je^{-w}\vi_j(w)^{d\lambda}$ by Lemma \ref{lemma_asymptotics_mitte}, one can deduce that $\re h_j(w)$ is large. By Section \ref{sec_q(F)1}, the set $q(\cF(f))$ has positive density in large bounded subsets of $\cH(\lambda,1/c^*,\nu)$. Together with the invariance of $\cF(f)$ under $f$, we deduce that $q(\cF(f))$ has positive density in large bounded subsets of $\cH(\lambda-1,\alpha_1/|c_j|,\nu)\setminus\cH(\lambda,\beta_1/|c_j|,\nu)$. 

The next Lemma deals with the mapping behaviour of $f$ in certain horizontal strips in $\cH(\lambda-1,\alpha_1/|c_j|,\nu)\setminus\cH(\lambda,\beta_1/|c_j|,\nu)$.
 For $j\in\{1,...,d\}$ and $n\in\dZ$, let
$$y_{n}^j:=\begin{cases}\arg(-c_j)+\lambda(\pi/2+2\pi(j-1))+2n\pi&\text{if }n\ge0\\
\arg(-c_j)+\lambda(-\pi/2+2\pi j)+2n\pi&\text{if }n<0.\end{cases}$$

\begin{lemma}  \label{lemma_map_mitte}
Let $\epsi\in(0,\pi/4)$. Then there are $\alpha_1, \beta_1, \nu>0$ such that the following holds. Let $j\in\{1,...,d\}$. Suppose that $w$ lies in the closure of $\cH(\lambda-1,\alpha_1/|c_j|,\nu)\setminus\cH(\lambda,\beta_1/|c_j|,\nu)$ and there exists $n\in\dZ$ with 
$|\im w-y_n^j|\le\pi/4.$
Let $\beta\ge\beta_1$ such that $w\in\Gamma(\lambda,\beta/|c_j|)$, and let $\theta:=\im w-y_n^j$. 
Then,
$$|h_j(w)-w|\le(1+\epsi)\beta,$$
$$(1-\epsi)\beta\cos(|\theta|+\epsi)\le\re(h_j(w)-w)\le(1+\epsi)\beta$$
and
$$(1-\epsi)\beta\sin(|\theta|-\epsi)\le|\im(h_j(w)-w)|\le(1+\epsi)\beta\sin(|\theta|+\epsi).$$
\end{lemma}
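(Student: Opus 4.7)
The strategy is to reduce everything to the principal term supplied by Lemma~\ref{lemma_asymptotics_mitte} and then pass from polar to Cartesian form. Fix a small $\epsi' > 0$ to be chosen in terms of $\epsi$, and invoke Lemma~\ref{lemma_asymptotics_mitte} with $\epsi'$ in place of $\epsi$ to pick $\alpha_1, \beta_1, \nu > 0$ so that, throughout the region in question,
$$h_j(w) - w = -c_j e^{-w}\vi_j(w)^{d\lambda}(1 + \rho(w)), \qquad |\rho(w)| < \epsi'.$$
The modulus is controlled by Remark~\ref{remark_gamma} together with the asymptotics of $\vi_j$: since $w \in \Gamma(\lambda, \beta/|c_j|)$, we have $|e^{-w}| = (\beta/|c_j|)|w|^{-\lambda}$, and $q(\vi_j(w)) = w$ together with $q(z) = z^d + O(z^{d-1})$ gives $|\vi_j(w)|^{d\lambda} = |w|^\lambda(1 + o(1))$. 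Hence $|c_j e^{-w}\vi_j(w)^{d\lambda}| = \beta(1 + o(1))$, and after enlarging $\nu$ we may assume this quantity lies in $[(1 - \epsi')\beta, (1 + \epsi')\beta]$ throughout the region.

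The core computation concerns the argument of $-c_j e^{-w}\vi_j(w)^{d\lambda}$. Modulo $2\pi$,
$$\arg\bigl(-c_j e^{-w}\vi_j(w)^{d\lambda}\bigr) \equiv \arg(-c_j) - \im w + d\lambda \arg \vi_j(w).$$
Because $\re w = O(\log|w|)$ in the given region, one has $\arg w = \sgn(\im w)\,\pi/2 + o(1)$, and Lemma~\ref{lemma_change_of_var} yields
$$\arg \vi_j(w) \equiv \begin{cases} \dfrac{\pi}{2d} + \dfrac{2\pi(j-1)}{d} + o(1) & \text{if } \im w > 0, \\[2pt] -\dfrac{\pi}{2d} + \dfrac{2\pi j}{d} + o(1) & \text{if } \im w < 0, \end{cases} \pmod{2\pi}.$$
Writing $\im w = y_n^j + \theta$ and substituting the two-case definition of $y_n^j$, all constants $\arg(-c_j)$ and $\lambda(\pi/2 + 2\pi(j-1))$ or $\lambda(-\pi/2 + 2\pi j)$ cancel, leaving
$$\arg\bigl(-c_j e^{-w}\vi_j(w)^{d\lambda}\bigr) \equiv -\theta + o(1) \pmod{2\pi}.$$
Combined with the perturbation $(1 + \rho(w))$, this gives $\arg(h_j(w) - w) = -\theta + \sigma$ with $|\sigma|$ arbitrarily small upon shrinking $\epsi'$ and enlarging $\nu$.

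Finally, writing $h_j(w) - w = re^{i(-\theta + \sigma)}$ with $r \in [(1 - \epsi'')\beta, (1 + \epsi'')\beta]$ and $|\sigma| < \epsi''$ for some $\epsi'' < \epsi$ chosen small, and observing that $|\theta| + |\sigma| \le \pi/4 + \epsi'' < \pi/2$, the three claimed inequalities follow by the elementary bounds $\cos(-\theta + \sigma) \ge \cos(|\theta| + |\sigma|)$ and $\sin(|\theta| - |\sigma|) \le |\sin(-\theta + \sigma)| \le \sin(|\theta| + |\sigma|)$ upon taking real and imaginary parts of $re^{i(-\theta + \sigma)}$. The main obstacle is the argument computation of the second paragraph: one must verify that the piecewise definition of $y_n^j$ is precisely tuned to absorb both the $\arg(-c_j)$ term and the branch-dependent constants coming from $d\lambda \arg \vi_j(w)$ in each half-plane, leaving exactly $-\theta$ modulo $2\pi$. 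Once this alignment is established, the remainder is routine modulus estimation and trigonometry.
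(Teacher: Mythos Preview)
Your proposal is correct and follows essentially the same route as the paper: invoke Lemma~\ref{lemma_asymptotics_mitte} to isolate the principal term $-c_je^{-w}\vi_j(w)^{d\lambda}$, read off its modulus from $w\in\Gamma(\lambda,\beta/|c_j|)$, compute its argument as $-\theta+o(1)$ via the branch-dependent value of $\arg\vi_j(w)$ and the tailored definition of $y_n^j$, and then take real and imaginary parts. The paper organises the final step slightly differently, bounding $|\arg(h_j(w)-w)|$ between $|\theta|-\epsi$ and $|\theta|+\epsi$ rather than writing $-\theta+\sigma$, but this is cosmetic.
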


\begin{proof}
 By Lemma \ref{lemma_asymptotics_mitte}, 
\begin{equation}  \label{eq_hilfslemma_asympt_mitte}
\left|\frac{h_j(w)-w}{-c_je^{-w}\vi_j(w)^{d\lambda}}-1\right|\le\frac{\epsi}{2},
\end{equation}
provided $\alpha_1$ is sufficiently small and $\beta_1$ and $\nu$ are sufficiently large. Thus,
$$\left(1-\frac{\epsi}{2}\right)\left|c_je^{-w}\vi_j(w)^{d\lambda}\right|\le|h_j(w)-w|\le\left(1+\frac{\epsi}{2}\right)\left|c_je^{-w}\vi_j(w)^{d\lambda}\right|.$$
Since $w\in\Gamma(\lambda,\beta/|c_j|)$, this yields
$$(1-\epsi)\beta\le|h_j(w)-w|\le(1+\epsi)\beta$$
if $\nu$ is sufficiently large. Also, by \eqref{eq_hilfslemma_asympt_mitte},
\begin{equation}  \label{eq_argasympt}
\left|\arg\left(\frac{h_j(w)-w}{-c_je^{-w}\vi_j(w)^{d\lambda}}\right)\right|\le\arcsin\left(\frac{\epsi}{2}\right)\le\frac{\pi}{4}\epsi.
\end{equation}
We have
$$\arg w=\arg q(\vi_j(w))\equiv\arg(\vi_j(w)^d(1+o(1)))\equiv d\arg\vi_j(w)+o(1)\mod2\pi$$
as $w\to\infty$. Since $w$ lies in the closure of $\cH(\lambda-1,\alpha_1/|c_j|,\nu)\setminus\cH(\lambda,\beta_1/|c_j|,\nu)$, we have
$$\arg w=\sgn(\im(w))\frac{\pi}{2}+o(1)=\sgn(n)\frac{\pi}{2}+o(1)$$
if $|n|$ is sufficiently large. Hence,
$$\arg\vi_j(w)\equiv\sgn(n)\frac{\pi}{2d}+o(1)\mod\frac{2\pi}{d}.$$
Since $\vi_j(w)\in \cS_j$, we obtain
$$\arg\vi_j(w)\equiv\begin{cases}\pi/(2d)+2\pi(j-1)/d+o(1)&\text{if }n>0\\
-\pi/(2d)+2\pi j/d+o(1)&\text{if }n<0\end{cases}\mod2\pi.$$
Thus,
\begin{equation}
\begin{split}
\arg\left(-c_je^{-w}\vi_j(w)^{d\lambda}\right)&\equiv\arg(-c_j)-\im(w)+d\lambda\arg\vi_j(w)\\
&\equiv-\theta-2n\pi+o(1)\equiv-\theta+o(1)\mod2\pi.
\end{split}
\end{equation}
By \eqref{eq_argasympt}, this implies
$$|\theta|-\epsi\le|\arg(h_j(w)-w)|\le|\theta|+\epsi$$
if $|w|$ is sufficiently large. We obtain
\begin{align*}
\re(h_j(w)-w)&\le|h_j(w)-w|\le(1+\epsi)\beta,\\
\re(h_j(w)-w)&=|h_j(w)-w|\cos(\arg(h_j(w)-w))\ge(1-\epsi)\beta\cos(|\theta|+\epsi),\\
|\im(h_j(w)-w)|&=|h_j(w)-w|\cdot|\sin(\arg(h_j(w)-w))|\le(1+\epsi)\beta\sin(|\theta|+\epsi),\\
|\im(h_j(w)-w)|&=|h_j(w)-w|\cdot|\sin(\arg(h_j(w)-w))|\ge(1-\epsi)\beta\sin(|\theta|-\epsi).\qedhere
\end{align*}
\end{proof}

Let
\begin{equation}  \label{eq_cstar}
c^*:=\max_{1\le l\le d}|c_l|.
\end{equation}
The following Lemma says that $h_j$ maps the intersection of $\cH(\lambda-1,\alpha_1/|c_j|,\nu)\setminus\cH(\lambda,\beta_1/|c_j|,\nu)$ with certain horizontal strips into $\cH(\lambda,1/c^*,\nu)$.

\begin{lemma}   \label{lemma_h(Q)inH}
There are $\alpha_1,\beta_1,\nu>0$ such that for all $j\in\{1,...,d\}, n\in\dZ$ and all $w$ in the closure of $\cH(\lambda-1,\alpha_1/|c_j|,\nu)\setminus\cH(\lambda,\beta_1/|c_j|,\nu)$ with $|\im w-y_n^j|\le\pi/4$, we have $h_j(w)\in\cH(\lambda,1/c^*,\nu)$.
\end{lemma}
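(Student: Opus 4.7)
The plan is to translate both defining conditions of $\cH(\lambda,1/c^*,\nu)$ into inequalities in the parameter $\beta:=|c_j|e^{-\re w}|w|^\lambda$. The hypothesis on $w$ translates into $\beta_1\le\beta\le\alpha_1|w|$ (the lower bound from being outside $\cH(\lambda,\beta_1/|c_j|,\nu)$, the upper bound from being inside $\cH(\lambda-1,\alpha_1/|c_j|,\nu)$), and yields the identity $\re w=\lambda\log|w|+\log(|c_j|/\beta)$ since $w\in\Gamma(\lambda,\beta/|c_j|)$. Since the quantity $\theta=\im w - y_n^j$ from Lemma \ref{lemma_map_mitte} satisfies $|\theta|\le\pi/4$, applying that lemma with small $\epsi>0$ yields $\re h_j(w)\ge \re w+K\beta$ with $K:=(1-\epsi)\cos(\pi/4+\epsi)>0$, together with $|h_j(w)-w|\le(1+\epsi)\beta$.

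I would first verify the real-part condition $\re h_j(w)\ge\lambda\log|h_j(w)|+\log c^*$. Substituting the identities above and using, for $\lambda\ge 0$, the estimate $\lambda\log|h_j(w)|\le\lambda\log|w|+\lambda\log(1+(1+\epsi)\alpha_1)$, this reduces to
$$K\beta-\log\beta\ge\log(c^*/|c_j|)+\lambda\log(1+(1+\epsi)\alpha_1).$$
For $\alpha_1$ small the right-hand side is bounded, while the left-hand side is eventually increasing in $\beta$ and tends to infinity; choosing $\beta_1$ sufficiently large (depending on $c^*$ and on $\min_j|c_j|$) makes the inequality hold uniformly in $j$ and in $\beta\ge\beta_1$. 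The case $\lambda<0$ is symmetric, using $|h_j(w)|\ge|w|(1-(1+\epsi)\alpha_1)>0$ to bound $\lambda\log|h_j(w)|$ from above. For the imaginary-part condition $|\im h_j(w)|\ge\nu$, the bound $|\re w|=O(\log|w|)$ on our region forces $|w|=|\im w|(1+o(1))$, so $|h_j(w)-w|\le(1+\epsi)\alpha_1|\im w|(1+o(1))$; by choosing $\alpha_1$ small and selecting $\nu$ away from the countable set of values where some $|y_n^j|$ lies in $(\nu-\pi/4,\nu+\pi/4)$, the constraint $|\im w-y_n^j|\le\pi/4$ combined with $|\im w|\ge\nu$ forces $|\im w|\ge\nu$ with a uniform positive gap, which survives the small displacement imposed by Lemma \ref{lemma_map_mitte}.

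The hard part will be the real-part inequality: it balances three quantities with quite different growth rates in $\beta$ (the linear gain $K\beta$ from the displacement, the logarithmic loss $\log\beta$ coming from $\re w$ on $\Gamma(\lambda,\beta/|c_j|)$, and the $O(\alpha_1)$ correction from comparing $\log|h_j(w)|$ to $\log|w|$), and it must hold uniformly in $j\in\{1,\ldots,d\}$, in $n\in\dZ$, and across the full range $\beta\in[\beta_1,\alpha_1|w|]$. The secondary task is to coordinate the three free parameters in the right order: fix a small $\epsi>0$ first, then choose $\alpha_1$ small enough to make the $\log(1+(1+\epsi)\alpha_1)$ correction negligible, then $\beta_1$ large enough to absorb $\log(c^*/|c_j|)$, and finally $\nu$ large enough for the asymptotic expansion underlying Lemma \ref{lemma_map_mitte} to be valid and for the boundary $|\im w|=\nu$ to be separated from all $|y_n^j|$.
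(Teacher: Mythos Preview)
Your approach coincides with the paper's: write $w\in\Gamma(\lambda,\beta/|c_j|)$ with $\beta_1\le\beta\le\alpha_1|w|$, invoke Lemma~\ref{lemma_map_mitte} with $|\theta|\le\pi/4$, and for the real part balance the linear gain $K\beta$ against $\log\beta$. The paper avoids your case split on $\sgn\lambda$ by first deducing $\tfrac12|w|\le|h_j(w)|\le 2|w|$ from $|h_j(w)-w|\le(1+\epsi)\alpha_1|w|$, which replaces your correction term $\lambda\log(1+(1+\epsi)\alpha_1)$ by a uniform $|\lambda|\log 2$; otherwise the real-part argument is the same.

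The imaginary-part step needs repair. You already have $|h_j(w)-w|\le(1+\epsi)\alpha_1|\im w|(1+o(1))$, and that alone gives $|\im h_j(w)|\ge(1-O(\alpha_1))|\im w|$, which is exactly the paper's argument. Your extra device of choosing $\nu$ to avoid the $|y_n^j|$ produces only an \emph{additive} gap of size $O(1)$, while the displacement you must absorb is of order $\alpha_1|\im w|$ and hence unbounded; a constant gap cannot ``survive'' it, and with your declared order of choices ($\alpha_1$ before $\nu$) you cannot force $\alpha_1\nu$ to be small either. Drop that detour and keep the multiplicative estimate: it gives $|\im h_j(w)|>\nu$ once $|w|$ is large, and in the lemma's only application (via Lemma~\ref{lemma_mapQ_mitte}) one has $|n|\ge n_0$, which forces $|\im w|$ to be as large as needed independently of $\nu$.
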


\begin{proof}
We have $w\in\Gamma(\lambda,\beta/|c_j|)$ for some $\beta\ge\beta_1$. Since also $w\in\cH(\lambda-1,\alpha_1/|c_j|,\nu)$, we have
$$\lambda\log|w|-\log\beta=\re w\ge(\lambda-1)\log|w|-\log\alpha_1$$
and hence
\begin{equation}  \label{eq_beta}
\beta\le\alpha_1|w|.
\end{equation}
Let $\epsi\in(0,\pi/4)$, $\theta:=\im w-y_n^j$, and suppose that  $\alpha_1,\beta_1,\nu$ are chosen such that the conclusion of Lemma \ref{lemma_map_mitte} holds. Then
$$|\im h_j(w)-\im w|\le(1+\epsi)\beta\sin(|\theta|+\epsi)\le2\beta.$$
By \eqref{eq_beta} and since $|\im w|=(1+o(1))|w|$ for $w$ in the closure of $\cH(\lambda-1,\alpha_1/|c_j|,\nu)\setminus\cH(\lambda,\beta_1/|c_j|,\nu)$, this yields
\begin{equation}
|\im h_j(w)|\ge|\im w|-2\beta=(1+o(1))|w|-2\beta\ge(1+o(1))|w|-2\alpha_1|w|>\nu,
\end{equation}
provided $|w|$ is sufficiently large and $\alpha_1$ is sufficiently small.

Also, by Lemma \ref{lemma_map_mitte} and \eqref{eq_beta},
\begin{equation}  
|h_j(w)-w|\le(1+\epsi)\beta\le(1+\epsi)\alpha_1|w|.
\end{equation}
If $\alpha_1$ is sufficiently small, we obtain
$$\frac{1}{2}|w|\le|h_j(w)|\le2|w|$$
and hence
\begin{equation}   \label{eq_modh-w}
\frac{1}{2}|h_j(w)|\le|w|\le2|h_j(w)|.
\end{equation}
By Lemma \ref{lemma_map_mitte} and \eqref{eq_modh-w},
\begin{equation}
\begin{split}
&\re h_j(w)\ge\re w+(1-\epsi)\beta\cos\left(\frac{\pi}{4}+\epsi\right)\\
&=\lambda\log|w|-\log\beta+(1-\epsi)\beta\cos\left(\frac{\pi}{4}+\epsi\right)\\
&\ge\lambda\log|h_j(w)|-|\lambda|\log2-\log\beta+(1-\epsi)\beta\cos\left(\frac{\pi}{4}+\epsi\right)\\
&\ge\lambda\log|h_j(w)|-\log\frac{1}{c^*}
\end{split}
\end{equation}
if $\beta_1$ and hence $\beta$ is sufficiently large. Thus, $h_j(w)\in\cH(\lambda,1/c^*, \nu).$
\end{proof}

Let us now define several sets. We start with subsets $\cQ_{n,k}^j,\tilde\cQ_{n,k}^j\subset\dC\setminus\cH(\lambda,\beta_1/|c_j|,\nu)$ for $j\in\{1,...,d\}$, $k\in\dN$ and $n\in\dZ$. 

Let $0<\theta_1<1/(6\pi)\arccos(5/6)$. For $j\in\{1,...,d\}$, $k\in\dN$ and $n\in\dZ$, let $\cQ_{n,k}^j$ be the set of all 
$$w\in \cH\left(\lambda, \frac{2^{k+1}\beta_1}{|c_j|},\nu\right)\setminus \cH\left(\lambda,\frac{2^k\beta_1}{|c_j|},\nu\right)$$
such that
$$|\im w-y_{n}^j|\le\theta_1.$$
Also, let $\tilde \cQ_{n,k}^j$ be the set of all 
$$w\in \cH\left(\lambda,\frac{2^{k+2}\beta_1}{|c_j|},\nu\right)\setminus \cH\left(\lambda,\frac{2^{k-1}\beta_1}{|c_j|},\nu\right)$$
such that 
$$|\im w-y_{n}^j|\le5\pi\theta_1.$$

See Figure \ref{fig_Q} for an illustration of these sets. Note that $\cQ_{n,k}^j\subset\tilde\cQ_{n,k}^j$. If $\tilde \cQ_{n,k}^j\subset\cH(\lambda-1,\alpha_1/|c_j|,\nu)$, then by Lemma \ref{lemma_h(Q)inH}, we have $h_j(\tilde Q_{n,k}^j)\subset\cH(\lambda,1/c^*,\nu)$.

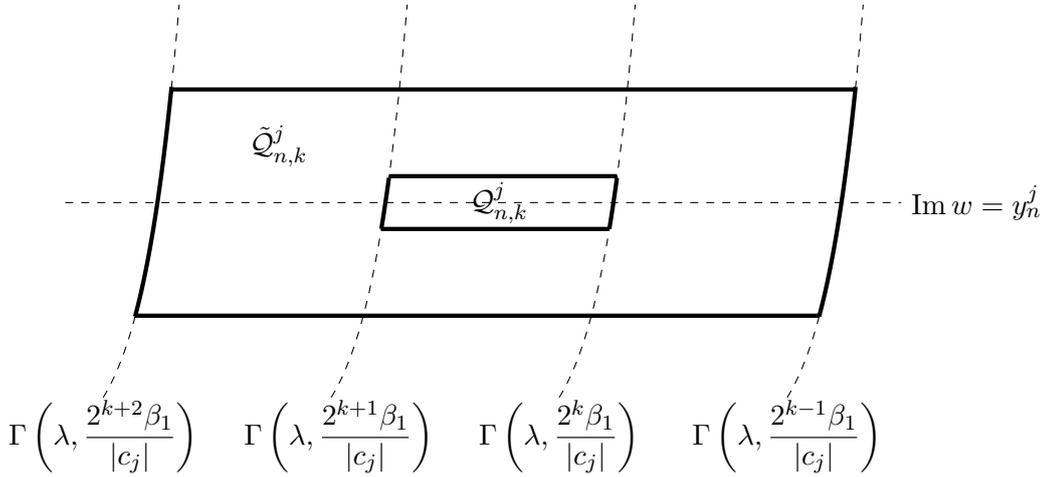
\begin{figure}[ht]
\centering
\begin{tikzpicture}
\draw[dashed, smooth, samples = 20, domain = 0.5:1.5] plot(\x, {0.3*exp(2*\x)});
\draw[dashed, smooth, samples = 20, domain = 3.5:4.5] plot(\x, {0.3*exp(2*(\x-3))});
\draw[dashed, smooth, samples = 20, domain = 6.5:7.5] plot(\x, {0.3*exp(2*(\x-6))});
\draw[dashed, smooth, samples = 20, domain = 9.5:10.5] plot(\x, {0.3*exp(2*(\x-9))});

\draw (0.5,0.92) node[below]{$\Gamma\left(\lambda,\dfrac{2^{k+2}\beta_1}{|c_j|}\right)$};
\draw (3.6, 0.92) node[below]{$\Gamma\left(\lambda,\dfrac{2^{k+1}\beta_1}{|c_j|}\right)$};
\draw (6.5, 0.92) node[below]{$\Gamma\left(\lambda,\dfrac{2^{k}\beta_1}{|c_j|}\right)$};
\draw (9.5, 0.92) node[below]{$\Gamma\left(\lambda, \dfrac{2^{k-1}\beta_1}{|c_j|}\right)$};

\draw[dashed](0, 3.4) -- (11,3.4);
\draw (11, 3.4) node[right]{$\im w=y_n^j$};

\draw[ultra thick, smooth, samples = 10, domain = 4.16:4.26] plot(\x, {0.3*exp(2*(\x-3))});
\draw[ultra thick, smooth, samples = 10, domain = 7.16:7.26] plot(\x, {0.3*exp(2*(\x-6))});
\draw[ultra thick] (4.16, 3.05) -- (7.16, 3.05);
\draw[ultra thick] (4.26, 3.75) -- (7.26, 3.75);

\draw[ultra thick, smooth, samples = 10, domain = 0.92:1.4] plot(\x, {0.3*exp(2*\x)});
\draw[ultra thick, smooth, samples = 10, domain = 9.92:10.4] plot(\x, {0.3*exp(2*(\x-9))});
\draw[ultra thick] (0.92, 1.9) -- (9.92, 1.9);
\draw[ultra thick] (1.4, 4.9) -- (10.4, 4.9);

\draw (2.81, 4.15) node{$\tilde \cQ_{n,k}^j$};
\draw (5.71, 3.4) node{$\cQ_{n,k}^j$};

\end{tikzpicture}
\caption{An illustration of the sets $Q_{n,k}^j$ and $\tilde Q_{n,k}^j$.}
\label{fig_Q}
\end{figure}

 Moreover, let $\cR_{n,k}^j$ be the rectangle containing all $v\in\dC$ satisfying
$$\frac{3}{4}2^k\beta_1<\re v-\lambda\log|n|<\frac{5}{2}2^{k}\beta_1$$
and 
$$|\im v-y_{n}^j|<3\cdot2^{k}\beta_1\theta_1.$$
Also, let $\tilde \cR_{n,k}^j$ be the rectangle containing all $v\in\dC$ satisfying
$$\frac{5}{8}2^{k}\beta_1<\re v-\lambda\log|n|<3\cdot2^{k}\beta_1$$
and
$$|\im v-y_{n}^j|<4\cdot2^{k}\beta_1\theta_1.$$
Note that $\cR_{n,k}^j\subset\tilde\cR_{n,k}^j$.

\begin{lemma}   \label{lemma_mapQ_mitte}
There are $\alpha_1, \beta_1, \nu, n_0>0$ such that the following holds. If $j\in\{1,...,d\}$, $n\in\dZ$ with $|n|\ge n_0$ and $k\in\dN$ are such that 
$\tilde \cQ_{n,k}^j\subset \cH(\lambda-1,\alpha_1/|c_j|,\nu),$
then 
$h_j(\cQ_{n,k}^j)\subset \cR_{n,k}^j$
and
$h_j(\tilde \cQ_{n,k}^j)\supset\tilde \cR_{n,k}^j.$
\end{lemma}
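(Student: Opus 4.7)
The plan is to apply Lemma \ref{lemma_map_mitte} throughout, choosing $\epsi>0$ small relative to $\theta_1$ and then taking $\beta_1$ and $n_0$ large enough that all additive error terms (of order $O(k+\log\beta_1)$ or $o(1)$) are dominated by the principal terms of order $2^k\beta_1$ uniformly in $k\ge 1$. The common geometric input is that for $w$ in the closure of $\tilde\cQ_{n,k}^j$ with $|n|\ge n_0$ one has $|w|=|n|(2\pi+o(1))$, so that lying on a curve $\Gamma(\lambda,\beta/|c_j|)$ translates into $\re w-\lambda\log|n|=\lambda\log(2\pi)+\log|c_j|-\log\beta+o(1)$.

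For the first inclusion $h_j(\cQ_{n,k}^j)\subset\cR_{n,k}^j$, I would fix $w\in\cQ_{n,k}^j\cap\Gamma(\lambda,\beta/|c_j|)$ with $\beta\in[2^k\beta_1,2^{k+1}\beta_1]$ and $|\im w-y_n^j|\le\theta_1$, and combine the estimates $(1-\epsi)\beta\cos(\theta_1+\epsi)\le\re(h_j(w)-w)\le(1+\epsi)\beta$ and $|\im(h_j(w)-w)|\le(1+\epsi)\beta\sin(\theta_1+\epsi)$ from Lemma \ref{lemma_map_mitte} with the formula for $\re w$ above. The resulting values of $\re h_j(w)-\lambda\log|n|$ and $|\im h_j(w)-y_n^j|$ then fall into $(\tfrac{3}{4}\cdot 2^k\beta_1,\tfrac{5}{2}\cdot 2^k\beta_1)$ and $(0,\,3\cdot 2^k\beta_1\theta_1)$ respectively, provided $(1-\epsi)\cos(\theta_1+\epsi)>3/4$, $2(1+\epsi)<5/2$, and $2(1+\epsi)(\theta_1+\epsi)<3\theta_1$, all of which hold for $\epsi$ sufficiently small compared to $\theta_1$.

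For the second inclusion $h_j(\tilde\cQ_{n,k}^j)\supset\tilde\cR_{n,k}^j$ I would argue by the argument principle: for each $v\in\tilde\cR_{n,k}^j$, I want to show that $h_j(\partial\tilde\cQ_{n,k}^j)$ is disjoint from $\overline{\tilde\cR_{n,k}^j}$ and winds once counterclockwise about $v$. On the left arc $\Gamma(\lambda,2^{k+2}\beta_1/|c_j|)\cap\{|\im w-y_n^j|\le 5\pi\theta_1\}$, the condition $\theta_1<(6\pi)^{-1}\arccos(5/6)$ ensures $\cos(5\pi\theta_1)>5/6$, and Lemma \ref{lemma_map_mitte} then yields $\re h_j(w)-\lambda\log|n|>3\cdot 2^k\beta_1$; on the right arc $\Gamma(\lambda,2^{k-1}\beta_1/|c_j|)\cap\{|\im w-y_n^j|\le 5\pi\theta_1\}$ one obtains $\re h_j(w)-\lambda\log|n|<\tfrac{5}{8}\cdot 2^k\beta_1$. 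On the top edge $\{\im w=y_n^j+5\pi\theta_1\}$, the sign $\arg(h_j(w)-w)\approx-\theta<0$ extracted from Lemma \ref{lemma_map_mitte} forces $\im(h_j(w)-w)<0$ with $|\im(h_j(w)-w)|\ge(1-\epsi)\cdot 2^{k-1}\beta_1\sin(5\pi\theta_1-\epsi)$; using Jordan's inequality $\sin(5\pi\theta_1)\ge 10\theta_1$, this exceeds $4\cdot 2^k\beta_1\theta_1+5\pi\theta_1$ for $\beta_1$ large, giving $\im h_j(w)<y_n^j-4\cdot 2^k\beta_1\theta_1$; the bottom edge is symmetric.

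Thus each side of $\partial\tilde\cQ_{n,k}^j$ maps entirely outside $\overline{\tilde\cR_{n,k}^j}$, and tracing the image as $w$ traverses $\partial\tilde\cQ_{n,k}^j$ counterclockwise starting from the bottom-left corner shows that $h_j(w)$ visits, in turn, the regions above, to the left of, below, and to the right of $\tilde\cR_{n,k}^j$. The winding number about any $v\in\tilde\cR_{n,k}^j$ is therefore $+1$, and the argument principle gives $h_j(w)=v$ for some $w\in\tilde\cQ_{n,k}^j$. The main obstacle will be organising the constants in the right order (first $\theta_1$, then $\epsi$, then $\beta_1$, then $n_0$) and keeping careful track of the signs in the asymptotic expression $h_j(w)-w\approx-c_je^{-w}\vi_j(w)^{d\lambda}$, so as to confirm that the traversal orientation really yields winding number $+1$ rather than $-1$ or $0$.
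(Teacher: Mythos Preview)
Your proposal is essentially correct and, for the first inclusion $h_j(\cQ_{n,k}^j)\subset\cR_{n,k}^j$ as well as for the boundary estimates on $\partial\tilde\cQ_{n,k}^j$, it follows the paper's proof closely: the same application of Lemma~\ref{lemma_map_mitte}, the same use of $|w|\asymp |n|$ to convert $\lambda\log|w|$ into $\lambda\log|n|$, and the same numerical inequalities (with the constraint $\theta_1<(6\pi)^{-1}\arccos(5/6)$ doing exactly the work you identify).

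The one genuine methodological difference is in how you close the second inclusion $h_j(\tilde\cQ_{n,k}^j)\supset\tilde\cR_{n,k}^j$. You track the \emph{signs} of $\im(h_j(w)-w)$ on the horizontal edges (by going back into the proof of Lemma~\ref{lemma_map_mitte} to recover $\arg(h_j(w)-w)\approx-\theta$, since the lemma as stated only controls $|\im(h_j(w)-w)|$), and then invoke the argument principle via an explicit winding-number computation. The paper avoids this entirely: once one knows $h_j(\partial\tilde\cQ_{n,k}^j)\cap\tilde\cR_{n,k}^j=\emptyset$, the already-proved inclusion $h_j(\cQ_{n,k}^j)\subset\cR_{n,k}^j\subset\tilde\cR_{n,k}^j$ shows the image of the interior meets $\tilde\cR_{n,k}^j$; since $h_j$ is open and $\tilde\cR_{n,k}^j$ is connected, this forces $\tilde\cR_{n,k}^j\subset h_j(\tilde\cQ_{n,k}^j)$. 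This topological shortcut is cleaner --- no orientation bookkeeping, no need to extract sign information beyond what Lemma~\ref{lemma_map_mitte} literally states --- while your route has the virtue of giving slightly more (it shows each $v\in\tilde\cR_{n,k}^j$ has exactly one preimage, which the paper later obtains anyway from the absence of critical points). Your worry in the final paragraph about confirming the winding number is $+1$ rather than $-1$ is legitimate but resolvable, and would simply disappear if you adopted the paper's open-mapping argument instead.
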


\begin{proof}
For $w\in\cH(\lambda-1,\alpha_1/|c_j|,\nu)\setminus\cH(\lambda,\beta_1/|c_j|,\nu)$, we have $\re w=o(|w|)$ and hence $|w|=(1+o(1))|\im w|$ as $w\to\infty$. If $|\im w-y_n^j|\le5\pi\theta_1$ and $|n|$ is sufficiently large, we obtain
\begin{equation}  \label{eq_modw}
|n|\le|w|\le e^2|n|.
\end{equation}
For $w\in\tilde \cQ_{n,k}^j$, this implies that
\begin{equation}   \label{eq_lowerrealestimate}
\re w\ge\lambda\log|w|-\log\frac{2^{k+2}\beta_1}{|c_j|}\ge\lambda\log|n|-2|\lambda|-\log\frac{2^{k+2}\beta_1}{|c_j|}
\end{equation}
and
\begin{equation}  \label{eq_upperrealestimate}
\re w\le\lambda\log|w|-\log\frac{2^{k-1}\beta_1}{|c_j|}\le\lambda\log|n|+2|\lambda|-\log\frac{2^{k-1}\beta_1}{|c_j|}.
\end{equation}
Let $\epsi>0$ be small and let $w\in \cQ_{n,k}^j\subset\tilde \cQ_{n,k}^j$. By Lemma \ref{lemma_map_mitte} and \eqref{eq_lowerrealestimate},
and since $0<\theta_1<1/(6\pi)\arccos(5/6)<(1/2)\arccos(5/6)$, we have
\begin{equation}
\begin{split}
\re h_j(w)&\ge\re w+(1-\epsi)2^k\beta_1\cos(\theta_1+\epsi)\\
&\ge\re w+(1-\epsi)2^k\beta_1\cos(2\theta_1)\\
&>\lambda\log|n|-2|\lambda|-\log\frac{2^{k+2}\beta_1}{|c_j|}+(1-\epsi)2^k\beta_1\cdot\frac{5}{6}\\
&>\lambda\log|n|+\frac{3}{4}2^k\beta_1
\end{split}
\end{equation}
if $\epsi$ is sufficiently small and $\beta_1$ is sufficiently large. Analogously,
\begin{equation}
\begin{split}
\re h_j(w)&\le\re w+(1+\epsi) 2^{k+1}\beta_1\\
&\le\lambda\log|n|+2|\lambda|-\log\frac{2^{k-1}\beta_1}{|c_j|}+(1+\epsi)2^{k+1}\beta_1\\
&<\lambda\log|n|+\frac{5}{2}2^{k}\beta_1
\end{split}
\end{equation}
if $\epsi$ is sufficiently small and $\beta_1$ is sufficiently large. Moreover, by Lemma \ref{lemma_map_mitte},
\begin{equation}
\begin{split}
|\im h_j(w)-y_{n}^j|&\le|\im h_j(w)-\im w|+|\im w-y_n^j|\\
&\le(1+\epsi)2^{k+1}\beta_1\sin(\theta_1+\epsi)+|\im w-y_{n}^j|\\
&\le(1+\epsi)2^{k+1}\beta_1(\theta_1+\epsi)+\theta_1\\
&<3\cdot2^{k}\beta_1\theta_1
\end{split}
\end{equation}
if $\epsi$ is sufficiently small and $\beta_1$ is sufficiently large. Thus, $h_j(\cQ_{n,k}^j)\subset \cR_{n,k}^j$.

In the following, we show that $h_j(\partial\tilde \cQ_{n,k}^j)\cap \tilde \cR_{n,k}^j=\emptyset$. Since we have already shown that $h_j(\tilde \cQ_{n,k}^j)\cap\tilde \cR_{n,k}^j\ne\emptyset$, this implies that $h_j(\tilde \cQ_{n,k}^j)\supset \tilde \cR_{n,k}^j.$

If $w\in\Gamma(\lambda,2^{k-1}\beta_1/|c_j|)$ and $\beta_1$ is large, then by Lemma \ref{lemma_map_mitte} and \eqref{eq_upperrealestimate},
\begin{equation}
\begin{split}
\re h_j(w)&\le\re w+(1+\epsi)2^{k-1}\beta_1\\
&\le\lambda\log|n|+2|\lambda|-\log\frac{2^{k-1}\beta_1}{|c_j|}+(1+\epsi)2^{k-1}\beta_1\\
&<\lambda\log|n|+\frac{5}{8}2^{k}\beta_1.
\end{split}
\end{equation}
If $w\in\Gamma(\lambda,2^{k+2}\beta_1/|c_j|)$ and $|\im w-y_{n}^j|\le5\pi\theta_1$, then by Lemma \ref{lemma_map_mitte} and \eqref{eq_lowerrealestimate}, and since $0<\theta_1<1/(6\pi)\arccos(5/6)$, we have
\begin{equation}
\begin{split}
\re h_j(w)&\ge \re w+(1-\epsi)2^{k+2}\beta_1\cos(5\pi\theta_1+\epsi)\\
&\ge\lambda\log|n|-2|\lambda|-\log\frac{2^{k+2}\beta_1}{|c_j|}+(1-\epsi)2^{k+2}\beta_1\cos(6\pi\theta_1)\\
&>\lambda\log|n|-2|\lambda|-\log\frac{2^{k+2}\beta_1}{|c_j|}+(1-\epsi)2^{k+2}\beta_1\cdot\frac{5}{6}\\
&>\lambda\log|n|+3\cdot2^{k}\beta_1,
\end{split}
\end{equation}
provided $\epsi$ is sufficiently small and $\beta_1$ is sufficiently large.

If $|\im w-y_{n}^j|=5\pi\theta_1$ and $w\in \cH(\lambda,2^{k+2}\beta_1/|c_j|,\nu)\setminus \cH(\lambda,2^{k-1}\beta_1/|c_j|,\nu)$, then by Lemma \ref{lemma_map_mitte},
\begin{equation}
\begin{split}
|\im h_j(w)-y_{n}^j|&\ge|\im h_j(w)-\im(w)|-|\im w-y_{n}^j|\\
&\ge(1-\epsi)2^{k-1}\beta_1\sin(5\pi\theta_1-\epsi)-5\pi\theta_1\\
&\ge(1-\epsi)2^{k-1}\beta_1\frac{2}{\pi}(5\pi\theta_1-\epsi)-5\pi\theta_1\\
&>4\cdot2^{k}\beta_1\theta_1,
\end{split}
\end{equation}
provivded $\epsi$ is sufficiently small and $\beta_1$ is sufficiently large.
Thus, $h_j(\partial\tilde \cQ_{n,k}^j)\subset\dC\setminus\tilde \cR_{n,k}^j$. 
\end{proof}

Next, we prove that the density of $q(\cF_j)$ in $\tilde Q_{n,k}^j$ is bounded below by a positive constant.

\begin{lemma}   \label{lemma_FinQ_mitte}
There are $\alpha_1,\beta_1,\nu, \delta, n_1>0$ such that for all $j\in\{1,...,d\}$, $n\in\dZ$ with $|n|\ge n_1$ and $k\in\dN$ with $\tilde\cQ_{n,k}^j\subset \cH(\lambda-1,\alpha_1/|c_j|,\nu)$, we have
$$\dens(q(\cF_j), \tilde \cQ_{n,k}^j)\ge\delta.$$
\end{lemma}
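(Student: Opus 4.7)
The strategy is to pull back the positive density of $q(\cF_j)$ on large rectangles in $\cH(\lambda,1/c^*,\nu)$ supplied by Lemma~\ref{lemma_F_rechts} through the map $h_j$, using the mapping behaviour $h_j(\cQ_{n,k}^j)\subset\cR_{n,k}^j$ and $h_j(\tilde\cQ_{n,k}^j)\supset\tilde\cR_{n,k}^j$ from Lemma~\ref{lemma_mapQ_mitte}. The first preparatory step will be to show that, provided $\alpha_1$ is small enough depending only on $d$, we have $f(\vi_j(w))\in\cS_j$ for every $w\in\tilde\cQ_{n,k}^j$: Corollary~\ref{cor_asymptotics_fz} gives $|f(z)-z|=O(2^{k}\beta_1/|z|^{d-1})$ for $z=\vi_j(w)$, while $\arg w\approx\pm\pi/2$ forces $\arg\vi_j(w)$ near the centre of $\cS_j$ so that $\dist(z,\partial\cS_j)\ge|z|\sin(\pi/(2d))\ge|z|/d$. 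As $\tilde\cQ_{n,k}^j\subset\cH(\lambda-1,\alpha_1/|c_j|,\nu)$ forces $|w|\ge 2^{k-1}\beta_1/\alpha_1$ and $|z|^d\sim|w|$, choosing $\alpha_1<1/(16d)$ yields the claim. Complete invariance of $\cF(f)$ together with injectivity of $q$ on $\cS_j$ then gives $f(\vi_j(w))=\vi_j(h_j(w))$ and hence the pivotal equivalence
\[
w\in q(\cF_j)\ \Longleftrightarrow\ h_j(w)\in q(\cF_j), \qquad w\in\tilde\cQ_{n,k}^j.
\]

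Next I would let $V$ denote the connected component of $h_j^{-1}(\tilde\cR_{n,k}^j)\cap\tilde\cQ_{n,k}^j$ that contains $\cQ_{n,k}^j$. Since the proof of Lemma~\ref{lemma_mapQ_mitte} shows $h_j(\partial\tilde\cQ_{n,k}^j)\cap\tilde\cR_{n,k}^j=\emptyset$, the set $V$ lies in the interior of $\tilde\cQ_{n,k}^j$ and $h_j|_V\colon V\to\tilde\cR_{n,k}^j$ is a proper holomorphic map. Differentiating the expansion from Lemma~\ref{lemma_asymptotics_mitte} via Cauchy's inequality yields $h_j'(w)=1+c_je^{-w}\vi_j(w)^{d\lambda}(1+o(1)+O(\epsi))$ uniformly on $\tilde\cQ_{n,k}^j$; for $\beta_1$ sufficiently large $|h_j'(w)|$ is therefore comparable to $|c_je^{-w}\vi_j(w)^{d\lambda}|\in[2^{k-2}\beta_1,\,2^{k+3}\beta_1]$ and in particular never vanishes. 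Hence $h_j|_V$ is a proper unramified cover of the simply connected rectangle $\tilde\cR_{n,k}^j$, so it must be a conformal bijection, and the distortion ratio $A:=\sup_V|h_j'|/\inf_V|h_j'|$ is bounded above by an absolute constant.

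By Lemma~\ref{lemma_h(Q)inH}, $\tilde\cR_{n,k}^j\subset\cH(\lambda,1/c^*,\nu)\subset\cH(\lambda,1/|c_j|,\nu)$, and its side lengths $(19/8)\cdot 2^k\beta_1$ and $8\cdot 2^k\beta_1\theta_1$ both exceed $D_0$ once $\beta_1$ is large, so Lemma~\ref{lemma_F_rechts} delivers $\meas(q(\cF_j)\cap\tilde\cR_{n,k}^j)\ge\eta_0\meas(\tilde\cR_{n,k}^j)$. Two applications of the area change-of-variables formula for the biholomorphism $h_j|_V$, combined with the bounds on $|h_j'|$ and the equivalence above, then give $\meas(q(\cF_j)\cap V)\ge(\eta_0/A^2)\meas(V)$. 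By Lemma~\ref{lemma_gamma}(iv), the horizontal widths of $\cQ_{n,k}^j$ and of $\tilde\cQ_{n,k}^j$ are uniformly bounded above and below, so $\meas(\cQ_{n,k}^j)/\meas(\tilde\cQ_{n,k}^j)\ge c_2$ for an absolute $c_2>0$; since $\cQ_{n,k}^j\subset V$, we obtain $\dens(q(\cF_j),\tilde\cQ_{n,k}^j)\ge\eta_0 c_2/A^2=:\delta$. The main obstacle I foresee lies in establishing the non-vanishing of $h_j'$ on the whole of $V$ and the consequent identification of $h_j|_V$ with a conformal bijection onto the simply connected target $\tilde\cR_{n,k}^j$, since the derivative expansion must be maintained throughout the ``middle'' regime that is not covered by Lemma~\ref{lemma_asymptotics_derivative_rechts}.
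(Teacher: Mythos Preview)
Your proposal is correct and follows essentially the same route as the paper: take the component $V\subset\tilde\cQ_{n,k}^j$ of $h_j^{-1}(\tilde\cR_{n,k}^j)$ containing $\cQ_{n,k}^j$, show $h_j|_V$ is a conformal bijection onto $\tilde\cR_{n,k}^j$ with bounded distortion, transfer the density bound of Lemma~\ref{lemma_F_rechts} on the target rectangle back to $V$, and finish with $\meas(\cQ_{n,k}^j)/\meas(\tilde\cQ_{n,k}^j)\ge c_2$ from Lemma~\ref{lemma_gamma}(iv).

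Two technical points are handled differently. First, you work to prove $f(\vi_j(\tilde\cQ_{n,k}^j))\subset\cS_j$ (so $l=j$); the paper is content with $f(\vi_j(\tilde\cQ_{n,k}^j))\subset\cS_l$ for \emph{some} $l$, obtained directly from Lemma~\ref{lemma_h(Q)inH} (which gives $h_j(\tilde\cQ_{n,k}^j)\subset q(\cS_l)$ for every $l$) together with connectedness. Either works, and Lemma~\ref{lemma_F_rechts} is uniform in the sector index. Second, and more to the point of your stated ``main obstacle'': the paper dispenses with the derivative expansion entirely. Since $h_j=q\circ f\circ\vi_j$ and $\vi_j$ is univalent, the critical points of $h_j$ in $\tilde\cQ_{n,k}^j$ could only arise from critical points of $f$ at $\vi_j(w)$ (zeros of $g$ or of $g''$) or from critical points of $q$ at $f(\vi_j(w))$. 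By Lemma~\ref{lemma_zeros} the zeros of $g$ have $q$-images near $\Gamma(\lambda,1/|c_j|)$, hence outside $\tilde\cQ_{n,k}^j$ for $\beta_1$ large; the zeros of $g''$ and the critical points of $q$ are finite in number and thus avoided for $\nu$ large. This gives $h_j'\ne0$ on $\tilde\cQ_{n,k}^j$ immediately, so the proper map $h_j|_V\colon V\to\tilde\cR_{n,k}^j$ is unramified and hence conformal. The distortion is then obtained from Koebe applied to the inverse $\psi$ on the inner rectangle $\cR_{n,k}^j\subset\tilde\cR_{n,k}^j$, avoiding any explicit bounds on $|h_j'|$.
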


\begin{proof}
By Section \ref{sec_singularities}, in particular Lemma \ref{lemma_zeros}, the function $h_j=q\circ f\circ\vi_j$ has no critical points in $\tilde \cQ_{n,k}^j$ if $\nu$ and $\beta_1$ are sufficiently large. By Lemma \ref{lemma_mapQ_mitte}, $h_j(\tilde \cQ_{n,k}^j)\supset\tilde \cR_{n,k}^j$ and $h_j(\cQ_{n,k}^j)\subset \cR_{n,k}^j$. Let $\cU$ be the component of $h_j^{-1}(\tilde \cR_{n,k}^j)$ containing $\cQ_{n,k}^j$. Then $\cQ_{n,k}^j\subset \cU\subset\tilde \cQ_{n,k}^j$. Since $\tilde \cR_{n,k}^j$ is simply connected, $h_j$ maps $\cU$ conformally onto $\tilde \cR_{n,k}^j$. Let $\psi:\,\tilde \cR_{n,k}^j\to \cU$ be the corresponding inverse function. By Lemma \ref{lemma_h(Q)inH},
$$h_j(\tilde \cQ_{n,k}^j)\subset \cH\left(\lambda,\frac{1}{c^*},\nu\right)\subset\dC\setminus (\overline{\cD(0,R)}\cup [0,\infty))=q(\cS_l)$$
for all $l\in\{1,...,d\}$.
Hence, there exists $l\in\{1,...,d\}$ such that $(f\circ\vi_j)(\tilde \cQ_{n,k}^j)\subset \cS_l$. We have $\psi(q(\cF_l)\cap~\tilde \cR_{n,k}^j)=q(\cF_j)\cap \cU$.  By the Koebe distortion theorem,  $\psi$ has bounded distortion in $\cR_{n,k}^j$ independent of $n$, $k$ and $j$.  We obtain
\begin{equation}
\begin{split}
\dens(q(\cF_j), \tilde \cQ_{n,k}^j)&\ge\dens(q(\cF_j), \psi(\cR_{n,k}^j))\cdot\dens(\psi(\cR_{n,k}^j), \tilde \cQ_{n,k}^j)\\
&=\dens(\psi(q(\cF_l)\cap\cR_{n,k}^j), \psi(\cR_{n,k}^j))\cdot\dens(\psi(\cR_{n,k}^j), \tilde \cQ_{n,k}^j)\\
&\ge c\dens(q(\cF_l), \cR_{n,k}^j)\cdot\dens(\cQ_{n,k}^j, \tilde \cQ_{n,k}^j)
\end{split}
\end{equation}
for some $c>0$ independent of $n,k$ and $j$. If $\beta_1$ is sufficiently large, then by Lemma \ref{lemma_F_rechts}, 
\begin{equation}
\dens(q(\cF_l), \cR_{n,k}^j)\ge \eta_0.
\end{equation}
Moreover, by Lemma \ref{lemma_gamma}, 
$$\meas \cQ_{n,k}^j\ge\frac{2}{3}\log2\cdot2\theta_1\quad\text{ and } \meas\tilde \cQ_{n,k}^j\le2\log 8\cdot10\pi\theta_1.$$
Hence,
$$\dens(q(\cF_j), \tilde \cQ_{n,k}^j)\ge c\eta_0\frac{\log2}{15\pi\log8}=:\delta.\qedhere$$
\end{proof}

The last Lemma of this Section says that there is a positive lower bound for the density of $q(\cF_j)$ in any sufficiently large rectangle contained in $\cH(\lambda-1,\alpha_1/|c_j|,\nu)\setminus\cH(\lambda,\beta_1/|c_j|,\nu).$

\begin{lemma}  \label{lemma_F_mitte}
There are $\alpha_1, \beta_1, \nu, D_1,\eta_1>0$ such that for all $j\in\{1,...,d\}$ and any rectangle $\cR\subset \cH(\lambda-1,\alpha_1/|c_j|,\nu)\setminus \cH(\lambda,\beta_1/|c_j|,\nu)$ with sides parallel to the real and imaginary axis and side lengths at least $D_1$, we have
$$\dens(q(\cF_j), \cR)\ge\eta_1.$$
\end{lemma}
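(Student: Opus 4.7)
My plan is to deduce Lemma~\ref{lemma_F_mitte} from Lemma~\ref{lemma_FinQ_mitte} by exhibiting a pairwise disjoint subfamily of the sets $\tilde\cQ_{n,k}^j$ that tiles a fixed positive fraction of any sufficiently large axis-aligned rectangle in the region.

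First, I fix $\alpha_1,\beta_1,\nu,\delta,n_1$ as provided by Lemma~\ref{lemma_FinQ_mitte} and enlarge $\nu$ so that $|\im w|\ge\nu$ forces the relevant indices $n$ in the definition of $y_n^j$ to satisfy $|n|\ge n_1$. Since $\theta_1<1/(6\pi)\arccos(5/6)$, the horizontal strips $\{w:|\im w-y_n^j|\le 5\pi\theta_1\}$ for distinct $n\in\dZ$ are pairwise disjoint; and because the bands $\cH(\lambda,2^{k+2}\beta_1/|c_j|,\nu)\setminus \cH(\lambda,2^{k-1}\beta_1/|c_j|,\nu)$ for different $k$-values with $|k-k'|\ge 3$ have disjoint interiors, the family
\begin{equation*}
\cA := \bigl\{\tilde\cQ_{n,k}^j : n\in\dZ,\ k\ge 1,\ k\equiv 1\pmod 3\bigr\}
\end{equation*}
is pairwise disjoint. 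For every such $(n,k)$ the rightmost bounding curve $\Gamma(\lambda,2^{k-1}\beta_1/|c_j|)$ lies at or to the left of $\Gamma(\lambda,\beta_1/|c_j|)$ by Lemma~\ref{lemma_gamma}(iv) since $2^{k-1}\ge 1$, so every $\tilde\cQ\in\cA$ is disjoint from $\cH(\lambda,\beta_1/|c_j|,\nu)$; moreover, for fixed $n$ the subfamily $\{\tilde\cQ_{n,k}^j:k\ge 1,\,k\equiv 1\pmod 3\}$ tiles (up to boundary curves of measure zero) the horizontal strip $\{|\im w-y_n^j|\le 5\pi\theta_1\}\setminus \cH(\lambda,\beta_1/|c_j|,\nu)$.

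Second, given $\cR=[x_1,x_2]\times[y_1,y_2]$ with $x_2-x_1,y_2-y_1\ge D_1$, let $\cA_\cR:=\{\tilde\cQ\in\cA:\tilde\cQ\subset\cR\}$. The vertical count of eligible $n$'s with $y_n^j\in[y_1+5\pi\theta_1,\,y_2-5\pi\theta_1]$ is at least $(y_2-y_1)/(2\pi)-O(1)$. For the horizontal count at a given such $n$, observe that by Lemma~\ref{lemma_gamma}(iii) the curves $\Gamma(\lambda,\alpha)$ have slope $O(1/\nu)$ on the strip, and by Lemma~\ref{lemma_gamma}(iv) consecutive tiles in the subfamily have horizontal width in $[2\log 2,\,6\log 2]$. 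After absorbing the $O(1)$ boundary losses caused both by the slope of the $\Gamma$-curves and by tiles crossing the vertical sides of $\cR$, the number of valid $k$-values is at least $(x_2-x_1)/(3\log 2)-O(1)$. Each $\tilde\cQ\in\cA$ has measure at least $20\pi\theta_1\log 2$, and hence for $D_1$ sufficiently large
\begin{equation*}
\meas\biggl(\bigcup_{\tilde\cQ\in\cA_\cR}\tilde\cQ\biggr)\ge 4\theta_1\meas(\cR).
\end{equation*}

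Third, combining disjointness with Lemma~\ref{lemma_FinQ_mitte} gives
\begin{equation*}
\meas(q(\cF_j)\cap\cR)\ge\sum_{\tilde\cQ\in\cA_\cR}\meas(q(\cF_j)\cap\tilde\cQ)\ge\delta\sum_{\tilde\cQ\in\cA_\cR}\meas(\tilde\cQ)\ge 4\theta_1\delta\,\meas(\cR),
\end{equation*}
so the conclusion holds with $\eta_1:=4\theta_1\delta$. The main obstacle is purely geometric bookkeeping: the tiling by $\cA$ fills only the fraction $5\theta_1$ of the vertical direction, and an $O(1)$ strip is lost at each side of $\cR$. Both losses become negligible relative to $\meas(\cR)$ once $D_1$ is taken large compared to $\log(\beta_1/\alpha_1)$, $\pi$, and the slope bound from Lemma~\ref{lemma_gamma}(iii).
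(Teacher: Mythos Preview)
Your argument is correct, but it is considerably more elaborate than the paper's. The paper simply observes that if $D_1$ is chosen large enough (at least $5\log 8$ and large compared to $n_1$ and $\theta_1$), then any axis-aligned rectangle $\cR$ of side lengths in $[D_1,2D_1]$ contained in the region must contain \emph{at least one} set $\tilde\cQ_{n,k}^j$ with $|n|\ge n_1$, because the $y_n^j$ are $2\pi$-spaced and, by Lemma~\ref{lemma_gamma}(iv), the vertical bounding curves $\Gamma(\lambda,2^{k+2}\beta_1/|c_j|)$ and $\Gamma(\lambda,2^{k-1}\beta_1/|c_j|)$ are at horizontal distance at most $2\log 8$. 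A single application of Lemma~\ref{lemma_FinQ_mitte} then gives
\[
\dens(q(\cF_j),\cR)\ge \dens(q(\cF_j),\tilde\cQ_{n,k}^j)\cdot\dens(\tilde\cQ_{n,k}^j,\cR)\ge \delta\cdot\frac{(2/3)\log 8\cdot 10\pi\theta_1}{4D_1^2},
\]
and arbitrary rectangles are handled by subdividing into pieces with side lengths in $[D_1,2D_1]$.

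Your approach instead extracts a disjoint subfamily $\cA$ (via the congruence $k\equiv 1\pmod 3$), shows that it tiles each horizontal strip $\{|\im w-y_n^j|\le 5\pi\theta_1\}$ to the left of $\Gamma(\lambda,\beta_1/|c_j|)$, and then counts how many of these tiles fit entirely inside $\cR$. This is sound: the disjointness is genuine, the boundary losses are indeed $O(1)$ in each direction and are absorbed once $D_1$ is large, and the tiling identity lets you bound $\sum\meas(\tilde\cQ)$ from below by $\approx 5\theta_1\meas(\cR)$. The payoff of your route is a density bound $\eta_1$ that does not degrade with $D_1$ (it stays of order $\theta_1\delta$), whereas the paper's bound is of order $\delta/D_1^2$; on the other hand the paper avoids all the combinatorics by needing only a single $\tilde\cQ$ per rectangle and no disjointness at all. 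For the purposes of the paper either constant suffices.
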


\begin{proof}
Suppose that
$$D_1\ge5\log 8,$$
$$D_1\ge y_{n_1}^l+2\pi+10\pi\theta_1 \text{ and }D_1\ge |y_{-n_1}^l| +2\pi+10\pi\theta_1\text{ for all } l\in\{1,...,d\},$$
 with $n_1$ as in Lemma \ref{lemma_FinQ_mitte}. Let $\cR\subset \cH(\lambda-1,\alpha_1/|c_j|,\nu)\setminus \cH(\lambda,\beta_1/|c_j|,\nu)$ be a rectangle with sides parallel to the real and imaginary axis and side lengths at least $D_1$. By the definition of $\tilde \cQ_{n,k}^j$ and Lemma \ref{lemma_gamma}, there are $k\in\dN$ and $n\in\dZ$ with $|n|\ge n_1$ such that
$$\tilde \cQ_{n,k}^j\subset \cR.$$
If, in addition, the side lengths of $\cR$ do not exceed $2D_1$, then by Lemma \ref{lemma_FinQ_mitte} and Lemma \ref{lemma_gamma},
$$\dens(q(\cF_j),\cR)\ge\dens(q(\cF_j), \tilde \cQ_{n,k}^j)\cdot\dens(\tilde \cQ_{n,k}^j,\cR)\ge\delta\frac{2/3\log8\cdot10\pi\theta_1}{4D_1^2}.$$
Since any general rectangle with side lengths at least $D_1$ can be written as the union of rectangles with side lengths between $D_1$ and $2D_1$ which are disjoint up to the boundary, the claim follows.
\end{proof}


\section{The set \texorpdfstring{$q(\cF(f))$}{q(F(f))}: third part}   \label{sec_q(F)3}

For $\nu>0$, let
$$\cG_\nu:=\{w:\,|\im w|\ge\nu\}.$$
In this section, we investigate the density of $q(\cF(f))$ in subsets of $\cG_\nu\setminus\cH(\lambda-1,\beta_2/|c_j|,\nu)$ for large $\beta_2>0$. First, we give an approximation for $h_j$ in $\cG_\nu\setminus\cH(\lambda-1,\beta_2/|c_j|,\nu)$. 

\begin{lemma}   \label{lemma_asymptotics_links}
Let $\epsi>0$ and $j\in\{1,...,d\}$. Then there are $\beta_2,\nu>0$ such that for all $w\in\cG_\nu\setminus \cH(\lambda-1,\beta_2/|c_j|,\nu)$, we have
$$\left|\frac{h_j(w)}{(-c_j/d)^de^{-dw}w^{-m}}-1\right|<\epsi.$$
\end{lemma}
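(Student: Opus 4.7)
The plan is to use Corollary \ref{cor_asymptotics_f} and observe that in the region $\cG_\nu\setminus\cH(\lambda-1,\beta_2/|c_j|,\nu)$ the last summand $-c_je^{-w}/p(\vi_j(w))$ in the formula for $f(\vi_j(w))$ dominates all other summands, so that $|f(\vi_j(w))|\to\infty$. The global asymptotic $q(z)=z^d(1+O(1/z))$ will then convert the resulting approximation for $f(\vi_j(w))$ into the desired one for $h_j(w)=q(f(\vi_j(w)))$.

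First I would exploit the defining inequality of the region. By Remark \ref{remark_gamma}, any $w$ with $|\im w|\ge\nu$ outside $\cH(\lambda-1,\beta_2/|c_j|,\nu)$ satisfies
\begin{equation*}
|e^{-w}|>\frac{\beta_2}{|c_j|}|w|^{1-\lambda}=\frac{\beta_2}{|c_j|}|w|^{(m+1)/d}.
\end{equation*}
Combined with $p(\vi_j(w))=d\vi_j(w)^m(1+O(|w|^{-1/d}))$ and $\vi_j(w)=w^{1/d}(1+O(|w|^{-1/d}))$, this gives
\begin{equation*}
\left|\frac{c_je^{-w}}{p(\vi_j(w))}\right|>\frac{\beta_2}{d}|w|^{1/d}(1+o(1)),
\end{equation*}
whereas by Corollary \ref{cor_asymptotics_f} the first two summands in the expression for $f(\vi_j(w))$ are of order $|\vi_j(w)|$ and $1/|q'(\vi_j(w))|$, both $O(|w|^{1/d})$. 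For $\beta_2$ large enough, it therefore follows that
\begin{equation*}
f(\vi_j(w))=-\frac{c_je^{-w}}{p(\vi_j(w))}\left(1+O\left(\frac{1}{\beta_2}\right)\right),
\end{equation*}
and in particular $|f(\vi_j(w))|\to\infty$ uniformly as $w\to\infty$ in the region.

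Next I would apply $q$. Since $q(z)=z^d(1+O(1/z))$ as $z\to\infty$, feeding in the large value $f(\vi_j(w))$ yields $h_j(w)=f(\vi_j(w))^d(1+O(|w|^{-1/d}))$. Raising the previous approximation to the $d$-th power and using
\begin{equation*}
p(\vi_j(w))^d=d^d\vi_j(w)^{md}(1+O(|w|^{-1/d}))=d^dw^m(1+O(|w|^{-1/d})),
\end{equation*}
where the last equality uses $(\vi_j(w)^d)^m=w^m(1+O(|w|^{-1/d}))$ and so avoids any branch ambiguity for $w^{1/d}$, I obtain
\begin{equation*}
h_j(w)=\left(\frac{-c_j}{d}\right)^d e^{-dw}w^{-m}\bigl(1+O(1/\beta_2)+O(|w|^{-1/d})\bigr).
\end{equation*}
Choosing first $\beta_2$ and then $\nu$ sufficiently large makes the error smaller than $\epsi$.

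The main delicacy is that, in contrast to Lemma \ref{lemma_asymptotics_mitte}, the point $f(\vi_j(w))$ is very far from $\vi_j(w)$ in this region, so a Taylor expansion of $q$ around $\vi_j(w)$ is useless. Instead one exploits that $f(\vi_j(w))$ itself escapes to infinity and invokes the global behaviour $q(z)\sim z^d$. The whole argument hinges on the quantitative domination $|c_je^{-w}/p(\vi_j(w))|\ge(\beta_2/d)|w|^{1/d}(1+o(1))$, which is precisely what being outside the barrier $\cH(\lambda-1,\beta_2/|c_j|,\nu)$ provides.
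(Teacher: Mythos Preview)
Your proof is correct and follows essentially the same approach as the paper: use Corollary~\ref{cor_asymptotics_f} together with the lower bound on $|e^{-w}|$ from the region constraint to see that the term $-c_je^{-w}/p(\vi_j(w))$ dominates, conclude that $|f(\vi_j(w))|\to\infty$, then apply the global asymptotic $q(z)=z^d(1+O(1/z))$ rather than a Taylor expansion, and finally rewrite $\vi_j(w)^{md}$ as $w^m(1+O(|w|^{-1/d}))$ via $q(\vi_j(w))=w$. Your remark that the Taylor expansion of $q$ about $\vi_j(w)$ is useless here, in contrast to Lemma~\ref{lemma_asymptotics_mitte}, is exactly the point.
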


\begin{proof}
By Corollary \ref{cor_asymptotics_f},
\begin{align}
f(\vi_j(w))&=\vi_j(w)-\frac{1}{q'(\vi_j(w))}\left(1+O\left(\frac{1}{|w|}\right)\right)-\frac{c_je^{-w}}{p(\vi_j(w))}\\
&=O(|w|^{1/d})-\frac{c_j}{d}e^{-w}\vi_j(w)^{-m}\left(1+O\left(\frac{1}{|w|^{1/d}}\right)\right)  \label{eq_f_links}
\end{align}
as $w\to\infty$. Note that the $O(\cdot)$-terms do not depend on $\beta_2$. For $w\in \cG_\nu\setminus \cH(\lambda-1,\beta_2/|c_j|,\nu)$, we have
\begin{equation}   \label{eq_preflarge_links}
\left|\frac{c_j}{d}e^{-w}\vi_j(w)^{-m}\right|=\left|\frac{c_j}{d}e^{-w}\right|\cdot|w|^{\lambda-1+1/d}(1+o(1))\ge\frac{\beta_2|w|^{1/d}}{2d}
\end{equation}
if $|w|$ is sufficiently large. In particular, 
$$|f(\vi_j(w))|\ge\frac{\beta_2}{4d}|w|^{1/d}$$
if $\beta_2$ and $|w|$ are sufficiently large, and hence
$$h_j(w)=q(f(\vi_j(w)))=f(\vi_j(w))^d\left(1+O\left(\frac{1}{|w|^{1/d}}\right)\right)$$
as $w\to\infty$ in $\cG_\nu\setminus \cH(\lambda-1,\beta_2/|c_j|,\nu)$. Also, by \eqref{eq_f_links} and \eqref{eq_preflarge_links},
\begin{equation}
\begin{split}
\left|\frac{f(\vi_j(w))}{(-c_j/d)e^{-w}\vi_j(w)^{-m}}-1\right|&=\left|\frac{O(|w|^{1/d})}{(-c_j/d)e^{-w}\vi_j(w)^{-m}}+O\left(\frac{1}{|w|^{1/d}}\right)\right|\\
&\le\frac{2d}{\beta_2}O(1)+O\left(\frac{1}{|w|^{1/d}}\right),
\end{split}
\end{equation}
where the $O(\cdot)$-terms do not depend on $\beta_2$. Hence, we can achieve that 
$$\left|\frac{f(\vi_j(w))^d}{((-c_j/d)e^{-w}\vi_j(w)^{-m})^d}-1\right|\le\frac{\epsi}{2}$$
by taking $\beta_2$ and $\nu$ sufficiently large. Also,
$$\left(-\frac{c_j}{d}e^{-w}\vi_j(w)^{-m}\right)^d=\left(-\frac{c_j}{d}\right)^de^{-dw}w^{-m}\left(1+O\left(\frac{1}{|w|^{1/d}}\right)\right)$$
as $w\to\infty$, whence the claim follows.
\end{proof}

We proceed similarly as in Section \ref{sec_q(F)2}, that is, we show that $h_j$ maps certain subsets of $\cG_\nu\setminus\cH(\lambda-1,\beta_2/|c_j|,\nu)$ into $\cH(\lambda,1/c^*,\nu)$. We then apply the results of Section \ref{sec_q(F)1} to show that $q(\cF(f))$ has positive density in large bounded subsets of $\cG_\nu\setminus\cH(\lambda-1,\beta_2/|c_j|,\nu).$

For $n\in\dZ,\,k\in\dN$ and $j\in\{1,...,d\}$, let $\cP_{n,k}^j$ be the set of all 
$$w\in \cH\left(\lambda-1,\frac{2^{k+2}\beta_2}{|c_j|},\nu\right)\setminus \cH\left(\lambda-1,\frac{2^{k-1}\beta_2}{|c_j|},\nu\right)$$
satisfying
$$\frac{(2n-1)\pi}{d}\le\im w\le\frac{2(n+1)\pi}{d}.$$
There are $\theta_{n,k}^j\in[-\pi,\pi)$ and $r_{n,k}^j>0$ such that for all $w\in \cP_{n,k}^j$, we have 
$$|w|=r_{n,k}^j(1+o(1))\quad\text{and }\arg(w)=\theta_{n,k}^j+o(1)$$
as $|n|\to\infty$. Let $t_{n,k}^j\in[2n\pi/d,2(n+1)\pi/d)$ with
$$t_{n,k}^j\equiv \arg(-c_j)-\frac{m}{d}\theta_{n,k}^j\mod \frac{2\pi}{d}.$$

\begin{lemma}   \label{lemma_map_links}
Let $\theta^*\in(0,\pi/(4d))$. Then there are $\beta_2,\nu>0$ such that the following holds. Let $j\in\{1,...,d\}$, $k\in\dN$ and $w\in\cH(\lambda-1,2^{k+2}\beta_2/|c_j|,\nu)\setminus\cH(\lambda-1,2^{k-1}\beta_2/|c_j|,\nu)$ such that there exists $n\in\dZ$ with $t_{n,k}^j-\pi/(4d)\le\im w\le t_{n,k}^j-\theta^*$. Let $\beta\in[2^{k-1}\beta_2,2^{k+2}\beta_2]$ such that $w\in\Gamma(\lambda-1,\beta/|c_j|)$ and let $\theta:=t_{n,k}^j-\im w$.
Then
\begin{equation}   \label{eq_re_links}
\frac{3}{4}\left(\frac{\beta}{d}\right)^dr_{n,k}^j\cos(2d\theta)<\re h_j(w)<\frac{5}{4}\left(\frac{\beta}{d}\right)^dr_{n,k}^j
\end{equation}
and
\begin{equation}   \label{eq_im_links}
\frac{3}{4\pi}\left(\frac{\beta}{d}\right)^dr_{n,k}^jd\theta<\im h_j(w)<\frac{5}{2}\left(\frac{\beta}{d}\right)^dr_{n,k}^jd\theta.
\end{equation}
\end{lemma}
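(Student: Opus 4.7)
The plan is to read off $\re h_j(w)$ and $\im h_j(w)$ directly from the asymptotic formula in Lemma \ref{lemma_asymptotics_links}, treating the error term as a small multiplicative perturbation of the principal term $M(w) := (-c_j/d)^d e^{-dw} w^{-m}$. For a parameter $\tilde\epsi>0$ to be fixed later in terms of $\theta^*$, choose $\beta_2,\nu$ so that $|h_j(w)/M(w)-1|<\tilde\epsi$. Then $|h_j(w)|=|M(w)|(1+O(\tilde\epsi))$ and $\arg h_j(w) = \arg M(w) + \delta(w)$ with $|\delta(w)|\le\arcsin\tilde\epsi$ (modulo $2\pi$).

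Next I would compute $|M(w)|$ and $\arg M(w)$. Since $w\in\Gamma(\lambda-1,\beta/|c_j|)$, Remark \ref{remark_gamma} gives $e^{-\re w}=(\beta/|c_j|)|w|^{-(\lambda-1)}$, and using $d\lambda=d-1-m$ one finds
\[
|M(w)|=\left(\frac{\beta}{d}\right)^d|w|^{-m-d(\lambda-1)}=\left(\frac{\beta}{d}\right)^d|w|=\left(\frac{\beta}{d}\right)^d r_{n,k}^j(1+o(1))
\]
as $|n|\to\infty$ uniformly for $w\in\cP_{n,k}^j$ in the given strip. For the argument, $\arg M(w)\equiv d\arg(-c_j)-d\im w-m\arg w\pmod{2\pi}$. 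Plugging $\arg w=\theta_{n,k}^j+o(1)$ and using the defining relation $dt_{n,k}^j\equiv d\arg(-c_j)-m\theta_{n,k}^j\pmod{2\pi}$ collapses this to $\arg M(w)\equiv d(t_{n,k}^j-\im w)+o(1)=d\theta+o(1)\pmod{2\pi}$. Because $d\theta\in[d\theta^*,\pi/4]\subset(0,\pi/2)$, for $\tilde\epsi$ small and $\nu$ large enough that $\arcsin\tilde\epsi+o(1)<d\theta^*/2$, the combined error $\delta(w)+o(1)$ is strictly less than $d\theta$ and the argument of $h_j(w)$ lies in $(d\theta/2,2d\theta)\subset(0,\pi/2)$, so the principal branch is unambiguous.

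It then remains trigonometry. For the upper bound on $\re h_j(w)$, use $|{\re h_j(w)}|\le|h_j(w)|\le(5/4)(\beta/d)^d r_{n,k}^j$. For the lower bound, monotonicity of $\cos$ on $[0,\pi]$ and $d\theta+\delta\le 2d\theta$ yield $\cos(\arg h_j(w))\ge\cos(2d\theta)$, so $\re h_j(w)\ge(3/4)(\beta/d)^d r_{n,k}^j\cos(2d\theta)$ after absorbing the $(1-\tilde\epsi)(1-o(1))$ factor. For $\im h_j(w)$, I would use $\sin x\ge (2/\pi)x$ on $[0,\pi/2]$ together with $\arg h_j(w)\ge d\theta/2$ to obtain the lower bound $(3/(4\pi))(\beta/d)^d r_{n,k}^j\,d\theta$, and the elementary $\sin x\le x$ together with $\arg h_j(w)\le 2d\theta$ to obtain the upper bound $(5/2)(\beta/d)^d r_{n,k}^j\,d\theta$.

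The main obstacle is the lower bound on $\im h_j(w)$: the hypothesis $\theta\ge\theta^*$ is essential, because without a uniform positive lower bound on $\theta$ the additive error $\arcsin\tilde\epsi+o(1)$ in $\arg h_j(w)$ could swallow $d\theta$ and destroy the comparison with $d\theta$. The role of the upper bound $\theta\le\pi/(4d)$ is equally crucial: it keeps $\arg h_j(w)$ safely bounded away from $\pi/2$ so that the trigonometric identities used above are valid on the principal branch, and it forces $d\theta+\delta\le 2d\theta$, which is what gives the clean constant $\cos(2d\theta)$ in the real-part estimate.
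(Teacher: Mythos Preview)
Your argument is correct and follows essentially the same route as the paper: invoke Lemma~\ref{lemma_asymptotics_links} to control $h_j(w)/M(w)$, compute $|M(w)|=(\beta/d)^d|w|$ from $w\in\Gamma(\lambda-1,\beta/|c_j|)$ and $d(\lambda-1)=-1-m$, compute $\arg M(w)\equiv d\theta+o(1)$ via the defining relation for $t_{n,k}^j$, and then apply $\sin x\ge(2/\pi)x$ and $\sin x\le x$ on $[0,\pi/2]$. One small remark on your closing commentary: the inequality $d\theta+\delta\le 2d\theta$, i.e.\ $\delta\le d\theta$, is secured by the \emph{lower} bound $\theta\ge\theta^*$ together with $\tilde\epsi$ small, not by the upper bound $\theta\le\pi/(4d)$; the latter's role is only to keep $2d\theta\le\pi/2$ so the trigonometric estimates apply.
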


\begin{proof}
Let $\epsi>0$ be small. By Lemma \ref{lemma_asymptotics_links},
$$\left|\frac{h_j(w)}{(-c_j/d)^de^{-dw}w^{-m}}-1\right|<\epsi$$
if $\beta_2$ and $\nu$ are sufficiently large. Thus,
\begin{equation}  \label{eq_mod_links}
\left(1-\epsi\right)\left(\frac{|c_j|}{d}\right)^de^{-d\re w}|w|^{-m}\le|h_j(w)|\le\left(1+\epsi\right)\left(\frac{|c_j|}{d}\right)^de^{-d\re w}|w|^{-m}.
\end{equation}
Since $w\in\Gamma(\lambda-1,\beta/|c_j|)$, we have
$$|w|^{-1-m}e^{-d\re w}=|w|^{d(\lambda-1)}e^{-d\re w}=\left(\frac{\beta}{|c_j|}\right)^d.$$
Thus,
$$\left(\frac{|c_j|}{d}\right)^de^{-d\re w}|w|^{-m}=\left(\frac{\beta}{d}\right)^d|w|=\left(\frac{\beta}{d}\right)^dr_{n,k}^j(1+o(1))$$ as $|n|\to\infty$. Inserting the last equation into \eqref{eq_mod_links} yields
\begin{equation}  \label{eq_mod_links_detailed}
\frac{3}{4}\left(\frac{\beta}{d}\right)^dr_{n,k}^j<|h_j(w)|<\frac{5}{4}\left(\frac{\beta}{d}\right)^dr_{n,k}^j
\end{equation}
if $\epsi$ is sufficiently small and $|n|$ is sufficiently large.
Also, by Lemma \ref{lemma_asymptotics_links},
\begin{equation}  \label{eq_arg_links}
\left|\arg (h_j(w))-\arg\left(\left(-\frac{c_j}{d}\right)^de^{-dw}w^{-m}\right)\right|<\arcsin(\epsi)\le\frac{\pi}{2}\epsi.
\end{equation}
We have
\begin{equation}
\begin{split}
\arg\left(\left(-\frac{c_j}{d}\right)^de^{-dw}w^{-m}\right)&\equiv d\arg(-c_j)-d\im w-m\arg w\\
&\equiv d\arg(-c_j)-dt_{n,k}^j+d\theta-m\theta_{n,k}^j+o(1)\\
&\equiv d\theta+o(1) \mod 2\pi
\end{split}
\end{equation}
as $|n|\to\infty$. By \eqref{eq_arg_links}, this yields
\begin{equation}  \label{eq_arg_links_detailed}
\frac{d\theta}{2}<\arg h_j(w)<2d\theta
\end{equation}
if $\epsi$ is sufficiently small compared to $\theta^*$. By \eqref{eq_mod_links_detailed}, \eqref{eq_arg_links_detailed} and the fact that $(2/\pi)x\le\sin x\le x$ for $0\le x\le\pi/2$, we obtain
\begin{equation}
\begin{split}
\re h_j(w)&\le|h_j(w)|<\frac{5}{4}\left(\frac{\beta}{d}\right)^dr_{n,k}^j,\\
\re h_j(w)&=|h_j(w)|\cos(\arg h_j(w))>\frac{3}{4}\left(\frac{\beta}{d}\right)^dr_{n,k}^j\cos(2d\theta),\\
\im h_j(w)&=|h_j(w)|\sin(\arg h_j(w))<\frac{5}{4}\left(\frac{\beta}{d}\right)^dr_{n,k}^j\sin(2d\theta)\le\frac{5}{2}\left(\frac{\beta}{d}\right)^dr_{n,k}^jd\theta,\\
\im h_j(w)&=|h_j(w)|\sin(\arg h_j(w))>\frac{3}{4}\left(\frac{\beta}{d}\right)^dr_{n,k}^j\sin\left(\frac{d\theta}{2}\right)\ge\frac{3}{4\pi}\left(\frac{\beta}{d}\right)^dr_{n,k}^jd\theta. \qedhere
\end{split}
\end{equation}
\end{proof}

Let us now define several sets. We start with subsets $\cT_{n,k}^j,\tilde T_{n,k}^j\subset\cG_\nu\setminus\cH(\lambda-1,\beta_2/|c_j|,\nu)$.  Let 
$$0<\theta_2<\frac{1}{2\cdot4^{d+1}d\pi}\arccos\left(\frac{11}{12}\right).$$
For $n\in\dZ$, $k\in\dN$ and $j\in\{1,...,d\}$, let $\cT_{n,k}^j$ be the set of all 
$$w\in \cH\left(\lambda-1,\frac{2^{k+1}\beta_2}{|c_j|},\nu\right)\setminus \cH\left(\lambda-1,\frac{2^k\beta_2}{|c_j|},\nu\right)$$
satisfing
$$t_{n,k}^j-\theta_2\le\im w\le t_{n,k}^j-\frac{\theta_2}{2}.$$
Also, let $\tilde \cT_{n,k}^j$ be the set of all 
$$w\in \cH\left(\lambda-1,\frac{2^{k+2}\beta_2}{|c_j|},\nu\right)\setminus \cH\left(\lambda-1,\frac{2^{k-1}\beta_2}{|c_j|},\nu\right)$$
satisfying
$$t_{n,k}^j-4^{d+1}\pi\theta_2\le\im w\le t_{n,k}^j-\frac{1}{10\cdot4^d\pi}\theta_2.$$
Note that $\cT_{n,k}^j\subset\tilde \cT_{n,k}^j$. See Figure \ref{fig_T} for an illustration of $\cT_{n,k}^j$ and $\tilde\cT_{n,k}^j$.

\begin{figure}[ht]
\centering
\begin{tikzpicture}
\draw[dashed, smooth, samples = 20, domain = -1.5:-0.5] plot(\x, {0.3*exp(-2*\x)});
\draw[dashed, smooth, samples = 20, domain = -4.5:-3.5] plot(\x, {0.3*exp(2*(-3-\x))});
\draw[dashed, smooth, samples = 20, domain = -7.5:-6.5] plot(\x, {0.3*exp(2*(-6-\x))});
\draw[dashed, smooth, samples = 20, domain = -10.5:-9.5] plot(\x, {0.3*exp(2*(-9-\x))});

\draw (-0.5, 0.87) node[below]{$\Gamma\left(\lambda-1,\dfrac{2^{k-1}\beta_2}{|c_j|}\right)$};
\draw (-4.5, 6) node[above]{$\Gamma\left(\lambda-1,\dfrac{2^{k}\beta_2}{|c_j|}\right)$};
\draw (-6.5, 0.87) node[below]{$\Gamma\left(\lambda-1,\dfrac{2^{k+1}\beta_2}{|c_j|}\right)$};
\draw (-10.5, 6) node[above]{$\Gamma\left(\lambda-1,\dfrac{2^{k+2}\beta_2}{|c_j|}\right)$};

\draw[dashed] (-11, 5.15) -- (0,5.15);
\draw (0, 5.15) node[right]{$\im w = t_{n,k}^j$};

\draw[ultra thick, smooth, samples = 10, domain = -4.35:-4.26] plot(\x, {0.3*exp(2*(-\x-3))});
\draw[ultra thick, smooth, samples = 10, domain = -7.35:-7.26] plot(\x, {0.3*exp(2*(-\x-6))});
\draw[ultra thick] (-7.26, 3.75) -- (-4.26, 3.75);
\draw[ultra thick] (-7.35, 4.45) -- (-4.35, 4.45);

\draw[ultra thick, smooth, samples = 10, domain = -1.4:-0.92] plot(\x, {0.3*exp(-2*\x)});
\draw[ultra thick, smooth, samples = 10, domain = -10.4:-9.92] plot(\x, {0.3*exp(2*(-\x-9))});
\draw[ultra thick] (-9.92, 1.9) -- (-0.92, 1.9);
\draw[ultra thick] (-10.4, 4.9) -- (-1.4, 4.9);

\draw (-8.61, 3.1) node{$\tilde \cT_{n,k}^j$};
\draw (-5.75 , 4.1) node{$\cT_{n,k}^j$};

\end{tikzpicture}
\caption{An illustration of the sets $\cT_{n,k}^j$ and $\tilde \cT_{n,k}^j$.}  
\label{fig_T}
\end{figure}

Moreover, let $\cU_{n,k}^j$ be the rectangle containing all $v\in\dC$ satisfying
$$\frac{11}{16}\left(\frac{2^{k}\beta_2}{d}\right)^dr_{n,k}^j<\re v<\frac{5}{4}\left(\frac{2^{k+1}\beta_2}{d}\right)^dr_{n,k}^j$$
and
$$\frac{3}{8\pi}\left(\frac{2^k\beta_2}{d}\right)^dr_{n,k}^jd\theta_2<\im v<\frac{5}{2}\left(\frac{2^{k+1}\beta_2}{d}\right)^dr_{n,k}^jd\theta_2.$$
Also, let $\tilde \cU_{n,k}^j$ be the rectangle containing all $v\in\dC$ satisfying
$$\frac{5}{8}\left(\frac{2^{k}\beta_2}{d}\right)^dr_{n,k}^j<\re v<\frac{11}{8}\left(\frac{2^{k+1}\beta_2}{d}\right)^dr_{n,k}^j$$
and
$$\frac{1}{4\pi}\left(\frac{2^{k}\beta_2}{d}\right)^dr_{n,k}^jd\theta_2<\im v<3\left(\frac{2^{k+1}\beta_2}{d}\right)^dr_{n,k}^jd\theta_2.$$
Note that $\cU_{n,k}^j\subset\tilde\cU_{n,k}^j$.

\begin{lemma}   \label{lemma_UinH}
There is $n_0\in\dN$ such that for all $n\in\dZ$ with $|n|\ge n_0$, $k\in\dN$ and $j\in\{1,...,d\}$, we have
$$\tilde \cU_{n,k}^j\subset \cH\left(\lambda,\frac{1}{c^*},\nu\right)$$
with $c^*=\max_l|c_l|$ as defined in \eqref{eq_cstar}.
\end{lemma}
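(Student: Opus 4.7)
The plan is to verify directly that the two defining inequalities of $\tilde \cU_{n,k}^j$ force those of $\cH(\lambda,1/c^*,\nu)$ — namely $|\im v|\ge\nu$ and $\re v\ge \lambda\log|v|+\log c^*$ — provided $|n|$ is sufficiently large. The whole lemma is essentially a size comparison: the natural scale of $\tilde \cU_{n,k}^j$ in both coordinates is the quantity $M:=(2^k\beta_2/d)^d r_{n,k}^j$, which tends to $\infty$ as $|n|\to\infty$, uniformly in $k$ and $j$.

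First I would introduce $M$ and observe that for $v\in\tilde \cU_{n,k}^j$ one has $\re v,\im v>0$ with $\re v\ge \tfrac{5}{8}M$, $\im v\ge \tfrac{d\theta_2}{4\pi}M$ and $|v|\le C_1 M$, where $C_1$ depends only on $d$ and $\theta_2$ (simply by reading off the defining rectangle of $\tilde \cU_{n,k}^j$ and using $(2^{k+1})^d=2^d\cdot 2^{kd}$).

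Second I would establish a uniform lower bound $r_{n,k}^j\ge |n|\pi/(2d)$ for $|n|$ sufficiently large, uniformly in $k$ and $j$. This comes from the fact that every $w\in\cP_{n,k}^j$ satisfies $|w|\ge |\im w|\ge (|2n|-1)\pi/d$, combined with the defining asymptotic $|w|=r_{n,k}^j(1+o(1))$ as $|n|\to\infty$. With this in hand, $\im v\ge \tfrac{d\theta_2}{4\pi}(\beta_2/d)^d r_{n,k}^j$ (using $2^{kd}\ge 1$) already exceeds $\nu$ once $|n|\ge n_0$, uniformly in $k,j$.

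Finally, for the condition $\re v\ge \lambda\log|v|+\log c^*$, I would combine
\[
\log|v|\le \log C_1+dk\log 2+d\log(\beta_2/d)+\log r_{n,k}^j
\]
with the lower bound $\re v\ge \tfrac{5}{8}\cdot 2^{kd}(\beta_2/d)^d r_{n,k}^j$. The right-hand side $\lambda\log|v|+\log c^*$ grows linearly in $k$ and logarithmically in $r_{n,k}^j$, whereas the left-hand side grows exponentially in $k$ (through $2^{kd}$) and linearly in $r_{n,k}^j$. Hence for $k$ fixed the inequality holds whenever $|n|$ (and thus $r_{n,k}^j$) is large enough, while the exponential gain in $k$ gives uniformity in $k\in\dN$. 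The only mildly delicate step is the uniform lower bound on $r_{n,k}^j$ in the second paragraph; everything else is a routine algebraic comparison.
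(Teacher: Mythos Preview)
Your proposal is correct and follows the same overall strategy as the paper: verify $|\im v|\ge\nu$ via $r_{n,k}^j\to\infty$ uniformly in $k$, and then check the inequality $\re v\ge\lambda\log|v|+\log c^*$ by a size comparison.

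The one noteworthy difference is in how the second inequality is handled. You bound $\log|v|$ above in terms of $k$ and $r_{n,k}^j$ and then argue growth rates (exponential in $k$, linear in $r_{n,k}^j$) to get uniformity. The paper instead observes directly from the defining rectangle that
\[
\im v<\frac{24}{5}\,2^d d\theta_2\cdot\re v,
\]
so that $|v|\le(1+\tfrac{24}{5}2^d d\theta_2)\re v$ with a constant independent of $k$. From $\re v\ge c|v|$ and $|v|\to\infty$ the inequality $\re v\ge\lambda\log|v|+\log c^*$ is then immediate, with no need to track $k$ at all. This is a bit slicker and also sidesteps a small wrinkle in your version: when $\lambda<0$, your upper bound on $\log|v|$ gives the wrong direction for $\lambda\log|v|$, so you would need a one-line case distinction (the case $\lambda\le 0$ being trivial since then $\lambda\log|v|+\log c^*$ is bounded above).
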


\begin{proof}
Let $v\in \tilde \cU_{n,k}^j$. Note that $r_{n,k}^j\to\infty$ as $|n|\to\infty$ uniformly in $k$. In particular,
$$\im v>\frac{1}{4\pi}\left(\frac{2^{k}\beta_2}{d}\right)^dr_{n,k}^jd\theta_2\ge\nu$$
if $|n|$ is sufficiently large. Also, 
$$\im v<3\left(\frac{2^{k+1}\beta_2}{d}\right)^dr_{n,k}^jd\theta_2=\frac{24}{5}2^dd\theta_2\cdot\frac{5}{8}\left(\frac{2^k\beta_2}{d}\right)^dr_{n,k}^j<\frac{24}{5}2^dd\theta_2\re v,$$
and hence
$$|v|\le|\re v|+|\im v|<\left(1+\frac{24}{5}2^dd\theta_2\right)\re v.$$
Thus,
$$\re v\ge\frac{1}{1+(24/5)2^dd\theta_2}|v|\ge\lambda\log|v|-\log\frac{1}{c^*}$$
if $|n|$ and hence $r_{n,k}^j$ and $|v|$ are sufficiently large. 
\end{proof}

\begin{lemma}   \label{lemma_image_T}
There are $\beta_2, \nu>0$ such that for all $j\in\{1,...,d\}$, $k\in\dN$ and $n\in\dZ$ with $|t_{n,k}^j|>\nu+4^{d+1}\pi\theta_2$, we have
$$h_j(\cT_{n,k}^j)\subset \cU_{n,k}^j\quad\text{and } h_j(\tilde \cT_{n,k}^j)\supset \tilde \cU_{n,k}^j.$$
\end{lemma}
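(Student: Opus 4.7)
The plan is to mimic the proof of Lemma \ref{lemma_mapQ_mitte} with Lemma \ref{lemma_map_links} playing the role of Lemma \ref{lemma_map_mitte}. The condition $|t_{n,k}^j|>\nu+4^{d+1}\pi\theta_2$ guarantees that every $w\in\tilde\cT_{n,k}^j$ satisfies $|\im w|\ge\nu$, placing it in the regime where Lemma \ref{lemma_map_links} applies (with $\theta^\ast:=\theta_2/(10\cdot 4^d\pi)$). First I would establish $h_j(\cT_{n,k}^j)\subset\cU_{n,k}^j$; next show $h_j(\partial\tilde\cT_{n,k}^j)\cap\tilde\cU_{n,k}^j=\emptyset$; and finally conclude $\tilde\cU_{n,k}^j\subset h_j(\tilde\cT_{n,k}^j)$ by a connectedness argument.

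For the first inclusion, take $w\in\cT_{n,k}^j$, set $\theta:=t_{n,k}^j-\im w\in[\theta_2/2,\theta_2]$, and choose $\beta\in[2^k\beta_2,2^{k+1}\beta_2]$ with $w\in\Gamma(\lambda-1,\beta/|c_j|)$. The choice $\theta_2<\arccos(11/12)/(2\cdot 4^{d+1}d\pi)$ yields $\cos(2d\theta)>11/12$, and the four estimates of Lemma \ref{lemma_map_links} then match the inequalities defining $\cU_{n,k}^j$ after routine arithmetic with the extremal values of $(\beta/d)^d$. For the boundary estimates I would handle each of the four sides of $\tilde\cT_{n,k}^j$ separately. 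On the inner arc $\Gamma(\lambda-1,2^{k-1}\beta_2/|c_j|)$, Lemma \ref{lemma_map_links} forces $\re h_j(w)<\tfrac{5}{8}(2^k\beta_2/d)^d r_{n,k}^j$; on the outer arc $\Gamma(\lambda-1,2^{k+2}\beta_2/|c_j|)$, together with $\cos(2d\cdot 4^{d+1}\pi\theta_2)>11/12$ it gives $\re h_j(w)>\tfrac{11}{8}(2^{k+1}\beta_2/d)^d r_{n,k}^j$. On the upper segment $\im w=t_{n,k}^j-\theta_2/(10\cdot 4^d\pi)$, the upper bound of the lemma with $\beta=2^{k+2}\beta_2$ yields $\im h_j(w)<\tfrac{1}{4\pi}(2^k\beta_2/d)^d r_{n,k}^j d\theta_2$, and on the lower segment $\im w=t_{n,k}^j-4^{d+1}\pi\theta_2$, the lower bound with $\beta=2^{k-1}\beta_2$ gives $\im h_j(w)>3(2^{k+1}\beta_2/d)^d r_{n,k}^j d\theta_2$. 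In each case the estimate contradicts membership in $\tilde\cU_{n,k}^j$.

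For the topological conclusion, $\tilde\cT_{n,k}^j$ is compact and $h_j$ is holomorphic in a neighbourhood of it (for $\beta_2,\nu$ sufficiently large there are no critical points or poles), hence an open map on its interior. Therefore $h_j(\tilde\cT_{n,k}^j)\cap\tilde\cU_{n,k}^j$ equals $h_j(\mathrm{int}\,\tilde\cT_{n,k}^j)\cap\tilde\cU_{n,k}^j$ and is open in $\tilde\cU_{n,k}^j$, while being closed there because $h_j(\tilde\cT_{n,k}^j)$ is compact. The first inclusion ensures this set is nonempty, so connectedness of the rectangle $\tilde\cU_{n,k}^j$ forces $\tilde\cU_{n,k}^j\subset h_j(\tilde\cT_{n,k}^j)$. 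The main obstacle is the bookkeeping: the numerical constants $11/16,5/4,3/(8\pi),5/2$ in $\cU_{n,k}^j$ and $5/8,11/8,1/(4\pi),3$ in $\tilde\cU_{n,k}^j$, together with the horizontal thresholds $1/(10\cdot 4^d\pi)$ and $4^{d+1}\pi$ in $\tilde\cT_{n,k}^j$, are engineered precisely so that the factors of $2^d$ produced by crossing the $\Gamma$-curves produce the required strict inequalities simultaneously. No conceptual novelty beyond Lemma \ref{lemma_mapQ_mitte} is needed.
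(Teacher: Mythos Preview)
Your proposal is correct and follows essentially the same approach as the paper's proof: apply Lemma \ref{lemma_map_links} to obtain $h_j(\cT_{n,k}^j)\subset\cU_{n,k}^j$, then check each of the four boundary pieces of $\tilde\cT_{n,k}^j$ to show $h_j(\partial\tilde\cT_{n,k}^j)\cap\tilde\cU_{n,k}^j=\emptyset$, and conclude $\tilde\cU_{n,k}^j\subset h_j(\tilde\cT_{n,k}^j)$ from the first inclusion. Your numerical verifications match the paper's line by line; the only difference is that you spell out the open--closed connectedness argument for the final step, whereas the paper leaves it implicit.
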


\begin{proof}
First suppose that $w\in\cT_{n,k}^j$. Then by Lemma \ref{lemma_map_links} and the fact that 
$\theta_2<1/(2\cdot4^{d+1}d\pi)\arccos(11/12)<1/(2d)\arccos(11/12),$ we have
$$\re h_j(w)>\frac{3}{4}\left(\frac{2^k\beta_2}{d}\right)^dr_{n,k}^j\cos(2d\theta_2)>\frac{11}{16}\left(\frac{2^k\beta_2}{d}\right)^dr_{n,k}^j,$$
$$\re h_j(w)<\frac{5}{4}\left(\frac{2^{k+1}\beta_2}{d}\right)^dr_{n,k}^j,$$
$$\im h_j(w)>\frac{3}{8\pi}\left(\frac{2^k\beta_2}{d}\right)^dr_{n,k}^jd\theta_2,$$
$$\im h_j(w)<\frac{5}{2}\left(\frac{2^{k+1}\beta_2}{d}\right)^dr_{n,k}^jd\theta_2.$$
Hence, $h_j(\cT_{n,k}^j)\subset\cU_{n,k}^j$.

Also, Lemma \ref{lemma_map_links} yields the following. If $w\in\Gamma(\lambda-1,2^{k-1}\beta_2/|c_j|)$ with $t_{n,k}^j-4^{d+1}\pi\theta_2\le\im w\le t_{n,k}^j-1/(10\cdot4^d\pi)\theta_2$, then
$$\re h_j(w)<\frac{5}{4}\left(\frac{2^{k-1}\beta_2}{d}\right)^dr_{n,k}^j\le\frac{5}{8}\left(\frac{2^k\beta_2}{d}\right)^dr_{n,k}^j.$$
If $w\in\Gamma(\lambda-1,2^{k+2}\beta_2/|c_j|)$ with $t_{n,k}^j-4^{d+1}\pi\theta_2\le\im w\le t_{n,k}^j-1/(10\cdot4^d\pi)\theta_2$, then using that $\theta_2<1/(2\cdot4^{d+1}d\pi)\arccos(11/12)$, we get
$$\re h_j(w)>\frac{3}{4}\left(\frac{2^{k+2}\beta_2}{d}\right)^dr_{n,k}^j\cos(2d4^{d+1}\pi\theta_2)>\frac{11}{8}\left(\frac{2^{k+1}\beta_2}{d}\right)^dr_{n,k}^j.$$
If $w\in \cH(\lambda-1,2^{k+2}\beta_2/|c_j|,\nu)\setminus \cH(\lambda-1,2^{k-1}\beta_2/|c_j|,\nu)$ and $\im w = t_{n,k}^j-4^{d+1}\pi\theta_2$, then
$$\im h_j(w)>3\left(\frac{2^{k+1}\beta_2}{d}\right)^dr_{n,k}^jd \theta_2.$$
If $w\in \cH(\lambda-1,2^{k+2}\beta_2/|c_j|,\nu)\setminus \cH(\lambda-1,2^{k-1}\beta_2/|c_j|,\nu)$ and $\im w = t_{n,k}^j-1/(10\cdot4^d\pi)\theta_2$, then
$$\im h_j(w)<\frac{1}{4\pi}\left(\frac{2^{k}\beta_2}{d}\right)^dr_{n,k}^jd\theta_2.$$
Thus, $h_j(\partial \tilde \cT_{n,k}^j)\cap \tilde \cU_{n,k}^j=\emptyset$. Since $\cT_{n,k}^j\subset\tilde \cT_{n,k}^j$ and $h_j(\cT_{n,k}^j)\subset \cU_{n,k}^j\subset\tilde \cU_{n,k}^j$, we obtain that $h_j(\tilde \cT_{n,k}^j)\supset \tilde \cU_{n,k}^j.$
\end{proof}

Next, we show that the density of $q(\cF_j)$ in $\tilde \cT_{n,k}^j$ is bounded below by a positive constant.

\begin{lemma}  \label{lemma_FinT}
There are $\delta>0$ and $n_1\in\dN$ such that for all $j\in\{1,...,d\}$, $k\in\dN$ and $n\in\dZ$ with $|n|\ge n_1$, we have
$$\dens(q(\cF_j), \tilde\cT_{n,k}^j)\ge\delta.$$
\end{lemma}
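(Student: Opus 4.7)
The plan is to adapt the proof of Lemma \ref{lemma_FinQ_mitte} almost line by line, with $(\cQ_{n,k}^j,\tilde\cQ_{n,k}^j,\cR_{n,k}^j,\tilde\cR_{n,k}^j,\beta_1)$ replaced by $(\cT_{n,k}^j,\tilde\cT_{n,k}^j,\cU_{n,k}^j,\tilde\cU_{n,k}^j,\beta_2)$ and with Lemma \ref{lemma_mapQ_mitte} replaced by Lemma \ref{lemma_image_T}. First I would argue, exactly as in Lemma \ref{lemma_FinQ_mitte} and using the description of the critical values of $f$ from Section \ref{sec_singularities}, that for sufficiently large $\beta_2$, $\nu$ and $|n|$, the map $h_j$ has no critical points in $\tilde\cT_{n,k}^j$. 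By Lemma \ref{lemma_image_T} and the simple connectivity of $\tilde\cU_{n,k}^j$, the component $\cU$ of $h_j^{-1}(\tilde\cU_{n,k}^j)$ containing $\cT_{n,k}^j$ is then mapped conformally onto $\tilde\cU_{n,k}^j$ by $h_j$; let $\psi:\tilde\cU_{n,k}^j\to\cU$ be the inverse. Since Lemma \ref{lemma_UinH} yields $\tilde\cU_{n,k}^j\subset\cH(\lambda,1/c^*,\nu)\subset q(\cS_l)$ for every $l\in\{1,...,d\}$, continuity and connectedness furnish some $l$ (possibly depending on $n,k,j$) with $(f\circ\vi_j)(\tilde\cT_{n,k}^j)\subset\cS_l$, and then $\psi(q(\cF_l)\cap\tilde\cU_{n,k}^j)=q(\cF_j)\cap\cU$.

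The crucial observation I would rely on is that, by direct inspection of the definitions, the pair $(\cU_{n,k}^j,\tilde\cU_{n,k}^j)$ is obtained from a fixed pair of axis-parallel rectangles $(U,\tilde U)$, depending only on $d,\beta_2,\theta_2$, by the real affine scaling $\sigma_{n,k}(v)=\alpha_{n,k}v+\gamma_{n,k}$ with $\alpha_{n,k}>0$. Since $\sigma_{n,k}$ has constant derivative, the distortion of $\psi$ on $\cU_{n,k}^j$ equals that of $\psi\circ\sigma_{n,k}:\tilde U\to\cU$ on $U$; the latter is bounded by a constant depending only on the pair $(U,\tilde U)$, via the Koebe distortion theorem composed with a Riemann map $\dD\to\tilde U$. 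This produces $c>0$, independent of $j,k,n$, with
\[
\dens(q(\cF_j),\tilde\cT_{n,k}^j)\ge\dens(q(\cF_j),\cU)\cdot\dens(\cU,\tilde\cT_{n,k}^j)\ge c\,\dens(q(\cF_l),\cU_{n,k}^j)\cdot\dens(\cT_{n,k}^j,\tilde\cT_{n,k}^j).
\]

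For the two factors on the right, I would note that the side lengths of $\cU_{n,k}^j$ are proportional to $r_{n,k}^j$, which tends to $\infty$ uniformly in $k$ as $|n|\to\infty$; so for $|n|$ sufficiently large these exceed the constant $D_0$ of Lemma \ref{lemma_F_rechts}, and together with $\cU_{n,k}^j\subset\cH(\lambda,1/c^*,\nu)$ this gives $\dens(q(\cF_l),\cU_{n,k}^j)\ge\eta_0$. Finally, $\dens(\cT_{n,k}^j,\tilde\cT_{n,k}^j)$ is bounded below by a positive constant depending only on $d,\beta_2,\theta_2$, as a direct computation via Lemma \ref{lemma_gamma} shows (the horizontal and vertical extents of $\cT_{n,k}^j$ and $\tilde\cT_{n,k}^j$ differ only by factors bounded in these parameters). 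Taking $\delta$ to be the product of these bounds yields the claim. The hard part will be the uniform distortion estimate in the second paragraph; it is where the affine self-similarity of the rectangle pair $(\cU_{n,k}^j,\tilde\cU_{n,k}^j)$ has to be exploited, since the Euclidean shapes themselves are highly anisotropic and a direct application of Koebe to a disk inscribed in $\tilde\cU_{n,k}^j$ would not suffice.
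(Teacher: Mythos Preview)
Your proposal is correct and follows essentially the same approach as the paper, which itself only sketches the argument by pointing back to Lemma \ref{lemma_FinQ_mitte}. Your discussion of the uniform distortion bound via the affine self-similarity of the pair $(\cU_{n,k}^j,\tilde\cU_{n,k}^j)$ is more explicit than what the paper offers, and is exactly the right way to see why the Koebe constant is independent of $n,k,j$. One small imprecision: the inclusion $(f\circ\vi_j)(\tilde\cT_{n,k}^j)\subset\cS_l$ does not follow directly from Lemma \ref{lemma_UinH}, since that lemma only controls $\tilde\cU_{n,k}^j$, not $h_j(\tilde\cT_{n,k}^j)$; the paper sidesteps this by asserting the inclusion only for the component $\cV=\cU$, where it is immediate from $h_j(\cU)=\tilde\cU_{n,k}^j\subset\cG$, and that is all you need for $\psi(q(\cF_l)\cap\tilde\cU_{n,k}^j)=q(\cF_j)\cap\cU$.
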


\begin{proof}
We only sketch the proof, since it is similar to the one of Lemma \ref{lemma_FinQ_mitte}.  By Lemma \ref{lemma_UinH},
$$\tilde\cU_{n,k}^j\subset\cH\left(\lambda,\frac{1}{c^*},\nu\right).$$ 
By Lemma \ref{lemma_image_T}, $h_j(\tilde \cT_{n,k}^j)\supset \tilde \cU_{n,k}^j$ and $h_j(\cT_{n,k}^j)\subset \cU_{n,k}^j$. Let $\cV\subset\tilde\cT_{n,k}^j$ be the component of $h_j^{-1}(\tilde\cU_{n,k}^j)$ containing $\cT_{n,k}^j$.
As in the proof of Lemma \ref{lemma_FinQ_mitte}, we get that $f(\vi_j(\cV))\subset \cS_l$ for some $l\in\{1,...,d\}$, and that
$$\dens(q(\cF_j), \tilde \cT_{n,k}^j)\ge c\dens(q(\cF_l), \cU_{n,k}^j)\cdot\dens(\cT_{n,k}^j,\tilde \cT_{n,k}^j)$$
for some $c>0$ independent of $n,k$ and $j$. If $|n|$ and hence $r_{n,k}^j$ is sufficiently large, then by Lemma \ref{lemma_F_rechts}, 
$$\dens(q(\cF_l), \cU_{n,k}^j)\ge\eta_0.$$
Also, the density of $\cT_{n,k}^j$ in $\tilde\cT_{n,k}^j$ is bounded below independent of $n,k$ and $j$, whence the claim follows.
\end{proof}

The final result of this section says that the density of $q(\cF_j)$ in large rectangles in $\cG_\nu\setminus\cH(\lambda-1, \beta_2/|c_j|,\nu)$ is bounded below. 

\begin{lemma}  \label{lemma_F_links}
There are $\beta_2,\nu,D_2,\eta_2>0$ such that for all $j\in\{1,...,d\}$ and any rectangle $\cR\subset \cG_\nu\setminus \cH(\lambda-1,\beta_2/|c_j|,\nu)$ with sides parallel to the real and imaginary axis and side lengths at least $D_2$, we have 
$$\dens(q(\cF_j),\cR)\ge\eta_2.$$
\end{lemma}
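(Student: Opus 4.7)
The plan is to mirror the proof of Lemma \ref{lemma_F_mitte}: handle rectangles of bounded size by embedding a suitable set $\tilde \cT_{n,k}^j$ inside them and invoking Lemma \ref{lemma_FinT}, then deduce the general case by decomposition. First I will take $\beta_2, \nu$ large enough that Lemma \ref{lemma_FinT}, Lemma \ref{lemma_image_T}, and Lemma \ref{lemma_UinH} all apply, enlarging $\nu$ if necessary so that any interval $[2n\pi/d, 2(n+1)\pi/d)$ contained in $[\nu,\infty)$ or $(-\infty,-\nu]$ automatically satisfies $|n|\ge n_1$. I then set
$$D_2 \ge \max\bigl\{5\log 8,\;\; 2\pi/d + 4^{d+1}\pi\theta_2 + 1\bigr\}.$$

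Consider a rectangle $\cR = [x_1,x_2]\times[y_1,y_2] \subset \cG_\nu \setminus \cH(\lambda-1,\beta_2/|c_j|,\nu)$ with both side lengths in $[D_2,2D_2]$; by symmetry assume $y_1\ge\nu$. The exclusion of $\cH(\lambda-1,\beta_2/|c_j|,\nu)$ forces $x_2 < \gamma_{\lambda-1,\beta_2/|c_j|}(y)$ for every $y\in[y_1,y_2]$, while Lemma \ref{lemma_gamma}(iv) shows that the curves $\Gamma(\lambda-1, 2^l\beta_2/|c_j|)$ shift leftward by approximately $l\log 2$ as $l$ grows. Since $D_2\ge 5\log 8$ comfortably exceeds the horizontal extent of any $\tilde\cT$-set (which is between $(2/3)\log 8$ and $2\log 8$), I will pick $k\ge 1$ so that for all $y\in[y_1,y_2]$,
$$x_1 \le \gamma_{\lambda-1, 2^{k+2}\beta_2/|c_j|}(y)\quad\text{and}\quad \gamma_{\lambda-1, 2^{k-1}\beta_2/|c_j|}(y) \le x_2.$$
With such $k$ fixed, the inequality $y_2 - y_1 \ge 2\pi/d + 4^{d+1}\pi\theta_2 + 1$ forces some interval $[2n\pi/d,2(n+1)\pi/d)$ to lie inside $[y_1 + 4^{d+1}\pi\theta_2,\, y_2]$, placing the vertical extent of $\tilde\cT_{n,k}^j$ inside $\cR$; the choice of $\nu$ then guarantees $|n|\ge n_1$. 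Hence $\tilde\cT_{n,k}^j \subset \cR$.

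Lemma \ref{lemma_FinT} then gives $\meas(q(\cF_j)\cap\tilde\cT_{n,k}^j)\ge \delta\,\meas(\tilde\cT_{n,k}^j)$, and Lemma \ref{lemma_gamma}(iv) (horizontal extent at least $(2/3)\log 8$) together with the fixed vertical extent $4^{d+1}\pi\theta_2 - \theta_2/(10\cdot 4^d\pi)$ yields $\meas(\tilde\cT_{n,k}^j)\ge c_0$ for a constant $c_0>0$ independent of $n,k,j$. Combining,
$$\dens(q(\cF_j),\cR) \ge \frac{\delta\,c_0}{4D_2^2} =: \eta_2,$$
and a general rectangle with both side lengths $\ge D_2$ decomposes, up to boundary, into rectangles with side lengths in $[D_2,2D_2]$, transferring the bound. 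The main delicacy is the joint fitting argument: the constants $D_2$ and $\nu$ must be coordinated so that the horizontal containment between the $\Gamma$-curves and the vertical containment around a $t_{n,k}^j$ with $|n|\ge n_1$ can always be arranged simultaneously—exactly what the two inequalities above achieve.
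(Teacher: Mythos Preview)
Your proposal is correct and follows exactly the route the paper indicates: the paper's own proof consists of the single sentence ``This is proved the same way as Lemma \ref{lemma_F_mitte}, using Lemma \ref{lemma_FinT},'' and you have carried out precisely that transfer, supplying more detail than the paper does. The one cosmetic point is that your lower bound $D_2\ge 2\pi/d+4^{d+1}\pi\theta_2+1$ for the vertical side is slightly too tight to guarantee a full interval $[2n\pi/d,2(n+1)\pi/d)$ inside $[y_1+4^{d+1}\pi\theta_2,y_2]$ when $d$ is small; replacing $2\pi/d$ by $4\pi/d$ (or simply observing that the $t_{n,k}^j$ are spaced by $2\pi/d+o(1)$ once $\nu$ is large) fixes this without affecting the argument.
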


\begin{proof}
This is proved the same way as Lemma \ref{lemma_F_mitte}, using Lemma \ref{lemma_FinT}.
\end{proof}


\section{The set \texorpdfstring{$q(\cF(f))$}{q(F(f))}: conclusions}   \label{sec_q(F)_all}
In this section, we combine the results of Sections \ref{sec_q(F)1}-\ref{sec_q(F)3} to show that $q(\cF_j)$ has positive density in large bounded subsets of $\dC$.

\begin{lemma}   \label{lemma_dens_q(F)}
There are $D,\eta_3>0$ such that for all $j\in\{1,...,d\}$ and any square $\cR\subset\dC$ with sides parallel to the real and imaginary axis and side lengths at least $D$, we have
$$\dens(q(\cF_j),\cR)\ge\eta_3.$$
\end{lemma}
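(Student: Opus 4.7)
The plan is to combine Lemmas \ref{lemma_F_rechts}, \ref{lemma_F_mitte}, and \ref{lemma_F_links}, which bound $\dens(q(\cF_j),\cR)$ from below whenever $\cR$ is a large axis-aligned rectangle contained in one of the three ``good'' regions
$$\cH(\lambda,1/|c_j|,\nu),\quad \cH(\lambda-1,\alpha_1/|c_j|,\nu)\setminus \cH(\lambda,\beta_1/|c_j|,\nu),\quad \cG_\nu\setminus \cH(\lambda-1,\beta_2/|c_j|,\nu).$$
Together with two ``transition'' strips whose horizontal widths are bounded uniformly by some $W>0$ via Lemma \ref{lemma_gamma} (iv), these three regions tile $\cG_\nu$. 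Moreover, by Lemma \ref{lemma_gamma} (iii), choosing $\nu$ large forces each of the four bounding curves $\Gamma(\lambda,1/|c_j|)$, $\Gamma(\lambda,\beta_1/|c_j|)$, $\Gamma(\lambda-1,\alpha_1/|c_j|)$, $\Gamma(\lambda-1,\beta_2/|c_j|)$ to have slope at most some small $\epsi>0$ throughout $\cG_\nu$, uniformly in $j$.

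First I would fix $\nu$ and $\epsi$ as above, and then take $D$ much larger than $\nu$, $W$, and $\max(D_0,D_1,D_2)$. Given a square $\cR$ of side at least $D$, the condition $D\gg\nu$ guarantees that $\cR\cap\cG_\nu$ contains an axis-aligned rectangle $\cR_1$ whose horizontal side equals that of $\cR$ and whose vertical side $h$ satisfies $h\geq (D-2\nu)/2\geq D/4$, so $\meas(\cR_1)$ is a constant fraction of $\meas(\cR)$. Since at each height the two transition strips together occupy at most $W$ of the horizontal side, the three good regions together cover at least $(D-W)h$ of $\cR_1$, and so at least one good region $\cA$ satisfies $\meas(\cA\cap\cR_1)\geq (D-W)h/3$, again a constant fraction of $\meas(\cR)$.

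The heart of the argument is to extract from $\cA\cap\cR_1$ an honest rectangle $\cR_2\subset\cA$ whose area is still comparable to $\meas(\cR)$ and whose sides both exceed $\max(D_0,D_1,D_2)$. If, for instance, $\cA=\cH(\lambda,1/|c_j|,\nu)$ and $\cR_1=[a,a+D]\times[b,b+h]$, I would take
$$\cR_2:=[\gamma^*,a+D]\times[b,b+h],\qquad \gamma^*:=\max_{y\in[b,b+h]}\gamma_{\lambda,1/|c_j|}(y).$$
The slope bound gives $\gamma^*-\min_{y\in[b,b+h]}\gamma_{\lambda,1/|c_j|}(y)\leq \epsi h$, so $\meas(\cR_2)\geq\meas(\cA\cap\cR_1)-\epsi h^2$, which remains a constant fraction of $\meas(\cR)$ once $\epsi$ is small; both sides of $\cR_2$ then also exceed $\max(D_0,D_1,D_2)$ for $D$ large. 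The other two cases are analogous, using $\min_{y}\gamma_{\lambda-1,\beta_2/|c_j|}(y)$ as the right edge of $\cR_2$ for the left region, and both extrema for the middle region; the total area lost is at most $2\epsi h^2$, still negligible.

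Finally, the appropriate one of Lemmas \ref{lemma_F_rechts}, \ref{lemma_F_mitte}, and \ref{lemma_F_links} yields $\dens(q(\cF_j),\cR_2)\geq\min(\eta_0,\eta_1,\eta_2)=:\eta^*$, and therefore $\dens(q(\cF_j),\cR)\geq\eta^*\cdot\meas(\cR_2)/\meas(\cR)\geq\eta_3$ with $\eta_3>0$ independent of $\cR$ and $j$. I expect the main obstacle to be this third step: each bounding curve can drift horizontally by up to $\epsi h\lesssim \epsi D$ over the vertical extent of $\cR_1$, which grows linearly with $D$, so it is essential to fix $\nu$ (and hence $\epsi$) before $D$ is chosen, to keep this drift small relative to $\meas(\cA\cap\cR_1)$.
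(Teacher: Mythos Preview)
Your outline follows the paper's strategy exactly: pass to a large rectangle $\cR_1\subset\cG_\nu$, use Lemma~\ref{lemma_gamma}(iii)--(iv) to control the two transition strips, locate a large axis-aligned sub-rectangle inside one of the three good regions, and then invoke the matching one of Lemmas~\ref{lemma_F_rechts}, \ref{lemma_F_mitte}, \ref{lemma_F_links}. Two remarks. First, a small slip: writing $L\ge D$ for the side of $\cR$, you need $h\ge(L-2\nu)/2$ rather than merely $h\ge(D-2\nu)/2$ in order to conclude that $\meas(\cR_1)$---and later $\meas(\cR_2)$---is a fixed fraction of $\meas(\cR)=L^2$; as written your ratios degenerate when $L\gg D$ (alternatively, first tile down to squares with $L\in[D,2D]$, after which your bounds are fine). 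Second, the paper replaces your area-pigeonhole-plus-extraction step by a cleaner device: it cuts $\cR_1$ into five equal vertical strips and checks, from the slope bound $|\gamma'|<\tfrac{1}{10}$ and the width bound of Lemma~\ref{lemma_gamma}(iv), that each transition set $\cE_l$ has horizontal extent less than $\tfrac15\diam_x(\cR)$ inside $\cR$, hence meets at most two of the strips. Thus at least one of the five strips already lies entirely in a single good region and serves directly as your $\cR_2$, bypassing the extraction argument altogether.
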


\begin{proof}
Let 
$$\cE_1:=\cH\left(\lambda,\frac{\beta_1}{|c_j|},\nu\right)\setminus \cH\left(\lambda,\frac{1}{|c_j|},\nu\right)$$
and
$$\cE_2:=\cH\left(\lambda-1,\frac{\beta_2}{|c_j|},\nu\right)\setminus \cH\left(\lambda-1,\frac{\alpha_1}{|c_j|},\nu\right).$$
Also, let $\gamma_1$ and $\gamma_2$ be the left boundary curves of $\cE_1$ and $\cE_2$, respectively,  parametrised by $y=\im z$. 
Justified by Lemma \ref{lemma_gamma}, we suppose that $\nu$ is so large that 
\begin{equation}  \label{eq_nu}
|\gamma_k'(y)|<\frac{1}{10} \text{ for } |y|\ge\nu \text{ and } k\in\{1,2\}.
\end{equation}
Using the notation of Lemmas \ref{lemma_F_rechts}, \ref{lemma_F_mitte} and \ref{lemma_F_links}, suppose that
\begin{equation}  \label{eq_D-nu-Dk}
D>2\nu+5\max\{D_0,D_1,D_2\}
\end{equation}
and
\begin{equation} \label{eq_D-alpha-beta}
D>20\max\left\{\log\beta_1, \,\log\frac{\beta_2}{\alpha_1}\right\}.
\end{equation}
For $\cS\subset\dC$, let
$$\diam_x(\cS):=\sup\{|\re(z-w)|:\,z,w\in \cS\}$$
and
$$\diam_y(\cS):=\sup\{|\im(z-w)|:\,z,w\in \cS\}.$$
Define
$$\cR_+:=\cR\cap\{z:\,\im z\ge\nu\}, \quad \cR_-:=\cR\cap\{z:\,\im z\le-\nu\},$$
and let 
$$\cR_1:=\begin{cases}\cR_+ &\text{if }\diam_y(\cR_+)\ge\diam_y(\cR_-)\\ \cR_- &\text{otherwise.}\end{cases}$$
By \eqref{eq_D-nu-Dk}, $\diam_y(\cR_1)>\max\{D_0,D_1,D_2\}.$

We now divide $\cR_1$ into $5$ rectangles, $\cR_{1,1},...,\cR_{1,5}$, with $\diam_y(\cR_{1,k})=\diam_y(\cR_1)$ and $\diam_x(\cR_{1,k})=\frac{1}{5}\diam_x(\cR_1)$ for all $k\in\{1,...,5\}$ (see Figure \ref{fig_R}).

\begin{figure}[ht]
\centering
\begin{tikzpicture}
\draw[thick] (0,0) rectangle (5,3);
\foreach \x in {1,...,4} \draw[dashed] (\x,0) --(\x, 3);
\foreach \x in {1,...,5} \draw (\x-0.5, 1.5) node {$\cR_{1,\x}$};
\draw (1, 2.5) node {$\cR_1$};
\end{tikzpicture}
\caption{The rectangle $\cR_1$, bounded by the solid line, is divided into five smaller rectangles by the dashed lines.}
\label{fig_R}
\end{figure}
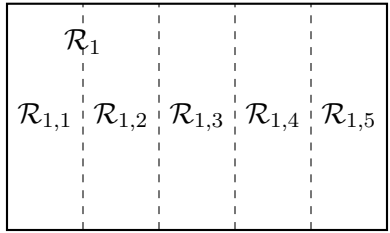

By \eqref{eq_D-nu-Dk}, $\diam_x(\cR_{1,k})>\max\{D_0,D_1,D_2\}.$
By \eqref{eq_nu}, Lemma \ref{lemma_gamma}, \eqref{eq_D-alpha-beta} and the fact that $\cR$ is a square of side length at least $D$, we have
\begin{equation}
\begin{split}
\diam_x (\cE_l\cap \cR)&<\frac{1}{10}\diam_y(\cR)+2\max\left\{\log\beta_1,\log\frac{\beta_2}{\alpha_1}\right\}\\
&<\frac{1}{10}\diam_y(\cR)+\frac{1}{10}D\le\frac{1}{5}\diam_x(\cR)
\end{split}
\end{equation}
for $l\in\{1,2\}$. Thus, $\cE_1$ and $\cE_2$ each intersect at most two of the rectangles $\cR_{1,k}$. Hence, there exists $l\in\{1,...,5\}$ such that $\cR_{1,l}$ does not intersect $\cE_1\cup \cE_2$. This implies
that $\cR_{1,l}$ satisfies the hypothesis of one of Lemmas \ref{lemma_F_rechts}, \ref{lemma_F_mitte} and \ref{lemma_F_links}. Hence, 
$$\dens(q(\cF_j), \cR_{1,l})\ge\min\{\eta_0,\eta_1,\eta_2\}$$
and
\begin{equation}
\begin{split}
\dens(q(\cF_j), \cR)&\ge\dens(q(\cF_j),\cR_{1,l})\cdot\dens(\cR_{1,l},\cR)\\
&\ge\min\{\eta_0,\eta_1,\eta_2\}\cdot\frac{1}{10}\frac{\diam_x(\cR)(\diam_y(\cR)-2\nu)}{(\diam_x\cR)^2}\\
&\ge\min\{\eta_0,\eta_1,\eta_2\}\cdot\frac{1}{10}\left(1-\frac{2\nu}{D}\right). \qedhere
\end{split}
\end{equation}
\end{proof}

The following corollary is an immediate consequence of Lemma \ref{lemma_dens_q(F)}.

\begin{cor}   \label{cor_dens_q(F)}
There are $r_0,\eta>0$ such that for all $z\in\dC$, all $r\ge r_0$ and all $j\in\{1,...,d\}$, we have
$$\dens(q(\cF_j),\cD(z,r))\ge\eta.$$
\end{cor}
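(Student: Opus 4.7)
The plan is to deduce the corollary directly from Lemma~\ref{lemma_dens_q(F)} by inscribing an axis-parallel square in the disk $\cD(z,r)$ and transferring the density estimate. The only choice to be made is the value of $r_0$ that guarantees the inscribed square has side length at least $D$.

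Specifically, given $z\in\dC$ and $r>0$, let $\cR$ be the closed square centred at $z$ with sides parallel to the coordinate axes and side length $r\sqrt{2}$. Then $\cR\subset\cD(z,r)$, and $\meas(\cR)=2r^{2}$ while $\meas(\cD(z,r))=\pi r^{2}$, so
$$\dens(\cR,\cD(z,r))=\frac{2}{\pi}.$$
Now set $r_0:=D/\sqrt{2}$. For any $r\ge r_0$ the side length of $\cR$ satisfies $r\sqrt{2}\ge D$, so Lemma~\ref{lemma_dens_q(F)} applies to $\cR$ and yields
$$\dens(q(\cF_j),\cR)\ge\eta_{3}$$
for every $j\in\{1,\dots,d\}$.

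Combining the two density estimates via the obvious inclusion $q(\cF_j)\cap\cR\subset q(\cF_j)\cap\cD(z,r)$ gives
$$\dens(q(\cF_j),\cD(z,r))\ge\dens(q(\cF_j),\cR)\cdot\dens(\cR,\cD(z,r))\ge\eta_{3}\cdot\frac{2}{\pi}.$$
Hence the corollary holds with $\eta:=2\eta_{3}/\pi$ and $r_0:=D/\sqrt{2}$. There is no real obstacle here, since the previous lemma does all the work; the corollary simply repackages the square statement into a (more convenient) disk statement that will be directly usable when verifying the thinness hypotheses of Theorem~\ref{thm_zero_measure}.
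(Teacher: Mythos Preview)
Your proof is correct and is exactly the kind of argument the paper has in mind: the paper gives no explicit proof at all, simply declaring the corollary ``an immediate consequence of Lemma~\ref{lemma_dens_q(F)}''. Inscribing an axis-parallel square of side $r\sqrt{2}$ in $\cD(z,r)$ and applying the lemma is the natural way to make this immediate consequence precise; the only cosmetic point is that the corners of the closed square lie on $\partial\cD(z,r)$ rather than in the open disk, but this is a set of measure zero and does not affect the density computation.
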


\begin{remark}
Corollary \ref{cor_dens_q(F)} says that $\dC\setminus q(\cF_j)$ is thin at $\infty$.  
\end{remark}


\section{Proof of Theorem \ref{thm_main}}   \label{sec_proof}

\begin{proof}[Proof of Theorem \ref{thm_main}]
We will verify the assumptions of Theorem \ref{thm_zero_measure}. By Lemma \ref{lemma_sing_assumptions}, the set $\cP(f)\cap\cJ(f)$ is finite, so it remains to prove that there exists $R_1>0$ such that $\cJ(f)$ is uniformly thin at $(\cP(f)\cap\dC)\setminus\overline{\cD(0,R_1)}$ and that $\cJ(f)$ is thin at $\infty$. Let $r_1>0$ such that
\begin{enumerate}[(a)]
\item $|q'(z)|\ge(d/2)|z|^{d-1}$ for all $z\in\dC$ with $|z|\ge r_1$;
\item  each $z_0\in \cP(f)$ with $|z_0|\ge r_1$ is a  zero of $g$ and hence a superattracting fixed point of $f$. Justified by Corollary \ref{cor_zeros}, we also assume that there is $j\in\{1,...,d\}$ with $z_0\in\cS_j$, and $\dist(z_0,\partial \cS_j)\ge3$. Moreover, suppose that the conclusion of Lemma \ref{lemma_disk_attr_basin} holds for $|z_0|\ge r_1$.
\end{enumerate}
Let $r_0$ be the constant from Corollary \ref{cor_dens_q(F)}. First, we will show that there exists $\eta_4>0$ such that for all $j\in\{1,...,d\}$, all $z\in \cS_j$ with $|z|\ge r_1$ and all $r>8r_0/(d|z|^{d-1})$ with $\cD(z,2r)\subset \cS_j$, we have
\begin{equation}  \label{eq_dens_first}
\dens(\cF(f), \cD(z,r))\ge\eta_4.
\end{equation}

Recall that $q$ is injective in $\cS_j$. By Koebe's theorems,
$$\cD\left(q(z),\frac{1}{4}|q'(z)|r\right)\subset q(\cD(z,r))\subset \cD(q(z),4|q'(z)|r).$$
By (a) and the assumption on $r$, we have $(1/4)|q'(z)|r\ge r_0$. Hence, by Corollary \ref{cor_dens_q(F)},
$$\dens\left(q(\cF_j),\cD\left(q(z),\frac{1}{4}|q'(z)|r\right)\right)\ge\eta.$$
Thus,
\begin{equation}
\begin{split}
&\dens(q(\cF_j), q(\cD(z,r)))\\
&\ge\dens\left(\cD\left(q(z),\frac{1}{4}|q'(z)|r\right),q(\cD(z,r))\right)\cdot\dens\left(q(\cF_j),\cD\left(q(z),\frac{1}{4}|q'(z)|r\right)\right)\\
&\ge\dens\left(\cD\left(q(z),\frac{1}{4}|q'(z)|r\right),\cD(q(z),4|q'(z)|r)\right)\cdot\eta
=\frac{1}{256}\eta.
\end{split}
\end{equation}
By the Koebe distortion theorem,
$$\dens(\cF(f),\cD(z,r))\ge\left(\frac{\min_{\zeta\in \cD(z,r)}|q'(\zeta)|}{\max_{\zeta\in \cD(z,r)}|q'(\zeta)|}\right)^2\dens(q(\cF_j),q(\cD(z,r)))\ge\frac{1}{3^8\cdot256}\eta.$$
This implies \eqref{eq_dens_first} with $\eta_4=\eta/(3^8\cdot256).$

Let us now prove that there exists $R_1>0$ such that $\cJ(f)$ is uniformly thin at $(\cP(f)\cap\dC)\setminus\overline{\cD(0,R_1)}$. Let $\delta_1\in(0,1)$, $z_0\in\cP(f)$ with $|z_0|> r_1+1$ and $z\in \cD(z_0,\delta_1).$ By (b), $\cD(z,2\delta_1)\subset \cS_j$. Also, $|z|\ge r_1$. If $|z-z_0|\ge8r_0/(d|z|^{d-1})$, then by \eqref{eq_dens_first},
$$\dens(\cF(f),\cD(z,|z-z_0|))\ge\eta_4.$$
Now suppose that 
\begin{equation} \label{eq_z_closeto_z0}
|z-z_0|<\dfrac{8r_0}{d|z|^{d-1}}. 
\end{equation}
By Lemma \ref{lemma_disk_attr_basin},  we have $\cD(z_0,1/(3d|z_0|^{d-1}))\subset \cF(f)$. Hence, 
$$\dens(\cF(f),\cD(z,|z-z_0|))\ge\dens\left(\cD\left(z_0,\dfrac{1}{3d|z_0|^{d-1}}\right),\cD(z,|z-z_0|)\right).$$
The expression on the right hand side is bounded below independent of $z_0$ and $|z|$, provided \eqref{eq_z_closeto_z0} is satisfied. So $\cJ(f)$ is uniformly thin at $(\cP(f)\cap\dC)\setminus\overline{\cD(0,r_1+1)}$.

It  remains to prove that $\cJ(f)$ is thin at $\infty$. Let $R$ be as in Section \ref{sec_changeOfVar} and let $r_2>\max\{2R^{1/d}, r_1\}$. If $r_2$ is sufficiently large, then Lemma \ref{lemma_change_of_var} yields that $\bigcup_{j=1}^d\partial\cS_j\setminus\cD(0,r_2)$ is contained in $d$ pairwise disjoint halfstrips, $\cT_1,...,\cT_d$, of width $1$. We can assume that $r_2$ is so large that $\dist(\cT_k,\cT_l)\ge 1$ for $k\ne l$. Then for $|z|\ge r_2+3$, the set $\cD(z,3)\setminus\bigcup_{j=1}^d\cT_j$ contains a disk, $\cD$, of radius $1/2$. There is $j\in\{1,...,d\}$ with $\cD\subset\cS_j$. Let $\cD'$ be the disk with the same center as $\cD$ and radius $1/4$. If $r_2$ is sufficiently large, then by \eqref{eq_dens_first}, we have $\dens(\cF(f),\cD')\ge\eta_4$, and hence
$$\dens(\cF(f),\cD(z,3))\ge\dens(\cF(f),\cD')\cdot\dens(\cD', \cD(z,3))\ge\frac{\eta_4}{144}.$$

We now consider the case that $|z|<r_2+3$. Let $\zeta_1,...,\zeta_n\in\cD(0,r_2+3)$ such that 
$$\cD(0,r_2+3)\subset\bigcup_{k=1}^n\cD(\zeta_k,1).$$
Then
$$\eta_5:=\min_{1\le k\le n}\dens(\cF(f), \cD(\zeta_k,1))>0.$$
For $z\in \cD(0,r_2+3)$, let $k\in\{1,...,n\}$ such that $z\in \cD(\zeta_k,1)$. Then $\cD(\zeta_k,1)\subset \cD(z,3)$ and
$$\dens(\cF(f),\cD(z,3))\ge\frac{1}{9}\dens(\cF(f),\cD(\zeta_k,1))\ge\frac{1}{9}\eta_5.$$
Thus, $\cJ(f)$ is thin at $\infty$. Hence, Theorem \ref{thm_zero_measure} yields that $\cJ(f)$ has Lebesgue measure zero.
\end{proof}

\paragraph{Acknowledgements}
I would like to thank Walter Bergweiler for helpful suggestions and for carefully reading the manuscript.
\bibliographystyle{plain}

\bibliography{Julia_sets_of_zero_measure}

\end{document}